\numberwithin{equation}{section}
\theoremstyle{plain}
\newtheorem{proposition}{Proposition}[section]
\newtheorem{theorem}[proposition]{Theorem}
\newtheorem{lemma}[proposition]{Lemma}
\newtheorem{definition}[proposition]{Definition}
\newtheorem{remark}[proposition]{Remark}
\renewcommand{\section}%
   {\setcounter{equation}{0}\@startsection {section}{1}{\z@}{-3.5ex plus -1ex
  minus -.2ex}{2.3ex plus .2ex}{\Large\bf}}
\def\N{\mathbb N}
\newcommand{\RR}{\mathbb{R}}
\newcommand{\CC}{\mathbb{C}}
\newcommand{\NN}{\mathbb{N}}
\newcommand{\supp}{\operatorname{supp}}
\newcommand{\beqsn}{\arraycolsep1.5pt\begin{eqnarray*}}
\newcommand{\eeqsn}{\end{eqnarray*}\arraycolsep5pt}
\newcommand{\beqs}{\arraycolsep1.5pt\begin{eqnarray}}
\newcommand{\eeqs}{\end{eqnarray}\arraycolsep5pt}
\title{On the inclusion relations of global ultradifferentiable classes defined by weight matrices}
\author[Boiti]{Chiara Boiti}
\address{
Dipartimento di Matematica e Informatica \\Universit\`a di Ferrara\\
Via Ma\-chia\-vel\-li n.~30\\
I-44121 Ferrara\\
Italy}
\email{chiara.boiti@unife.it}
\author[Jornet]{David Jornet}
\address{
Instituto Universitario de Matem\'atica Pura y Aplicada IUMPA\\
Universitat Po\-li\-t\`ecni\-ca de Val\`encia\\
Camino de Vera, s/n\\
E-46022 Valencia\\
Spain}
\email{djornet@mat.upv.es}
\author[Oliaro]{Alessandro Oliaro}
\address{Dipartimento di Matematica\\ Universit\`a di Torino\\
 Via Carlo Alberto n.~10\\ I-10123 Torino\\ Italy}
 \email{alessandro.oliaro@unito.it}
\author[Schindl]{Gerhard Schindl}
\address{Fakult\"at f\"ur Mathematik\\ Universit\"at Wien\\
Oskar-Morgenstern-Platz n.~1\\ A-1090 Wien\\ Austria}
 \email{gerhard.schindl@univie.ac.at}
\begin{document}

\keywords{Gelfand-Shilov classes, weight sequences, weight functions, weight matrices, sequence spaces}
\subjclass[2020]{46A04, 46A45, 26E10}

\begin{abstract}
We study and characterize the inclusion relations of global classes in the general weight matrix framework in terms of growth relations for the defining weight matrices. We consider the Roumieu and Beurling cases, and as a particular case we also treat the classical weight function and weight sequence cases. Moreover, we construct a weight sequence which is oscillating around any weight sequence which satisfies some minimal conditions and, in particular, around the critical weight  sequence $(p!)^{1/2}$, related with the non-triviality  of the classes. Finally, we also obtain comparison results both on classes defined by weight functions that can be defined by weight sequences and conversely.
\end{abstract}

\maketitle


\markboth{\sc  On the inclusion relations of global ultradifferentiable classes\ldots}
 {\sc C.~Boiti, D.~Jornet, A.~Oliaro, G.~Schindl}

\section{Introduction}

We deal with global classes of ultradifferentiable functions defined by weight matrices and study and characterize different inclusion relations between these classes. There are basically two ways to introduce the classes of ultradifferentiable functions, the point of view of Komatsu~\cite{Komatsu73}, based in previous ideas of Carleman, which pays attention to the growth of the derivatives on compact sets modulated by a sequence $(M_p)_p$ of positive numbers, or the point of view of Bj\"orck~\cite{Bjorck66}, based in ideas of Beurling~\cite{Beurling61}, who used a weight function to estimate the growth of the Fourier transform of compactly supported functions. Braun, Meise and Taylor~\cite{BraunMeiseTaylor90} unified these points of view by introducing weight functions which allow the use of convex analysis techniques. In terms of their topological structure, the classes are of two types, of Beurling type,  which are classes whose topology looks like the topology of the space of all smooth functions, and of Roumieu type, whose topology looks like that of the space of real-analytic functions.

 More recently, Rainer and Schindl~\cite{compositionpaper} introduced weight matrices to study when the spaces of ultradifferentiable functions are closed under composition treating at the same time the classes in the sense of Komatsu (estimates of the derivatives with a sequence) and in the sense of Braun, Meise and Taylor (estimates of the derivatives via a weight function). They also studied intersections and inclusion relations of the classes in the local sense (i.e. when the estimates are given over the compact sets of a given open set). Since then several papers using weight matrices have been published. We mention, for instance, \cite{nuclearglobal2,almostanalytic,furdos2022the,mixedramisurj}, and the references therein. However, the characterization of inclusion relations in global classes of ultradifferentiable functions, i.e. classes where the estimates on the derivatives are taken in the whole of $\mathbb{R}^d$, has not been investigated yet. In this paper (Section~\ref{characterizationsection}) we characterize the inclusion relations of global classes defined by weight matrices using the isomorphisms introduced in \cite[Section 5]{nuclearglobal2}. Moreover, given a weight sequence we construct an oscillating weight sequence around the first one to have examples of the opposite situation of the inclusion relations. In particular, in Section~\ref{oscillating} we construct an oscillating weight sequence around the sequence $(p!)^{1/2}$, which is very related to the non-triviality of the corresponding ultradifferentiable class  (see Remark~\ref{oscillatingconsequence}). We begin with some notation (Section~\ref{notation}) and continue in Section~\ref{characterizationsection} with the weight function case and  the more general weighted matrix case. In Section~\ref{comparisonofGSclasses} we compare the classes when defined by weight functions and sequences in the spirit of \cite{BonetMeiseMelikhov07}. In Section~\ref{alternativesection} we give alternative assumptions (non-quasianalyticity), and accordingly different techiques of demonstration, for the inclusion relations. And finally, as an appendix we analyze  the extreme case of the weight function $t\mapsto\log t$ which yields the Schwartz class in our setting.


\section{Notation}\label{notation}
\subsection{Weight sequences}
Let $\mathbf{M}=(M_p)_{p\in\NN_0}$ be a sequence of positive real numbers where we have set $\NN_0:=\NN\cup\{0\}$. A sequence $(M_p)_p$ is called {\em normalized} if $M_1\ge M_0=1$ (without loss of generality).
We say that $\mathbf{M}$ satisfies the {\em logarithmic convexity} condition, i.e.  assumption
$(M1)$
of \cite{Komatsu73}, if
\begin{eqnarray}
\label{M1}
M_p^2\leq M_{p-1}M_{p+1},\qquad p\in\NN.
\end{eqnarray}
This is equivalent to the fact that the sequence of quotients $\mu_p:=\frac{M_p}{M_{p-1}}$, $p\in\NN$, is nondecreasing and we set $\mu_0:=1$. If $\mathbf{M}$ is normalized and log-convex, then
\begin{equation}\label{algebra}
\forall\;p,q\in\NN_0:\;\;\;M_pM_q\le M_{p+q};
\end{equation}
see e.g.  \cite[Lemma 2.0.6]{diploma}. Moreover, in this case, $\mathbf{M}$ is nondecreasing because $\mu_p\ge\mu_1\ge 1$ for all $p\in\NN$.

We say that $\mathbf{M}$ satisfies {\em derivation closedness}, i.e. condition $(M2)'$ of \cite{Komatsu73}, if
\begin{eqnarray}
\label{M2prime}
\exists\;D\ge 1\;\;\;M_{p+1}\le D^{p+1}M_p,\qquad p\in\NN_0,
\end{eqnarray}
and $\mathbf{M}$ satisfies the stronger condition of {\em moderate growth}, i.e. condition $(M2)$
of \cite{Komatsu73}, if
\begin{eqnarray}
\label{M2}
\exists\;C\ge 1\;\;\;M_{p+q}\le C^{p+q}M_pM_q,\qquad p,q\in\NN_0.
\end{eqnarray}

For convenience we set
$${\mathcal{LC}}:=\{\mathbf{M}\in\RR_{>0}^{\NN}:\;\mathbf{M}\;\text{is normalized, log-convex},\;\lim_{p\rightarrow\infty}(M_p)^{1/p}=\infty\},$$
where $\RR_{>0}^{\NN}$ denotes the set of strictly positive sequences.

For a normalized sequence
$\mathbf{M}=(M_p)_p$ the {\em associated function} is denoted by
\begin{eqnarray}
\label{assofunc}
\omega_{\mathbf{M}}(t)=\sup_{p\in \NN_0}\log\frac{|t|^p}{M_p},\qquad t\in\RR,
\end{eqnarray}
with the convention that $0^0:=1$ and $\log0:=-\infty$.
Note that
condition $(M_p)^{1/p}\to+\infty$ is equivalent to $\omega_{\mathbf{M}}(t)<+\infty$ by
\cite[Rem.~1]{nuclearglobal2}.

If $\mathbf{M}\in{\mathcal{LC}}$, then we can compute $\mathbf{M}$ by involving $\omega_{\mathbf{M}}$ as follows, see \cite[Chapitre I, 1.4, 1.8]{mandelbrojtbook} (and also \cite[Prop. 3.2]{Komatsu73}):
\begin{equation}\label{Prop32Komatsu}
M_p=\sup_{t\ge 0}\frac{t^p}{\exp(\omega_{\mathbf{M}}(t))},\;\;\;p\in\NN_0.
\end{equation}

Given two (normalized) sequences $\mathbf{M}$ and $\mathbf{N}$ we write $\mathbf{M}\preceq\mathbf{N}$, if
$$\exists\;C\ge 1\;\forall\;p\in\NN_0:\;\;\;M_p\le C^pN_p.$$
If $\mathbf{M}\preceq\mathbf{N}$ and $\mathbf{N}\preceq\mathbf{M}$, then we write $\mathbf{M}\approx\mathbf{N}$ and say that the sequences $\mathbf{M}$ and $\mathbf{N}$ are equivalent. Moreover we write $\mathbf{M}\vartriangleleft\mathbf{N}$ if
$$\forall\;h>0\;\exists\;C_h\ge 1\;\forall\;p\in\NN_0:\;\;\;M_p\le C_hh^pN_p\Longleftrightarrow\lim_{p\rightarrow\infty}\left(\frac{M_p}{N_p}\right)^{1/p}=0.$$
Next we recall \cite[Lemmas 3.8, 3.10]{Komatsu73} transferring these growth relations to the associated functions: Given $\mathbf{M},\mathbf{N}\in{\mathcal{LC}}$ we have
$$\mathbf{M}\preceq\mathbf{N}\Longleftrightarrow\exists\;A,B\ge 1\;\forall\;t\ge 0:\;\,\;\omega_{\mathbf{N}}(t)\le\omega_{\mathbf{M}}(At)+B,$$
and
$$\mathbf{M}\vartriangleleft\mathbf{N}\Longleftrightarrow\forall\;A>0\;\exists\;B\ge 1\;\forall\;t\ge 0:\;\,\;\omega_{\mathbf{N}}(t)\le\omega_{\mathbf{M}}(At)+B.$$
The implications $\Rightarrow$ are clear by definition, for the converse one uses the fact that the sequences are log-convex and \cite[$(3.2)$]{Komatsu73}.\vspace{6pt}

Similar conditions can be considered for sequences $\mathbf{M}=(M_\alpha)_{\alpha\in\NN_0^d}$ of
positive real numbers for multi-indices $\alpha\in\NN_0^d$ (see \cite{nuclearglobal2} for more details). In particular, normalization is given by $M_0=1$, $\mathbf{M}\preceq\mathbf{N}$ is given by  $M_{\alpha}\le C^{|\alpha|}N_{\alpha}$ for some $C\ge 1$ and all $\alpha\in\NN_0^d$ and similarly $\mathbf{M}\vartriangleleft\mathbf{N}$ by $\lim_{|\alpha|\rightarrow\infty}\left(\frac{M_{\alpha}}{N_{\alpha}}\right)^{1/|\alpha|}=0$.
However, the notion of logarithmic convexity is quite delicate in the anisotropic case, and we refer to \cite{BJOS-cvxminorant} for more details.



\subsection{General weight matrices}

Next we recall from \cite[Sect. 3]{nuclearglobal2} the notion of weight matrices and global ultradifferentiable functions in the matrix weighted setting. Let
\begin{equation}
\label{defcalM}
\begin{split}
\qquad \mathcal{M}:=\{(\mathbf{M}^{(\lambda)})_{\lambda>0}:\ & \mathbf{M}^{(\lambda)}=
(M^{(\lambda)}_\alpha)_{\alpha\in\NN_0^d},\
M^{(\lambda)}_0=1, \\
&\mathbf{M}^{(\lambda)}\leq\mathbf{M}^{(\kappa)}\,\mbox{for all}\,0<\lambda\leq\kappa\},
\end{split}
\end{equation}
where $\mathbf{M}^{(\lambda)}\leq\mathbf{M}^{(\kappa)}$ means that
${M}^{(\lambda)}\leq{M}^{(\kappa)}$ for all $\alpha\in\N_0^d$.
We call $\mathcal{M}$ a {\em weight matrix} and we say that it is {\itshape constant} if $\mathbf{M}^{(\lambda)}\approx\mathbf{M}^{(\kappa)}$ for all $\lambda,\kappa>0$.
In the one-dimensional case, we call $\mathcal{M}$ {\itshape standard log-convex} if $\mathbf{M}^{(\lambda)}\in{\mathcal{LC}}$ for any $\lambda>0$.\vspace{6pt}

Given two weight matrices $\mathcal{M}=\{(\mathbf{M}^{(\lambda)})_{\lambda>0}\}$ and $\mathcal{N}=\{(\mathbf{N}^{(\lambda)})_{\lambda>0}\}$ we define the following three relevant growth conditions based on the weight sequence notation.

We write
\beqsn
\mathcal{M}(\preceq)\mathcal{N}&&\qquad\quad\mbox{if}\qquad
\forall\;\lambda>0\;\exists\;\kappa>0:\;\;\;\mathbf{M}^{(\kappa)}\preceq\mathbf{N}^{(\lambda)},\\
\mathcal{M}\{\preceq\}\mathcal{N}&&\qquad\quad\mbox{if}\qquad
\forall\;\lambda>0\;\exists\;\kappa>0:\;\;\;\mathbf{M}^{(\lambda)}\preceq\mathbf{N}^{(\kappa)},\\
\mathcal{M}\vartriangleleft\mathcal{N}&&\qquad\quad\mbox{if}\qquad
\forall\;\lambda>0\;\forall\;\kappa>0:\;\;\;\mathbf{M}^{(\lambda)}\vartriangleleft\mathbf{N}^{(\kappa)}.\eeqsn

Denoting by $\|\cdot\|_\infty$ the supremum norm, for a normalized weight sequence
$\mathbf{M}$ we consider the following spaces of weighted rapidly decreasing
global ultradifferentiable functions
\begin{align*}
\mathcal{S}_{\{\mathbf{M}\}}(\RR^d):=&\Big\{f\in C^{\infty}(\RR^d):\  \exists C,h>0, \ \
\|f\|_{\infty,\mathbf{M},h}:=\sup_{\alpha,\beta\in\NN_0^d}
\frac{\|x^\alpha\partial^\beta f\|_\infty}{h^{|\alpha+\beta|}M_{\alpha+\beta}}\leq C\Big\},\\
\mathcal{S}_{(\mathbf{M})}(\RR^d):=&\{f\in C^\infty(\RR^d):\  \forall h>0\ \exists C_h>0, \ \
\|f\|_{\infty,\mathbf{M},h}\leq C_h\},
\end{align*}
endowed with the inductive limit topology in the Roumieu setting (which may be thought countable if we take $h\in\NN$) and with the projective limit topology in the Beurling setting (countable for $h^{-1}\in\NN$). Next, the matrix type spaces are defined as follows:
\begin{align*}
\mathcal{S}_{\{\mathcal{M}\}}(\RR^d):=&
\bigcup_{\lambda>0}\mathcal{S}_{\{\mathbf{M}^{(\lambda)}\}}
=\{f\in C^\infty(\RR^d):\  \exists C,h,\lambda>0,\ \
\|f\|_{\infty,\mathbf{M}^{(\lambda)},h}\leq C\},\\
\mathcal{S}_{(\mathcal{M})}(\RR^d):=&
\bigcap_{\lambda>0}\mathcal{S}_{(\mathbf{M}^{(\lambda)})}\\
=&\{f\in C^\infty(\RR^d):\  \forall h,\lambda>0\ \exists C_{\lambda,h}>0,\ \
\|f\|_{\infty,\mathbf{M}^{(\lambda)},h}\leq C_{\lambda,h}\},
\end{align*}
again endowed with the inductive limit topology in the Roumieu setting (which may be thought countable if we take $\lambda,h\in\NN$) and endowed with the projective limit topology in the Beurling setting (countable for $\lambda^{-1},h^{-1}\in\NN$).

We denote by $\mathcal{E}_{\{\mathbf{M}\}}, \mathcal{E}_{(\mathbf{M})},
\mathcal{E}_{\{\mathcal{M}\}}, \mathcal{E}_{(\mathcal{M})}$ the analogous (local) classes
of ultradifferentiable functions replacing $\|x^\alpha\partial^\beta f\|_\infty$ by the supremum of $\partial^\beta f$ on compact sets (and then take the projective limit over compact sets). Moreover
 $\mathcal{D}_{\{\mathbf{M}\}}, \mathcal{D}_{(\mathbf{M})},
\mathcal{D}_{\{\mathcal{M}\}}, \mathcal{D}_{(\mathcal{M})}$ denote the corresponding classes of ultradifferentiable functions with compact support.
We refer to \cite{BonetMeiseMelikhov07} and \cite{compositionpaper} for precise definitions of such classes.

We collect here some conditions, already introduced in \cite{nuclearglobal2} and motivated
by the assumptions in \cite{Langenbruch06}. In the Roumieu case we consider
\begin{equation}
\label{12L2R}
\begin{split}
&\forall\lambda>0\ \exists\;\kappa\geq\lambda, B,C,H>0\ \forall\alpha,\beta\in\NN_0^d: \\
&\alpha^{\alpha/2}M^{(\lambda)}_\beta\leq BC^{|\alpha|}H^{|\alpha+\beta|}
M^{(\kappa)}_{\alpha+\beta},
\end{split}
\end{equation}
\begin{eqnarray}
\label{37LR}
&&\forall\;\lambda>0\;\exists\;\kappa\geq\lambda, A\geq1\ \forall\alpha,\beta\in\NN_0^d:\ \ M^{(\lambda)}_{\alpha}
M^{(\lambda)}_{\beta}\leq A^{|\alpha+\beta|}M^{(\kappa)}_{\alpha+\beta},
\end{eqnarray}
\begin{eqnarray}
\label{M2'R}
\forall\lambda>0\;\exists\kappa\geq\lambda,A\geq1\forall\alpha\in\NN_0^d,1\leq j\leq d:\
M^{(\lambda)}_{\alpha+e_j}\leq A^{|\alpha|+1}M^{(\kappa)}_\alpha,
\end{eqnarray}
\begin{eqnarray}
\label{M2R}
\forall\lambda>0\exists\kappa\geq\lambda,A\geq1\forall\alpha,\beta\in\NN_0^d:\
M^{(\lambda)}_{\alpha+\beta}\leq A^{|\alpha+\beta|}
M^{(\kappa)}_{\alpha} M^{(\kappa)}_{\beta},
\end{eqnarray}
and in the Beurling case
\begin{equation}
\label{12L2B}
\begin{split}
&\forall\;\lambda>0\ \exists\;0<\kappa\leq\lambda, H>0\ \forall C>0\ \exists B>0\  \forall\alpha,\beta\in\NN_0^d:\\
&\alpha^{\alpha/2}M^{(\kappa)}_\beta\leq BC^{|\alpha|}H^{|\alpha+\beta|}
M^{(\lambda)}_{\alpha+\beta},
\end{split}
\end{equation}
\begin{eqnarray}
\label{37LB}
\forall\;\lambda>0\;\exists\;0<\kappa\leq\lambda, A\geq1\ \forall\alpha,\beta\in\NN_0^d:\ \  M^{(\kappa)}_{\alpha}
M^{(\kappa)}_{\beta}\leq A^{|\alpha+\beta|}M^{(\lambda)}_{\alpha+\beta},
\end{eqnarray}
\begin{eqnarray}
\label{M2'B}
&&\forall\lambda>0\;\exists0<\kappa\leq\lambda,A\geq1\forall\alpha\in\NN_0^d,1\leq j\leq d:\
M^{(\kappa)}_{\alpha+e_j}\leq A^{|\alpha|+1}M^{(\lambda)}_\alpha,
\end{eqnarray}
\begin{eqnarray}
\label{M2B}
&&\forall\lambda>0\exists0<\kappa\leq\lambda,A\geq1\forall\alpha,\beta\in\NN_0^d:\
M^{(\kappa)}_{\alpha+\beta}\leq A^{|\alpha+\beta|}M^{(\lambda)}_{\alpha}
M^{(\lambda)}_{\beta}.
\end{eqnarray}

We summarize now some consequences for a given weight matrix $\mathcal{M}$ as defined in \eqref{defcalM}:

\begin{itemize}
\item[$(i)$] By \cite[Proposition 1]{nuclearglobal2} the spaces $\mathcal{S}_{\{\mathcal{M}\}}(\RR^d)$ resp. $\mathcal{S}_{(\mathcal{M})}(\RR^d)$ can be equivalently defined in terms of the system of (weighted) $L^2$-seminorms when assuming \eqref{12L2R} and \eqref{M2'R} in the Roumieu case, resp. \eqref{12L2B} and \eqref{M2'B} in the Beurling case.

\item[$(ii)$] If $\mathcal{M}$ satisfies \eqref{37LR} and \eqref{M2R}, then we can replace in the definition of $\mathcal{S}_{\{\mathcal{M}\}}(\RR^d)$ the seminorm $\|\cdot\|_{\infty,\mathbf{M}^{(\lambda)},h}$ by
\begin{equation}\label{separatedgrowth}
\sup_{\alpha,\beta\in\NN_0^d}
\frac{\|x^\alpha\partial^\beta f\|_\infty}{h^{|\alpha+\beta|}M^{(\lambda)}_{\alpha}M^{(\lambda)}_{\beta}}.
\end{equation}
We have an analogous statement for the class $\mathcal{S}_{(\mathcal{M})}(\RR^d)$ when assuming \eqref{37LB} and \eqref{M2B}. When we define the spaces $\mathcal{S}_{\{\mathcal{M}\}}(\RR^d)$ or $\mathcal{S}_{(\mathcal{M})}(\RR^d)$ with the weighted $L^2$ norms, the similar property holds.
\end{itemize}

From now on we make use of the following conventions: when the spaces are defined in terms of $\|\cdot\|_{\infty,\mathbf{M}^{(\lambda)},h}$ we will always write ``joint growth at infinity''; when they are defined in terms of the seminorms from \eqref{separatedgrowth}, then we will write ``separated growth at infinity''; frequently we omit writing the set $\RR^d$ when the context is clear.
%


\subsection{Weight functions}
\label{weightfunctions}

 \begin{definition}
\label{defomega}
A {\em weight function} is a continuous increasing function
$\omega\!:[0,+\infty)\rightarrow[0,+\infty)$ such that
\begin{itemize}
\item[$(\alpha)$]
$\exists L\geq1\ \forall t\geq 0:\ \omega(2t)\leq L(\omega(t)+1)$;
\item[$(\beta)$]
$\omega(t)=O(t^2)$ as $t\rightarrow+\infty$;
\item[$(\gamma)$]
$\log t=o(\omega(t))$ as $t\rightarrow+\infty$;
\item[$(\delta)$]
$\varphi_\omega(t):=\omega(e^t)$ is convex on $[0,+\infty)$.
\end{itemize}
Then we define $\omega(t):=\omega(|t|)$ if $t\in\RR^d$.

We call $\omega$ a {\em general weight function,} if $\omega$ satisfies all listed properties except $(\beta)$.
\end{definition}

\vspace{6pt}

It is not restrictive to assume $\omega|_{[0,1]}\equiv0$ (normalization). As usual, we define the Young conjugate $\varphi^*_{\omega}$ of $\varphi_{\omega}$ by
\begin{eqnarray*}
\varphi^*_{\omega}(s):=\sup_{t\geq0}\{ts-\varphi_{\omega}(t)\},\;\;\;s\ge 0,
\end{eqnarray*}
which is an increasing convex function such that $\varphi^{**}_{\omega}=\varphi_{\omega}$ and $s\mapsto\frac{\varphi^{*}_{\omega}(s)}{s}$ is increasing. Condition $(\gamma)$ guarantees that $\varphi^*_{\omega}$ is finite (see Appendix \ref{logweight}).


We introduce the following growth relations between two (general) weight functions arising naturally in the ultradifferentiable framework: We write
$$\omega\preceq\sigma\quad\mbox{if}\quad\sigma(t)=O(\omega(t)),\;\;\;t\rightarrow\infty,$$
 and
$$\omega\vartriangleleft\sigma\quad\mbox{if}\quad\sigma(t)=o(\omega(t)),\;\;\;t\rightarrow\infty.$$
If $\omega\preceq\sigma$ and $\sigma\preceq\omega$ are valid, then we write $\omega\sim\sigma$ and call the weights {\itshape equivalent.}

For any given (general) weight function we set
\begin{eqnarray}
\label{Wlambda}
W^{(\lambda)}_\alpha:=e^{\frac{1}{\lambda}\varphi^*_{\omega}(\lambda |\alpha|)},
\qquad\lambda>0,\alpha\in\NN_0^d,
\end{eqnarray}
and consider the weight matrix
\begin{eqnarray}
\label{calMomega}
\mathcal{M}_{\omega}:=(\mathbf{W}^{(\lambda)})_{\lambda>0}=(W^{(\lambda)}_\alpha)_{\lambda>0,\,\alpha\in\NN_0^d}.
\end{eqnarray}

We recall now the following result, which was proved in \cite[Lemma 11]{nuclearglobal2} for a weight function, but which can be stated for a {\em general weight function}, since assumption $(\beta)$
was not needed in the proof.


\begin{lemma}\label{Lemma61}
Let $\omega$ be a (general) weight function. Then
$\mathcal{M}_{\omega}$ satisfies the following properties:
\begin{enumerate}
\item[(i)]
$W^{(\lambda)}_0=1,\quad\lambda>0$;
\item[(ii)]
$(W^{(\lambda)}_\alpha)^2\leq W^{(\lambda)}_{\alpha-e_i}W^{(\lambda)}_{\alpha+e_i},\quad
\lambda>0,\alpha\in\NN^d_0$ with $\alpha_i\neq 0$, and $i=1,\dots,d$;
\item[(iii)]
$\mathbf{W}^{(\kappa)}\leq\mathbf{W}^{(\lambda)},\quad 0<\kappa\leq\lambda$;
\item[(iv)]
$W^{(\lambda)}_{\alpha+\beta}\leq W^{(2\lambda)}_\alpha W^{(2\lambda)}_\beta,\quad\lambda>0,\alpha,\beta\in\NN_0^d$;
\item[(v)]
$\forall h>0\ \exists A\geq1\ \forall\lambda>0\ \exists D\geq1\ \forall \alpha\in\NN_0^d:\ \
\ h^{|\alpha|}W^{(\lambda)}_\alpha\leq DW^{(A\lambda)}_\alpha;$
\item[(vi)] Both conditions \eqref{M2'R} and \eqref{M2'B} are valid.
\item[(vii)]
Conditions \eqref{37LR} and \eqref{37LB} are satisfied for $\kappa=\lambda$ and
$A=1$.
\end{enumerate}
\end{lemma}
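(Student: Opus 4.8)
The statement is Lemma~\ref{Lemma61}, which collects several structural properties of the weight matrix $\mathcal{M}_\omega=(\mathbf{W}^{(\lambda)})_{\lambda>0}$ with $W^{(\lambda)}_\alpha=e^{\frac{1}{\lambda}\varphi^*_\omega(\lambda|\alpha|)}$. The plan is to verify each item directly from the definition of the Young conjugate $\varphi^*_\omega$ and the standard elementary facts about it recalled in Section~\ref{weightfunctions}: namely that $\varphi^*_\omega$ is increasing and convex, that $\varphi^*_\omega(0)=0$ (after normalization $\omega|_{[0,1]}\equiv 0$, so $\varphi_\omega(0)=0$ and hence $\varphi^*_\omega(0)=\sup_{t\ge 0}(-\varphi_\omega(t))=0$), and that $s\mapsto\varphi^*_\omega(s)/s$ is increasing. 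Throughout, since $W^{(\lambda)}_\alpha$ depends on $\alpha$ only through $|\alpha|$, all multi-index statements reduce to one-variable inequalities in the nonnegative integer $n=|\alpha|$.

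\textbf{Items (i)--(iv).} For (i), $W^{(\lambda)}_0=e^{\frac1\lambda\varphi^*_\omega(0)}=e^0=1$. For (ii), with $\alpha_i\neq 0$ write $n=|\alpha|$, so the claim is $\bigl(\tfrac1\lambda\varphi^*_\omega(\lambda n)\bigr)\cdot 2\le \tfrac1\lambda\varphi^*_\omega(\lambda(n-1))+\tfrac1\lambda\varphi^*_\omega(\lambda(n+1))$ after taking logarithms; this is exactly the statement that $n\mapsto\varphi^*_\omega(\lambda n)$ is convex, which follows from convexity of $\varphi^*_\omega$. For (iii), $0<\kappa\le\lambda$: taking logarithms we must show $\tfrac1\kappa\varphi^*_\omega(\kappa n)\le\tfrac1\lambda\varphi^*_\omega(\lambda n)$ for all $n$, and with $s=\kappa n$ this is $\tfrac{\varphi^*_\omega(s)}{s}\cdot n\le\tfrac{\varphi^*_\omega((\lambda/\kappa)s)}{(\lambda/\kappa)s}\cdot n$, i.e.\ monotonicity of $s\mapsto\varphi^*_\omega(s)/s$ applied with $s\le(\lambda/\kappa)s$. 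For (iv), take logarithms: the claim becomes $\tfrac1\lambda\varphi^*_\omega(\lambda(m+n))\le\tfrac1{2\lambda}\varphi^*_\omega(2\lambda m)+\tfrac1{2\lambda}\varphi^*_\omega(2\lambda n)$ with $m=|\alpha|$, $n=|\beta|$; since $m+n\le 2\max(m,n)$ and $\varphi^*_\omega$ is increasing, $\varphi^*_\omega(\lambda(m+n))\le\varphi^*_\omega(2\lambda\max(m,n))\le\varphi^*_\omega(2\lambda m)+\varphi^*_\omega(2\lambda n)$ (the last step using $\varphi^*_\omega\ge 0$ to drop the smaller term), which gives the inequality after dividing by $2\lambda$.

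\textbf{Item (v).} This is the one that uses condition $(\alpha)$ on $\omega$ in an essential way, and I expect it to be the main obstacle; it is the standard ``absorption of a geometric factor'' lemma for weight functions. The idea is that multiplying $W^{(\lambda)}_\alpha$ by $h^{|\alpha|}$ corresponds, after rescaling $\omega\mapsto$ a dilate, to replacing $\lambda$ by $A\lambda$ for a suitable $A=A(h)$. Concretely, $(\alpha)$ iterated gives $\omega(Ct)\le A_1(\omega(t)+1)$ for constants depending on $C$, which at the level of the Young conjugate translates (via the defining sup and convexity) into an inequality of the form $\tfrac1{A\lambda}\varphi^*_\omega(A\lambda n)\ge \tfrac1\lambda\varphi^*_\omega(\lambda n) + n\log h - \text{const}$ for $A$ large depending only on $h$; exponentiating yields $h^{|\alpha|}W^{(\lambda)}_\alpha\le D\,W^{(A\lambda)}_\alpha$. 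The quantifier pattern $\forall h\,\exists A\,\forall\lambda\,\exists D$ matches the uniformity in this translation: $A$ comes from $(\alpha)$ and depends only on $h$, while $D$ absorbs the additive constant and is allowed to depend on $\lambda$. I would cite the analogous computation in \cite{nuclearglobal2} (this is essentially \cite[Lemma 11]{nuclearglobal2}) rather than reproduce it, noting as the authors do that $(\beta)$ is never used.

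\textbf{Items (vi)--(vii).} Item (vi) asks for \eqref{M2'R} and \eqref{M2'B}. By (iii) the matrix is increasing/decreasing in $\lambda$ appropriately, and by the derivation-closedness-type estimate it suffices to bound $W^{(\lambda)}_{\alpha+e_j}$ by $A^{|\alpha|+1}W^{(\kappa)}_\alpha$: since $W^{(\lambda)}_{\alpha+e_j}$ depends only on $|\alpha|+1$, this follows by combining (iv) (to split off one index) with (v) (to absorb the resulting constant and the shift in $\lambda$), taking $\kappa=\lambda$ in one case and $\kappa=A\lambda$-type adjustments in the other; alternatively one checks directly that $\tfrac1\lambda\varphi^*_\omega(\lambda(n+1))\le\tfrac1\lambda\varphi^*_\omega(\lambda n)+C n + C$ using convexity and the bound on $\varphi^*_\omega(s)/s$. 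Item (vii) claims \eqref{37LR} and \eqref{37LB} hold with $\kappa=\lambda$, $A=1$; taking logarithms this is $\tfrac1\lambda\varphi^*_\omega(\lambda m)+\tfrac1\lambda\varphi^*_\omega(\lambda n)\le\tfrac1\lambda\varphi^*_\omega(\lambda(m+n))$, i.e.\ the superadditivity $\varphi^*_\omega(s)+\varphi^*_\omega(t)\le\varphi^*_\omega(s+t)$, which is immediate from convexity of $\varphi^*_\omega$ together with $\varphi^*_\omega(0)=0$ (a convex function vanishing at $0$ is superadditive on $[0,\infty)$). This completes all seven items.
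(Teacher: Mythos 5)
The paper itself gives no proof of this lemma — it simply cites \cite[Lemma 11]{nuclearglobal2} and observes that condition $(\beta)$ is not used there — so supplying a direct verification from the properties of $\varphi^*_\omega$ is a perfectly reasonable thing to do, and items (i), (ii), (iii), (v), (vi), (vii) of your write-up are correct: (i) from $\varphi^*_\omega(0)=0$, (ii) from midpoint convexity, (iii) from monotonicity of $s\mapsto\varphi^*_\omega(s)/s$, (v) by the standard iteration of $(\alpha)$ as in the cited lemma, (vi) from (iv) by splitting off one index, and (vii) from superadditivity of a convex function vanishing at $0$.

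There is, however, a genuine error in your argument for (iv). After taking logarithms the claim is
\[
\frac{1}{\lambda}\varphi^*_\omega\bigl(\lambda(m+n)\bigr)\;\le\;\frac{1}{2\lambda}\varphi^*_\omega(2\lambda m)+\frac{1}{2\lambda}\varphi^*_\omega(2\lambda n),
\]
i.e.\ $2\varphi^*_\omega(\lambda(m+n))\le\varphi^*_\omega(2\lambda m)+\varphi^*_\omega(2\lambda n)$. Your chain $\varphi^*_\omega(\lambda(m+n))\le\varphi^*_\omega(2\lambda\max(m,n))\le\varphi^*_\omega(2\lambda m)+\varphi^*_\omega(2\lambda n)$ bounds the left-hand side only once, not twice; dividing it by $2\lambda$ yields $\frac{1}{2\lambda}\varphi^*_\omega(\lambda(m+n))$ on the left, which after exponentiating proves only $\bigl(W^{(\lambda)}_{\alpha+\beta}\bigr)^{1/2}\le W^{(2\lambda)}_\alpha W^{(2\lambda)}_\beta$. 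Indeed, for $n=0$ the required inequality reads $2\varphi^*_\omega(\lambda m)\le\varphi^*_\omega(2\lambda m)$, which your max-argument cannot deliver. The correct one-line proof is the same midpoint-convexity you already used in (ii): since $\lambda(m+n)=\tfrac12(2\lambda m)+\tfrac12(2\lambda n)$, convexity gives $\varphi^*_\omega(\lambda(m+n))\le\tfrac12\varphi^*_\omega(2\lambda m)+\tfrac12\varphi^*_\omega(2\lambda n)$, which is exactly the desired estimate. With that replacement (and noting that (vi) relies on (iv)), the verification is complete.
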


The spaces of rapidly decreasing $\omega$-ultradifferentiable
functions are then defined as follows: In the Roumieu case
\begin{align*}
\mathcal{S}_{\{\omega\}}(\RR^d)&:=\big\{f\in C^{\infty}(\RR^d):\ \exists\lambda>0,C>0\ \mbox{s.t.}\sup_{\alpha,\beta\in\NN_0^d}\|x^{\alpha}\partial^{\beta} f\|_{\infty}
e^{-\frac{1}{\lambda}\varphi^*_\omega(\lambda |\alpha+\beta|)}\leq C\big\}
\\&
=\big\{f\in C^\infty(\RR^d):\ \exists\lambda>0,C>0\ \mbox{s.t.}\|f\|_{\infty,\mathbf{W}^{(\lambda)}}:=
\sup_{\alpha,\beta\in\NN_0^d}\frac{\|x^\alpha\partial^\beta f\|_\infty}{W^{(\lambda)}_{\alpha+\beta}}\leq C\big\},
\end{align*}
and in the Beurling case
$$\mathcal{S}_{(\omega)}(\RR^d):=\big\{f\in C^{\infty}(\RR^d):\ \forall\lambda>0\,\exists C_{\lambda}>0:\
\|f\|_{\infty,\mathbf{W}^{(\lambda)}}\leq C_\lambda\big\}.$$

From $(iv)$, $(vii)$ in Lemma \ref{Lemma61} we have that equivalently the classes can be treated by separated growth at infinity, i.e.
\begin{eqnarray*}
\mathcal{S}_{\{\omega\}}(\RR^d)=\big\{f\in C^{\infty}(\RR^d):\ \exists\lambda>0,C>0:\
\sup_{\alpha,\beta\in\NN_0^d}
\frac{\|x^{\alpha}\partial^{\beta}f\|_\infty}{W^{(\lambda)}_{\alpha}
W^{(\lambda)}_{\beta}}\leq C\big\}
\end{eqnarray*}
and
\begin{eqnarray*}
\mathcal{S}_{(\omega)}(\RR^d)=\big\{f\in C^{\infty}(\RR^d):\ \forall\lambda>0\,\exists C_\lambda>0:\
\sup_{\alpha,\beta\in\NN_0^d}\frac{\|x^{\alpha}\partial^{\beta}f\|_\infty}{W^{(\lambda)}_{\alpha}
W^{(\lambda)}_{\beta}}\leq C_\lambda\big\}.
\end{eqnarray*}

We can also insert $h^{|\alpha+\beta|}$ at the denominator (for some $h>0$ in the
Roumieu case and for all $h>0$ in the Beurling case) by $(v)$ in Lemma \ref{Lemma61}. In particular, we finally recall from \cite[Proposition 5]{nuclearglobal2} (where again assumption $(\beta)$ was not necessary) that, analogously as in the ultradifferentiable setting, we can use the associated weight function in order to have an alternative and useful description of the classes defined by weight matrices:

\begin{proposition}\label{Proposition61}
Let $\omega$ be a (general) weight function and $\mathcal{M}_{\omega}$ be the weight matrix defined in  \eqref{Wlambda}, \eqref{calMomega}. We have
$$\mathcal{S}_{\{\mathcal{M}_{\omega}\}}(\RR^d)=\mathcal{S}_{\{\omega\}}(\RR^d)\hspace{20pt}\text{and}\hspace{20pt} \mathcal{S}_{(\mathcal{M}_{\omega})}(\RR^d)=\mathcal{S}_{(\omega)}(\RR^d),$$
and both equalities are also topological.
\end{proposition}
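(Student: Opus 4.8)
The plan is to prove the two equalities (Roumieu and Beurling) by unwinding the definitions of both sides and showing that the defining seminorm systems are cofinal in one another; the topological identification will then follow from the open mapping / closed graph theorem for the relevant (DF)- and Fr\'echet-type spaces, or more directly from the cofinality of the seminorm systems.

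First I would recall the definition of $\mathcal{S}_{\{\mathcal{M}_\omega\}}$ as $\bigcup_{\lambda>0}\mathcal{S}_{\{\mathbf{W}^{(\lambda)}\}}$, i.e. $f$ belongs to it iff there exist $C,h,\lambda>0$ with $\sup_{\alpha,\beta}\frac{\|x^\alpha\partial^\beta f\|_\infty}{h^{|\alpha+\beta|}W^{(\lambda)}_{\alpha+\beta}}\le C$, while $f\in\mathcal{S}_{\{\omega\}}$ iff there exist $\lambda,C>0$ with $\sup_{\alpha,\beta}\frac{\|x^\alpha\partial^\beta f\|_\infty}{W^{(\lambda)}_{\alpha+\beta}}\le C$. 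The inclusion $\mathcal{S}_{\{\omega\}}\subseteq\mathcal{S}_{\{\mathcal{M}_\omega\}}$ is immediate (take $h=1$). For the reverse inclusion, given $f$ with a bound at level $(\lambda,h)$, I use Lemma~\ref{Lemma61}$(v)$: for the fixed $h$ there is $A\ge 1$ such that $h^{|\alpha|}W^{(\lambda)}_\alpha\le D\,W^{(A\lambda)}_\alpha$ for a suitable $D\ge 1$; applying this with $\alpha$ replaced by $\alpha+\beta$ and $|\alpha+\beta|$ in the exponent gives $h^{|\alpha+\beta|}W^{(\lambda)}_{\alpha+\beta}\le D\,W^{(A\lambda)}_{\alpha+\beta}$, hence $f\in\mathcal{S}_{\{\mathbf{W}^{(A\lambda)}\}}\subseteq\mathcal{S}_{\{\omega\}}$. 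The Beurling case is dual: $f\in\mathcal{S}_{(\mathcal{M}_\omega)}$ means for all $\lambda,h>0$ there is $C_{\lambda,h}$ with the $(\lambda,h)$-bound, and $f\in\mathcal{S}_{(\omega)}$ means for all $\lambda>0$ there is $C_\lambda$ with the $(\lambda,1)$-bound; here one runs Lemma~\ref{Lemma61}$(v)$ in the form: given target level $\lambda$, choose $h$ below the threshold produced by applying $(v)$ to the pair $(\lambda/A,\ldots)$, so that a bound at some finer level $(\kappa,h)$ upgrades to the $(\lambda,1)$-bound. In both cases the point is purely that the scales $\{h^{|\cdot|}W^{(\lambda)}_\cdot\}_{\lambda,h}$ and $\{W^{(\lambda)}_\cdot\}_\lambda$ are mutually cofinal, which is exactly the content of $(v)$.

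For the topological statement, I would note that by Lemma~\ref{Lemma61}$(iii)$ the matrix $\mathcal{M}_\omega$ is increasing in $\lambda$, so $\mathcal{S}_{\{\mathcal{M}_\omega\}}$ is a countable inductive limit (over $\lambda,h\in\NN$) of the Banach spaces normed by $\|\cdot\|_{\infty,\mathbf{W}^{(\lambda)},h}$, and $\mathcal{S}_{\{\omega\}}$ is the inductive limit over $\lambda\in\NN$ of the Banach spaces normed by $\|\cdot\|_{\infty,\mathbf{W}^{(\lambda)}}$. The cofinality established above says precisely that each defining Banach space of one limit continuously includes into a defining Banach space of the other, with the inclusion maps bounded; hence the two inductive limits coincide as locally convex spaces. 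Symmetrically, $\mathcal{S}_{(\mathcal{M}_\omega)}$ and $\mathcal{S}_{(\omega)}$ are projective limits whose defining seminorm systems are mutually cofinal, so they agree topologically as well. Alternatively, since all four spaces are webbed and ultrabornological (as in \cite{nuclearglobal2}), the algebraic equality together with the obvious continuity of one inclusion forces, by the open mapping theorem, that the identity is a topological isomorphism.

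The main obstacle is bookkeeping with the \emph{order of quantifiers} in condition $(v)$ of Lemma~\ref{Lemma61}, namely ``$\forall h\ \exists A\ \forall\lambda\ \exists D$'': in the Roumieu case one first fixes $h$ (coming with $f$) and then gets $A$ uniformly, which is fine; in the Beurling case one must instead, for a prescribed target $\lambda$, go backwards to find an admissible $h$, and one has to make sure the resulting $h$ can be taken arbitrarily small (so that the Beurling ``$\forall h$'' hypothesis on $f$ can be invoked at that $h$). This is where a little care is needed, but it is a formal matter since $A$ in $(v)$ does not depend on $\lambda$; once the quantifier juggling is arranged, there is no analytic difficulty, and everything else — replacing $\alpha$ by the multi-index $\alpha+\beta$, and the passage from the seminorm equality to the topological identification — is routine. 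I would also remark that the same argument handles the variants with $h^{|\alpha+\beta|}$ in the denominator and with the weighted $L^2$-seminorms, using parts $(i)$, $(ii)$, $(vi)$, $(vii)$ of Lemma~\ref{Lemma61} together with item~$(i)$ of the summary, exactly as in \cite[Proposition 5]{nuclearglobal2}.
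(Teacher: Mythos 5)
Your argument is correct and is essentially the standard cofinality proof: the paper itself does not reprove this statement but recalls it from \cite[Proposition 5]{nuclearglobal2}, whose proof rests on exactly the mutual cofinality of the seminorm scales $\{h^{|\cdot|}W^{(\lambda)}_{\cdot}\}$ and $\{W^{(\lambda)}_{\cdot}\}$ via Lemma~\ref{Lemma61}$(v)$ that you describe. Your phrasing of the Beurling step is slightly loose (one does not ``choose $h$''; given the target pair $(\lambda,h)$ one applies $(v)$ with $1/h$ to obtain $A$ and then invokes the $\mathcal{S}_{(\omega)}$-hypothesis at the smaller index $\lambda/A$), but as you note this works precisely because $A$ in $(v)$ is independent of $\lambda$, so the proof is sound.
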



We refer to \cite{BJO-JFA} for a more complete characterization of such spaces, and to
\cite{BraunMeiseTaylor90} for the analogous spaces $\mathcal{E}_{\{\omega\}}/\mathcal{D}_{\{\omega\}}$ and $\mathcal{E}_{(\omega)}/\mathcal{D}_{(\omega)}$ of ultradifferentiable functions/with compact support.


\section{Oscillating sequences and a critical example case}\label{oscillating}
The aim of this section is to construct explicitly a weight sequence $\mathbf{M}$ which oscillates around a given fixed sequence $\mathbf{N}\in{\mathcal{LC}}$. We assume for $\mathbf{N}$ some more basic growth properties and show that these requirements can be transferred to $\mathbf{M}$, too. Moreover, these conditions yield the fact that the function $\omega\equiv\omega_{\mathbf{M}}$ is also oscillating around the weight $\omega_{\mathbf{N}}$. Since by construction $\mathbf{M}\in{\mathcal{LC}}$ we focus on the one-dimensional situation (or, equivalently, on the isotropic case, i.e. $M_{\alpha}:=M_{|\alpha|}$). As a special case we apply this to $\mathbf{N}\equiv\mathbf{G}^{1/2}:=(p!^{1/2})_{p\in\NN}$ and the corresponding weight function $\omega(t)=t^2$. This is a crucial case since it is related to the problem of
non-triviality of $\mathcal{S}_{(\omega)}$ and $\mathcal{S}_{\{\omega\}}$ (see Remark~\ref{oscillatingconsequence}).

\subsection{Construction of the sequence $\mathbf{M}$}\label{construction}

{\itshape Goal:} We construct the sequence $\mathbf{M}$ in terms of the quotients $\mu_p:=\frac{M_p}{M_{p-1}}$ by putting $M_p:=\prod_{i=1}^p\mu_i$ (and $M_0:=1$, empty product) and consider the associated weight function $\omega_{\mathbf{M}}$. More precisely, the aim is to show that
\begin{itemize}
\item[$(i)$] $\omega_{\mathbf{M}}$ satisfies $(\alpha), (\gamma), (\delta)$,

\item[$(ii)$] $\inf_{p\ge 1}\left(\frac{M_p}{N_p}\right)^{1/p}=0$ and

\item[$(iii)$] $\sup_{p\ge 1}\left(\frac{M_p}{N_p}\right)^{1/p}=+\infty$.
\end{itemize}
Concerning $(i)$, first we recall that for any given $\mathbf{M}\in{\mathcal{LC}}$ the function $\omega_{\mathbf{M}}$ satisfies automatically the basic assumption in Definition \ref{defomega} and conditions $(\gamma)$ and $(\delta)$, see \cite[Chapitre I]{mandelbrojtbook}, \cite[Definition 3.1]{Komatsu73} and \cite[Lemma 12 $(4)\Rightarrow(5)$]{BonetMeiseMelikhov07}.\vspace{6pt}

We start the construction as follows. Let from now on $(\alpha_j)_{j\ge 0}$ be a sequence of positive real numbers such that
\begin{equation}\label{alphagrowthres}
1<\alpha_{\min}:=\inf_{j\ge 0}\alpha_j\le\sup_{j\ge 0}\alpha_j=:\alpha_{\max}<+\infty.
\end{equation}
Moreover let $Q\in\NN$, $Q\ge 2$, be given, arbitrary but fixed. We introduce a new sequence $(\beta_j)_{j\ge 1}$ by
\begin{equation}\label{betadef}
\beta_1=\dots=\beta_{Q-1}:=\alpha_0^{\frac{1}{Q-1}}
\end{equation}
and
\begin{equation}\label{betadef1}
\beta_{Q^n}=\dots=\beta_{Q^{n+1}-1}:=\alpha_n^{\frac{1}{Q^n(Q-1)}},\;\;\;n\in\NN.
\end{equation}
Finally $\mathbf{M}$ is defined via the quotients $(\mu_j)_{j\ge 1}$ as follows: We put
\begin{equation}\label{betadef2}
\mu_1:=c\ge 1,\hspace{30pt}\mu_{j+1}:=\beta_j\mu_j,\;\;\;j\ge 1.
\end{equation}
Using this we have $\frac{\mu_{Qj}}{\mu_j}=\frac{\mu_{j+1}}{\mu_j}\cdots\frac{\mu_{Qj}}{\mu_{Qj-1}}=\beta_j\cdots\beta_{Qj-1}$ for all $j\ge 1$ which yields a product consisting of $j(Q-1)$-many factors.\vspace{6pt}

{\itshape Claim I:} $\alpha_{\min}\le\frac{\mu_{Qj}}{\mu_j}\le\alpha_{\max}$ is valid for all $j\in\NN$.\vspace{6pt}

If $j=1$, then $\frac{\mu_{Qj}}{\mu_j}=\frac{\mu_Q}{\mu_1}=\beta_1\cdots\beta_{Q-1}=\alpha_0^{\frac{Q-1}{Q-1}}=\alpha_0$.

If $j=Q^n$, $n\in\NN$ arbitrary, then $Qj=Q^{n+1}$ and so $\frac{\mu_{Qj}}{\mu_j}=\beta_{Q^n}\cdots\beta_{Q^{n+1}-1}=\alpha_n^{\frac{Q^{n+1}-Q^n}{Q^n(Q-1)}}=\alpha_n$.

If $Q^n<j\le Q^{n+1}-1$, $n\in\NN$ arbitrary, then $Q^{n+1}<Qj\le Q^{n+2}-Q<Q^{n+2}-1$ and we get, for $i=Q^{n+1}-j$,
\begin{align*}
\frac{\mu_{Qj}}{\mu_j}&=\beta_j\cdots\beta_{Qj-1}=\alpha_n^{\frac{i}{Q^n(Q-1)}}\cdot\alpha_{n+1}^{\frac{j(Q-1)-i}{Q^{n+1}(Q-1)}}\ge\alpha_{\min}^{\frac{Qi+Qj-j-i}{Q^{n+1}(Q-1)}}
\\&
=\alpha_{\min}^{\frac{(i+j)(Q-1)}{Q^{n+1}(Q-1)}}=\alpha_{\min}^{\frac{(i+j)}{Q^{n+1}}}=\alpha_{\min}.
\end{align*}
Finally, if $Q^0=1<j\le Q-1$ (when $Q\ge 3$), then we can estimate as before replacing
$\alpha_n$ and $\alpha_{n+1}$ by $\alpha_0$ and $\alpha_1$ respectively. The estimate from above in the claim is obtained analogously when taking $\alpha_{\max}$ instead of $\alpha_{\min}$.\vspace{6pt}

{\itshape Claim II:} $\mathbf{M}\in{\mathcal{LC}}$ holds.

Since by definition and assumption $\alpha_j>1\Leftrightarrow\beta_j>1\Leftrightarrow\mu_{j+1}>\mu_j$ for all $j\in\NN$ we have that $\mathbf{M}$ is log-convex. The previous Claim I yields $\mu_{Qj}\ge\alpha_{\min}\mu_j$ for all $j\in\NN$, thus by iteration $\mu_{Q^nj}\ge\alpha_{\min}^n\mu_j$ for all $n\in\NN$. Consequently we get $\mu_{Q^n}\ge\alpha_{\min}^n\mu_1=\alpha_{\min}^nc\ge\alpha_{\min}^n$ which tends to infinity as $n\rightarrow\infty$ because $\alpha_{\min}>1$ by assumption. This proves $\lim_{j\rightarrow\infty}\mu_j=+\infty$, hence $\lim_{j\rightarrow\infty}(M_j)^{1/j}=+\infty$ follows (e.g. see \cite[p. 104]{compositionpaper}).\vspace{6pt}

Now we start with the definition of $\mathbf{M}$ in terms of the aforementioned construction by using the auxiliary sequences $(\beta_j)_j$ resp. $(\alpha_j)_j$. We put $$\mu_0=\mu_1:=1(=c),$$
so $M_1=M_0=1$ follows which ensures normalization. The idea is now to define $\mathbf{M}$ (via $(\alpha_j)_j$) piece-wise by considering an increasing sequence (of integers) $(k_j)_{j\ge 1}$.
Given a sequence $\mathbf{N}\in{\mathcal{LC}}$ we consider the sequence of its quotients
$$\nu_k=\frac{N_k}{N_{k-1}}, \quad k=1,2,\ldots,$$
and, moreover, we assume that $\mathbf{N}$ satisfies
\begin{equation}\label{cond-fuzzy}
\exists\;Q\in\mathbb{N}:\;\;\;1<\liminf_j\frac{\nu_{Qj}}{\nu_j}\ \ \text{and }\ \ \sup_j\frac{\nu_{2j}}{\nu_j}<+\infty.	
\end{equation}
It is immediate that $Q\ge 2$ in the above requirement and \eqref{cond-fuzzy} is crucial to ensure that $\omega_{\mathbf{M}}$ satisfies $(\alpha)$ (see $(III)$ below) and that $\mathbf{M}$ has moderate growth (see $(IV)$ below).

Let now $Q$ be the parameter according to \eqref{cond-fuzzy} and without loss of generality $Q\ge 3$.
First we set $k_1:=Q$ and
$$\alpha_0=\alpha_1:=4\sqrt{\nu_{k_1}}.$$
Then we have
$$\mu_{k_1}=\mu_Q:=\alpha_0\mu_1=\alpha_0=4\sqrt{\nu_{k_1}}(\ge 4),$$
and for $1<i<k_1$ we have put $\mu_i:=\beta_{i-1}\mu_{i-1}=\alpha_0^{\frac{1}{Q-1}}\mu_{i-1}$ (see \eqref{betadef}).\vspace{6pt}

In the next step we select a number $n_1\in\NN$, $n_1\ge 2$, and put $k_2:=Q^{n_1}k_1=Q^{n_1+1}>k_1$. Here we choose $n_1$ sufficiently large in order to ensure $\nu_{Q^{n_1}k_1}>64\nu_{k_1}$ (note that $\lim_{j\rightarrow\infty}\nu_j=\infty$). Then we set
$$\alpha_2=\dots=\alpha_{n_1}:=64^{-\frac{1}{n_1-1}}\big(\frac{\nu_{k_2}}{\nu_{k_1}}\big)^{\frac{1}{n_1-1}}>1,$$
and get $\alpha_0\cdot\alpha_1\cdot\alpha_2\cdots\alpha_{n_1}=\displaystyle 16\nu_{k_1}\frac{1}{64}\frac{\nu_{k_2}}{\nu_{k_1}}=\frac{1}{4}\nu_{k_2}$. Hence, by  \eqref{betadef1} and \eqref{betadef2} we get $$\frac{\mu_{k_2}}{\mu_{k_1}}=\frac{\mu_{Q^{n_1+1}}}{\mu_Q}=\prod^{Q^{n_1+1}-1}_{i=Q}\frac{\mu_{i+1}}{\mu_i}=\prod_{l=1}^{n_1}\prod_{i=Q^l}^{Q^{l+1}-1}\beta_i=\alpha_1\cdots\alpha_{n_1},$$ and so one has
$$\mu_{k_2}=\mu_{Q^{n_1}k_1}=\mu_{k_1}\cdot\alpha_1\cdots\alpha_{n_1}=\alpha_0\cdot\alpha_1\cdot\alpha_2\cdots\alpha_{n_1}=\frac{1}{4}\nu_{k_2}.$$
Note that for $k_1<i<k_2$ we put $\mu_i:=\alpha_{l+1}^{\frac{1}{Q^{l+1}(Q-1)}}\mu_{i-1}$ whenever $Q^lk_1<i\le Q^{l+1}k_1$, $0\le l\le n_1-1$ (see \eqref{betadef1} and \eqref{betadef2}).\vspace{6pt}

Then select a number $n_2\in\NN$, $n_2\ge 2$, put $k_3:=Q^{n_2}k_2=Q^{n_2+n_1+1}>k_2$ and
$$\alpha_{n_1+1}=\dots=\alpha_{n_1+n_2}:=32^{\frac{1}{n_2}}\big(\frac{\nu_{k_3}}{\nu_{k_2}}\big)^{\frac{1}{n_2}}(\ge 32).$$
Hence we get
$$\alpha_0\cdot\alpha_1\cdot\alpha_2\cdots\alpha_{n_1+n_2}=\frac{1}{4}\nu_{k_2}32\frac{\nu_{k_3}}{\nu_{k_2}}=8\nu_{k_3}.$$ So
$$\mu_{k_3}=\mu_{Q^{n_2}k_2}=\alpha_0\cdot\alpha_1\cdot\alpha_2\cdots\alpha_{n_1+n_2}=8\nu_{k_3},$$
since $\frac{\mu_{k_3}}{\mu_{k_2}}=\prod^{Q^{n_2+n_1+1}-1}_{i=Q^{n_1+1}}\frac{\mu_{i+1}}{\mu_i}=\alpha_{n_1+1}\cdots\alpha_{n_1+n_2}$.

For $k_2<i<k_3$, again according to \eqref{betadef1} and \eqref{betadef2}, we have put $\mu_i:=\alpha_{l+n_1+1}^{\frac{1}{Q^{l+n_1+1}(Q-1)}}\mu_{i-1}$ whenever $Q^lk_2<i\le Q^{l+1}k_2$, $0\le l\le n_2-1$.\vspace{6pt}

And then we proceed as follows:\vspace{6pt}

{\itshape Case I - from odd to even numbers.} Given any $k_j$ with $j\ge 3$ odd, then we select $n_j\in\NN$, $n_j\ge 2$, put $k_{j+1}:=Q^{n_j}k_j=Q^{n_j+\dots+n_1+1}$ and define
$$\alpha_{n_{j-1}+\dots+ n_1+1}=\dots=\alpha_{n_j+n_{j-1}+\dots+n_1}:=(2^j(j+1))^{-\frac{1}{n_j}}\big(\frac{\nu_{k_{j+1}}}{\nu_{k_{j}}}\big)^{\frac{1}{n_j}}.$$
{\itshape Case II - from even to odd numbers.} Given any $k_j$ with $j\ge 4$ even, then we select $n_j\in\NN$, $n_j\ge 2$, put $k_{j+1}:=Q^{n_j}k_j=Q^{n_j+\dots+n_1+1}$ and define
$$\alpha_{n_{j-1}+\dots+ n_1+1}=\dots=\alpha_{n_j+n_{j-1}+\dots+n_1}:=(2^{j+1}j)^{\frac{1}{n_j}}\big(\frac{\nu_{k_{j+1}}}{\nu_{k_{j}}}\big)^{\frac{1}{n_j}}.$$
With these choices, first for all odd $j\ge 3$ (starting with the case $j=3$ from above) one has
\begin{align*}
\alpha_0\alpha_1\alpha_2\cdots\alpha_{n_j+n_{j-1}+\dots+n_1}&=\left(\alpha_0\alpha_1\alpha_2\cdots\alpha_{n_{j-1}+\dots+ n_1}\right)\cdot\alpha_{n_{j-1}+\dots+ n_1+1}\cdots\alpha_{n_j+\dots+n_1}
\\&
=2^{j}\nu_{k_j}\frac{1}{2^j(j+1)}\cdot\frac{\nu_{k_{j+1}}}{\nu_{k_{j}}}=\frac{1}{j+1}\nu_{k_{j+1}},
\end{align*}
and so
\begin{align*}
&\mu_{k_{j+1}}=\mu_{Q^{n_j}k_j}=\alpha_0\alpha_1\alpha_2\cdots\alpha_{n_j+\dots+n_1}=\frac{1}{j+1}\nu_{k_{j+1}}.
\end{align*}
On the other hand, for all even $j\ge 4$ we see
\begin{align*}
\alpha_0\alpha_1\alpha_2\cdots\alpha_{n_j+n_{j-1}+\dots+n_1}&=\left(\alpha_0\alpha_1\alpha_2\cdots\alpha_{n_{j-1}+\dots+ n_1}\right)\cdot\alpha_{n_{j-1}+\dots+ n_1+1}\cdots\alpha_{n_j+\dots+n_1}
\\&
=\frac{1}{j}\nu_{k_j}\cdot 2^{j+1}j \frac{\nu_{k_{j+1}}}{\nu_{k_{j}}}=2^{j+1}\nu_{k_{j+1}},
\end{align*}
and so
\begin{align*}
&\mu_{k_{j+1}}=\mu_{Q^{n_j}k_j}=\alpha_0\alpha_1\alpha_2\cdots\alpha_{n_j+\dots+n_1}=2^{j+1}\nu_{k_{j+1}}.
\end{align*}

Moreover, recall that $\frac{\mu_{k_{j+1}}}{\mu_{k_j}}=\prod_{i=Q^{n_{j-1}+\dots+n_1+1}}^{Q^{n_j+\dots+n_1+1}-1}\frac{\mu_{i+1}}{\mu_i}=\alpha_{n_{j-1}+\dots+n_1+1}\cdots\alpha_{n_j+\dots+n_1}$ and for all $k_j<i<k_{j+1}$, according to \eqref{betadef1} and \eqref{betadef2}, we have set
$$\mu_i:=(\alpha_{l+n_{j-1}+\dots+n_1+1})^{\frac{1}{Q^{l+n_{j-1}+\dots+n_1+1}(Q-1)}}\mu_{i-1},\quad Q^lk_j<i\le Q^{l+1}k_j,\;\;\;0\le l\le n_j-1.$$
\vspace{6pt}

{\itshape Claim III:} \eqref{cond-fuzzy} implies \eqref{alphagrowthres}. First, we treat the upper estimates and note that for Case~I we have $(2^j(j+1))^{-\frac{1}{n_j}}\big(\frac{\nu_{k_{j+1}}}{\nu_{k_{j}}}\big)^{\frac{1}{n_j}}\le\big(\frac{\nu_{k_{j+1}}}{\nu_{k_{j}}}\big)^{\frac{1}{n_j}}\le A$ for some $A\ge 1$. The second estimate is equivalent to $\frac{\nu_{Q^{n_j}k_j}}{\nu_{k_{j}}}\le A^{n_j}$ (recall: $k_{j+1}=Q^{n_j}k_j$) and this is valid because by the second part of \eqref{cond-fuzzy} we have $\frac{\nu_{2j}}{\nu_j}\le B$ for some $B\ge 1$ and all $j\in\NN_0$ and so, iterating this estimate $cn_j$-times with $c\in\NN$ such that $Q\le 2^c$, we have $\frac{\nu_{Q^{n_j}k_j}}{\nu_{k_j}}\le\frac{\nu_{2^{cn_j}k_j}}{\nu_{k_j}}\le B^{cn_j}=A^{n_j}$ with $A:=B^c$. Here the first estimate holds by the log-convexity of $\mathbf{N}$.

For Case II with the expression $(2^{j+1}j)^{\frac{1}{n_j}}\big(\frac{\nu_{k_{j+1}}}{\nu_{k_{j}}}\big)^{\frac{1}{n_j}}$ we get the bound $(2^{j+1}j)^{\frac{1}{n_j}}A$ by the previous comments. And this can be bounded uniformly for all even $j\ge 4$ by some $A_1>A$ when choosing $n_j$ large enough. Therefore note that $A$ is not depending on the choice of $n_j$; it only depends on given (fixed) constants $Q$ and $B$, both depending only on $\mathbf{N}$ via \eqref{cond-fuzzy}. Summarizing, the upper estimate in \eqref{alphagrowthres} is verified for all $j\in\mathbb{N}$ since the remaining cases are only finitely many indices.\vspace{6pt}

Now we treat the lower estimate. By the first part in \eqref{cond-fuzzy} we have that there exists some $\epsilon>0$ such that (by iteration) $\frac{\nu_{k_{j+1}}}{\nu_{k_j}}=\frac{\nu_{Q^{n_j}k_j}}{\nu_{k_j}}\ge(1+\epsilon)^{n_j}$ provided that $k_j$ is chosen sufficiently large. We assume now that we have chosen $k_3$ large enough (for a fixed $\epsilon>0$) and so the above estimate holds for all $j\ge 3$. Now, concerning Case I for all odd $j\ge 3$ we estimate by $(2^j(j+1))^{-\frac{1}{n_j}}\big(\frac{\nu_{k_{j+1}}}{\nu_{k_{j}}}\big)^{\frac{1}{n_j}}\ge(2^j(j+1))^{-\frac{1}{n_j}}(1+\epsilon)>1$ and the last estimate is equivalent to requiring $(1+\epsilon)^{n_j}>2^j(j+1)$. This can be achieved by choosing $n_j$, $j\ge 3$ odd, sufficiently large.

Concerning Case II we observe $(2^{j+1}j)^{\frac{1}{n_j}}\big(\frac{\nu_{k_{j+1}}}{\nu_{k_{j}}}\big)^{\frac{1}{n_j}}\ge\big(\frac{\nu_{k_{j+1}}}{\nu_{k_{j}}}\big)^{\frac{1}{n_j}}\ge(1+\epsilon)$ for all even $j\ge 4$.

To guarantee $\alpha_j>1$ for all $j\in\NN$, i.e. also for $1\le j\le n_1+n_2$, we recall our choice for $n_1$ above.

\vskip\baselineskip
Summarizing, we get:
\begin{itemize}
\item[$(I)$] $\mathbf{M}\in{\mathcal{LC}}$: Normalization is obtained as seen above, log-convexity holds by the fact that $\alpha_j>1$ for all $j\in\NN$ and so the sequence $j\mapsto\mu_j$ is (strictly) increasing. Since for all odd $j\ge 3$ by construction we get $\mu_{k_j}\ge 2^{j+1}\nu_{k_j}$, we see that $\lim_{j\rightarrow\infty}\mu_j=+\infty$ (since $\nu_{k_j}$ is nondecreasing by the logarithmic convexity), and so also $\lim_{j\rightarrow\infty}(M_j)^{1/j}=+\infty$, e.g. see \cite[p. 104]{compositionpaper}.

\item[$(II)$] Moreover, by Claim III we see that $1<\alpha_{\min}\le\alpha_{\max}<+\infty$ when choosing $n_j$ large enough. Thus, by construction and Claim I we have $1<\liminf_{j\rightarrow\infty}\frac{\mu_{Qj}}{\mu_j}\le\limsup_{j\rightarrow\infty}\frac{\mu_{Qj}}{\mu_j}<+\infty$.

\item[$(III)$] The proof of \cite[Lemma 12, $(2)\Rightarrow(4)$]{BonetMeiseMelikhov07} shows that the lower estimate $1<\liminf_{j\rightarrow\infty}\frac{\mu_{Qj}}{\mu_j}$ implies $(\alpha)$ for $\omega_{\mathbf{M}}$. Thus $\omega_{\mathbf{M}}$ has all standard requirements to be a weight function except $(\beta)$, i.e. $\omega_{\mathbf{M}}$ is a general weight function.

\item[$(IV)$] By the upper estimate, $\mathbf{M}$ satisfies \eqref{M2}, see e.g. \cite[Lemma 2.2]{whitneyextensionweightmatrix}). Equivalently, by taking into account \cite[Prop. 3.6]{Komatsu73}, the associated weight function $\omega_{\mathbf{M}}$ satisfies
the following condition
    \begin{equation}\label{omega6}
\exists\;H\ge 1\;\forall\;t\ge 0:\;\;\;2\omega(t)\le\omega(Ht)+H,
\end{equation}
introduced in \cite[Corollary 16 $(3)$]{BonetMeiseMelikhov07} in order to compare
ultradifferentiable spaces defined by weight sequences $(M_p)_p$ and weight functions
$\omega_{\mathbf{M}}$.

\item[$(V)$] Let now $\mathcal{M}_{\omega_{\mathbf{M}}}=\{\mathbf{M}^{(\lambda)}: \lambda>0\}$ be the matrix associated to $\omega_{\mathbf{M}}$. By \cite[Lemma 5.9]{compositionpaper} and \eqref{omega6} it follows that $\mathbf{M}^{(\lambda)}\approx\mathbf{M}^{(\kappa)}$, i.e. $\mathcal{M}_{\omega_{\mathbf{M}}}$ is constant. In this case we get $\mathbf{M}\equiv\mathbf{M}^{(1)}$ by definition of $\mathbf{M}^{(1)}$ and \cite[Prop. 3.2]{Komatsu73} (see also the proof of \cite[Thm. 6.4]{testfunctioncharacterization}):
     \begin{align*}
     M^{(1)}_p&:=\exp(\varphi^{*}_{\omega_{\mathbf{M}}}(p))=\exp(\sup_{y\ge 0}\{py-\omega_{\mathbf{M}}(e^y)\})=\sup_{y\ge 0}\exp(py-\omega_{\mathbf{M}}(e^y))
     \\&
     =\sup_{y\ge 0}\frac{\exp(py)}{\exp(\omega_{\mathbf{M}}(e^y))}=\sup_{t\ge 1}\frac{t^p}{\exp(\omega_{\mathbf{M}}(t))}=\sup_{t\ge 0}\frac{t^p}{\exp(\omega_{\mathbf{M}}(t))}=M_p.
     \end{align*}
     Note that by normalization we have $\omega_{\mathbf{M}}(t)=0$ for $0\le t\le 1$ which follows by the known integral representation formula for $\omega_M$, see \cite[1.8. III]{mandelbrojtbook} and also \cite[$(3.11)$]{Komatsu73}, and since $t^p\le 1$ for $0\le t\le 1$, $p\in\NN_0$ arbitrary.

     Consequently, $\mathbf{M}^{(\lambda)}\approx\mathbf{M}$ for all $\lambda>0$ and so $$\mathcal{S}_{\{\omega_{\mathbf{M}}\}}=\mathcal{S}_{\{\mathcal{M}_{\omega_{\mathbf{M}}}\}}=\mathcal{S}_{\{\mathbf{M}\}},\hspace{30pt}\mathcal{S}_{(\omega_{\mathbf{M}})}=\mathcal{S}_{(\mathcal{M}_{\omega_{\mathbf{M}}})}=\mathcal{S}_{(\mathbf{M})},$$
     as topological vector spaces (and the spaces can be defined equivalently by joint or separated growth at infinity).

\item[$(VI)$] By construction we have $\mu_{k_j}=2^j\nu_{k_j}$ for all $j\ge 3$ odd and $\mu_{k_j}=\frac{1}{j}\nu_{k_j}$ for all $j\ge 4$ even. Thus $$\liminf_{p\rightarrow\infty}\frac{\mu_p}{\nu_p}=0\quad\mbox{ and }\quad \limsup_{p\rightarrow\infty}\frac{\mu_p}{\nu_p}=+\infty.$$
    Now,
    $$\exists\;A\ge 1\;\forall\;p\in\NN:\;\;\;(M_p)^{1/p}\le\mu_p\le A(M_p)^{1/p}.$$
    In fact, the first estimate follows by log-convexity and normalization (see e.g. \cite[Lemma 2.0.4]{diploma}), the second one by moderate growth, e.g. see again \cite[Lemma 2.2]{whitneyextensionweightmatrix}. Consequently the sequences $(M_p^{1/p})_p$ and $(\mu_p)_p$ are comparable up to a constant. By \eqref{cond-fuzzy} the same is valid for $\mathbf{N}$ and so we have
    $$\liminf_{p\rightarrow\infty}\left(\frac{M_p}{N_p}\right)^{1/p}=0,\hspace{30pt}\limsup_{p\rightarrow\infty}\left(\frac{M_p}{N_p}\right)^{1/p}=+\infty.$$
    Hence, $\mathbf{M}$ and $\mathbf{N}$ are not comparable, which means that neither $\mathbf{M}\preceq\mathbf{N}$ nor $\mathbf{N}\preceq\mathbf{M}$  hold  (consequently, neither $\mathbf{M}\vartriangleleft\mathbf{N}$ nor $\mathbf{N}\vartriangleleft\mathbf{M}$, too).

\end{itemize}


\subsection{The critical example case}\label{oscillatingpreliminaries}
Now we treat a particular case. Namely, when the weight sequence $\mathbf{N}$ is the critical sequence $\mathbf{G}^{1/2}:=(p!^{1/2})_{p\in\NN}\in{\mathcal{LC}}$, and hence we look for a weight function $\omega\equiv\omega_{\mathbf{M}}$ which oscillates  around the critical weight function $\omega(t)=t^2$, related to the problem of nontriviality of $\mathcal{S}_{(\omega)}$ and $\mathcal{S}_{\{\omega\}}$. First, we summarize  some known facts:
\begin{itemize}
\item[$(i)$] The associated weight function $\omega_{\mathbf{G}^{1/2}}$ is equivalent to $t\mapsto t^2$, i.e. $\omega_{\mathbf{G}^{1/2}}(t)=O(t^2)$ and $t^2=O(\omega_{\mathbf{G}^{1/2}}(t))$ as $t\rightarrow\infty$.

\item[$(ii)$] The ultradifferentiable classes coincide, i.e. $\mathcal{E}_{\{\mathbf{G}^{1/2}\}}=\mathcal{E}_{\{\omega_{\mathbf{G}^{1/2}}\}}$, $\mathcal{E}_{(\mathbf{G}^{1/2})}=\mathcal{E}_{(\omega_{\mathbf{G}^{1/2}})}$ which follows from \cite[Lemma 5.9, Theorem 5.14]{compositionpaper} applied to the weight $\omega\equiv\omega_{\mathbf{G}^{1/2}}$. For this note that $\mathbf{G}^{1/2}$ satisfies \eqref{M2} and so equivalently (see \cite[Prop. 3.6]{Komatsu73}) the function $\omega_{\mathbf{G}^{1/2}}$ has \eqref{omega6}. Finally $\mathbf{G}^{1/2}$ satisfies $$\liminf_{j\rightarrow\infty}\frac{\mu_{Qj}}{\mu_j}=Q^{1/2}>1$$
 for all $Q\in\NN$, $Q\ge 2$, and so $\omega_{\mathbf{G}^{1/2}}$ has $(\alpha)$ which follows by \cite[Lemma 12 $(2)\Rightarrow(4)$]{BonetMeiseMelikhov07}.

\item[$(iii)$] Similarly, again by  \cite[Lemma 5.9, Theorem 5.14]{compositionpaper} and the analogous definition of the global classes, we get that $\mathcal{S}_{\{\mathbf{G}^{1/2}\}}=\mathcal{S}_{\{\omega_{\mathbf{G}^{1/2}}\}}$, $\mathcal{S}_{(\mathbf{G}^{1/2})}=\mathcal{S}_{(\omega_{\mathbf{G}^{1/2}})}$. Moreover, we can define these classes equivalently by using separate or joint growth at infinity which holds by \eqref{algebra} (implied by \eqref{M1}) and \eqref{M2}.\vspace{6pt}

\item[$(iv)$] However, note that formally \cite[Theorem 14]{BonetMeiseMelikhov07} cannot be applied directly to $\mathbf{G}^{1/2}$ in order to conclude $(ii)$ and/or $(iii)$ since their basic assumption
\beqs
\label{M0}
\exists c > 0: \quad (c(p + 1))^p\leq M_p, \qquad p\in\N_0,
\eeqs
is violated for $\mathbf{G}^{1/2}$. Recall that \eqref{M0} for $\mathbf{M}$ means that $\liminf_{p\rightarrow\infty}(M_p/p!)^{1/p}>0$, which yields $\omega_{\mathbf{M}}(t)=O(t)$, see \cite[Lemma 12 $(4)\Rightarrow(5)$]{BonetMeiseMelikhov07}. Note also that \eqref{M0} is slightly stronger than our assumption $\lim_{j\rightarrow\infty}(M_j)^{1/j}=\infty$.
\end{itemize}

Translating  into the notation of growth relations  \cite[Lemma~13]{nuclearglobal2}
(whose proof did not use assumption $(\beta)$) we have the following result:

\begin{lemma}\label{Lemma63revisited}
Let $\omega$ be a (general) weight function. Then
$$t\mapsto t^2\preceq\omega\Longleftrightarrow\omega(t)=O(t^2)\Longleftrightarrow\;\forall\;\lambda>0:\;\mathbf{G}^{1/2}\preceq \mathbf{W}^{(\lambda)},$$
and
$$t\mapsto t^2\vartriangleleft\omega\Longleftrightarrow\omega(t)=o(t^2)\Longleftrightarrow\;\forall\;\lambda>0:\;\mathbf{G}^{1/2}\vartriangleleft \mathbf{W}^{(\lambda)}.$$
\end{lemma}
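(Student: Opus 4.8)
The plan is to translate the statement, via the weight matrix $\mathcal{M}_\omega=(\mathbf{W}^{(\lambda)})_{\lambda>0}$, into a relation between the weight function $\omega$ and the associated functions $\omega_{\mathbf{W}^{(\lambda)}}$, and then use the known dictionary between $\preceq$/$\vartriangleleft$ for weight sequences and growth estimates of associated functions (recalled in Section~\ref{notation} from \cite[Lemmas 3.8, 3.10]{Komatsu73}). Since $\mathbf{G}^{1/2}=(p!^{1/2})_p\in{\mathcal{LC}}$ and each $\mathbf{W}^{(\lambda)}$ is standard log-convex (by Lemma~\ref{Lemma61}(ii) together with the convexity of $\varphi_\omega^*$, so that $\mathbf{W}^{(\lambda)}\in{\mathcal{LC}}$ once $(\gamma)$ guarantees $(W^{(\lambda)}_p)^{1/p}\to\infty$), those lemmas apply: $\mathbf{G}^{1/2}\preceq\mathbf{W}^{(\lambda)}$ is equivalent to $\omega_{\mathbf{W}^{(\lambda)}}(t)\le\omega_{\mathbf{G}^{1/2}}(At)+B$ for suitable $A,B$, and analogously for $\vartriangleleft$.

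First I would compute the associated function of $\mathbf{W}^{(\lambda)}$. Using \eqref{Wlambda} and the definition of the Young conjugate, one has the standard duality
\[
\omega_{\mathbf{W}^{(\lambda)}}(t)=\sup_{p\in\NN_0}\Bigl(p\log|t|-\tfrac1\lambda\varphi_\omega^*(\lambda p)\Bigr),
\]
and since $\varphi_\omega^{**}=\varphi_\omega$ (with $\varphi_\omega(s)=\omega(e^s)$), a Legendre-transform computation gives $\omega_{\mathbf{W}^{(\lambda)}}(t)\sim\frac1\lambda\varphi_\omega(\lambda\log|t|)=\frac1\lambda\omega(|t|^\lambda)$ up to controlled constants; more precisely one has two-sided estimates of the form $\tfrac1\lambda\omega(|t|^\lambda)-c_\lambda\le\omega_{\mathbf{W}^{(\lambda)}}(t)\le\tfrac1\lambda\omega(|t|^\lambda)$ (this is exactly the content behind \cite[Lemma 13]{nuclearglobal2} referenced just before the statement). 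Next I would compute $\omega_{\mathbf{G}^{1/2}}$: since $(p!)^{1/p}\sim p/e$, a direct evaluation of $\sup_p(p\log t-\tfrac12\log p!)$ yields $\omega_{\mathbf{G}^{1/2}}(t)=t^2/(2e)+O(\log t)$, so $\omega_{\mathbf{G}^{1/2}}\sim(t\mapsto t^2)$, which is fact $(i)$ listed above. Then $\mathbf{G}^{1/2}\preceq\mathbf{W}^{(\lambda)}$ for all $\lambda$, via the Komatsu dictionary, becomes: for every $\lambda>0$ there exist $A,B$ with $\frac1\lambda\omega(t^\lambda)\le C(At)^2+B$, i.e. $\omega(s)\le C'\lambda s^{2}+B'$ after substituting $s=t^\lambda$ — taking $\lambda=1$ (or absorbing the power into the constants using that $\mathbf{W}^{(\lambda)}$ are ordered and $\preceq$-comparable across $\lambda$, by Lemma~\ref{Lemma61}(iii),(iv),(v)) this is exactly $\omega(t)=O(t^2)$. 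Conversely $\omega(t)=O(t^2)$ plugs straight back into the estimate $\omega_{\mathbf{W}^{(\lambda)}}(t)\le\frac1\lambda\omega(t^\lambda)\le\frac{C}{\lambda}t^{2\lambda}$ and one checks this is $\le\omega_{\mathbf{G}^{1/2}}(At)+B$; the chain $t^2\preceq\omega\Leftrightarrow\omega(t)=O(t^2)$ is immediate from the definition of $\preceq$ for weight functions, giving the first equivalence. The second line is identical with $O$ replaced by $o$ throughout, using the $\vartriangleleft$-version of the Komatsu dictionary (the $\forall A>0$ quantifier version), and noting $\omega(t)=o(t^2)$ iff $t^2\vartriangleleft\omega$ by definition.

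The main obstacle is the bookkeeping of the parameter $\lambda$: the relation $\mathbf{G}^{1/2}\preceq\mathbf{W}^{(\lambda)}$ must be shown for \emph{all} $\lambda>0$, and the naive estimate $\omega_{\mathbf{W}^{(\lambda)}}(t)\approx\frac1\lambda\omega(t^\lambda)$ involves $\omega$ evaluated at the $\lambda$-th power of $t$, which for $\lambda\ne1$ is not visibly of the form "$\omega$ composed with a linear rescaling" demanded by the $\preceq$-criterion. The resolution is that property $(v)$ of Lemma~\ref{Lemma61} (the $h^{|\alpha|}W^{(\lambda)}_\alpha\le D W^{(A\lambda)}_\alpha$ inequality) together with the ordering $(iii)$ and the doubling-type property $(iv)$ let one compare $\mathbf{W}^{(\lambda)}$ for different $\lambda$ up to $\approx$, so it suffices to prove the equivalence for one fixed $\lambda$, say $\lambda=1$, where $\omega_{\mathbf{W}^{(1)}}(t)\approx\omega(t)$ directly. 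Since the statement is quoted from \cite[Lemma 13]{nuclearglobal2} (with the remark that $(\beta)$ is not used — indeed $(\beta)$ is precisely the special case $\omega(t)=O(t^2)$ and would trivialize one direction, so its absence is exactly what makes the equivalence non-vacuous), I would simply cite that lemma, having re-derived the $\mathbf{G}^{1/2}$ side, and remark that the proof there goes through verbatim for a general weight function because it only uses $(\alpha),(\gamma),(\delta)$ and the formula \eqref{Prop32Komatsu}.
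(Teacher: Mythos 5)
Your final move --- re-derive the $\mathbf{G}^{1/2}$ side and then cite \cite[Lemma 13]{nuclearglobal2}, observing that its proof never uses $(\beta)$ --- is exactly what the paper does: the lemma is presented there as a direct translation of that reference, with no further argument. However, the self-contained derivation you sketch has a genuine error at its centre. From \eqref{Wlambda} and the substitution $q=\lambda p$ one gets
\[
\omega_{\mathbf{W}^{(\lambda)}}(t)=\sup_{p\in\NN_0}\Bigl(p\log t-\tfrac1\lambda\varphi_\omega^*(\lambda p)\Bigr)
=\tfrac1\lambda\sup_{q\in\lambda\NN_0}\bigl(q\log t-\varphi_\omega^*(q)\bigr)\approx\tfrac1\lambda\varphi_\omega^{**}(\log t)=\tfrac1\lambda\omega(t),
\]
not $\tfrac1\lambda\omega(t^{\lambda})$ as you claim (the correct two-sided estimate is \cite[Lemma 5.7]{compositionpaper}, used repeatedly in this paper). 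The ``$t^{\lambda}$'' obstacle that your last paragraph is devoted to resolving is therefore an artefact of this miscalculation, and the proposed resolution would not work in any case: properties $(iii)$--$(v)$ of Lemma \ref{Lemma61} do \emph{not} make the sequences $\mathbf{W}^{(\lambda)}$ pairwise $\approx$-equivalent --- that is precisely the assertion that $\mathcal{M}_{\omega}$ is constant, which requires \eqref{omega6} and fails for a general weight function --- and the monotonicity $(iii)$ propagates $\mathbf{G}^{1/2}\preceq\mathbf{W}^{(1)}$ only to indices $\lambda\ge 1$, never to small $\lambda$, so ``it suffices to treat $\lambda=1$'' is false for a $\forall\lambda$ statement. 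With the correct formula no reduction is needed: for each fixed $\lambda$ the Komatsu dictionary turns $\mathbf{G}^{1/2}\preceq\mathbf{W}^{(\lambda)}$ into $\tfrac1\lambda\omega(t)\le C(At)^2+B$ with constants that may depend on $\lambda$ (which the relation $\preceq$ permits), and this is exactly $\omega(t)=O(t^2)$; the $o$-version is identical with the $\forall A$ quantifier. Your computation that $\omega_{\mathbf{G}^{1/2}}$ is equivalent to $t\mapsto t^2$, and your remark on the role of $(\beta)$, are both correct.
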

Similarly, following the lines in the proof of \cite[Lemma 13]{nuclearglobal2} we obtain:

\begin{lemma}\label{Lemma63revisitedconverse}
Let $\omega$ be a (general) weight function. Then
$$\omega\preceq t\mapsto t^2\Longleftrightarrow t^2=O(\omega(t))\Longleftrightarrow\;\forall\;\lambda>0:\;\mathbf{W}^{(\lambda)}\preceq \mathbf{G}^{1/2},$$
and
$$\omega\vartriangleleft t\mapsto t^2\Longleftrightarrow t^2=o(\omega(t))\Longleftrightarrow\;\forall\;\lambda>0:\;\mathbf{W}^{(\lambda)}\vartriangleleft\mathbf{G}^{1/2}.$$
\end{lemma}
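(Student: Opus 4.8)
The strategy is to mirror exactly the proof of \cite[Lemma 13]{nuclearglobal2} (equivalently Lemma \ref{Lemma63revisited} above), simply reversing the direction of all the growth relations. The first equivalence $\omega\preceq(t\mapsto t^2)\Longleftrightarrow t^2=O(\omega(t))$ is nothing but the definition of $\preceq$ for weight functions applied to $\sigma(t)=t^2$; likewise $\omega\vartriangleleft(t\mapsto t^2)\Longleftrightarrow t^2=o(\omega(t))$ is the definition of $\vartriangleleft$. So the content is entirely in the second equivalence, relating the behaviour of $\omega$ to the comparison of the matrix $\mathbf{W}^{(\lambda)}$ with $\mathbf{G}^{1/2}=(p!^{1/2})_p$.

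\textbf{Passing from $\omega$ to the sequences.} Recall $W^{(\lambda)}_p=e^{\frac{1}{\lambda}\varphi^*_\omega(\lambda p)}$ and, by \eqref{Prop32Komatsu} applied to the log-convex normalized sequence $\mathbf{G}^{1/2}$ (indeed $\mathbf{G}^{1/2}\in\mathcal{LC}$), that $\omega_{\mathbf{G}^{1/2}}$ and $t\mapsto t^2$ are equivalent (fact $(i)$ in Section \ref{oscillatingpreliminaries}). First I would assume $t^2=O(\omega(t))$, i.e.\ there is $A\ge1$ with $t^2\le A\,\omega(t)$ for $t$ large, hence (adjusting the additive constant) $\omega_{\mathbf{G}^{1/2}}(t)\le C\,\omega(t)+C$ for all $t\ge0$ and some $C\ge1$. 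Taking $\varphi$-substitution $t=e^s$ and passing to Young conjugates — using that $\varphi^*$ reverses the order and that $\varphi^*_{c\psi+c}(u)=c\,\varphi^*_{\psi}(u/c)-c$ up to standard bookkeeping — one converts the inequality $\omega_{\mathbf{G}^{1/2}}\le C\omega(\cdot)+C$ into $\varphi^*_\omega(u)\le \tfrac1C\varphi^*_{\omega_{\mathbf{G}^{1/2}}}(Cu)+C'$. Then \eqref{Prop32Komatsu} (for $\mathbf{G}^{1/2}$) and the definition of $W^{(\lambda)}_p$ yield $W^{(\lambda)}_p\le D^p\,(p!^{1/2})$ for a constant $D$ depending on $\lambda$, i.e.\ $\mathbf{W}^{(\lambda)}\preceq\mathbf{G}^{1/2}$ for every $\lambda>0$. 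The quantifier ``$\forall\lambda$'' comes out because the estimate $t^2\le A\omega(t)$ is independent of $\lambda$; rescaling $t\mapsto\lambda t$ only changes $A$ by a fixed factor.

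\textbf{The converse direction and the $\vartriangleleft$ case.} Conversely, assuming $\mathbf{W}^{(\lambda)}\preceq\mathbf{G}^{1/2}$ for all $\lambda$ (in particular for one fixed $\lambda$), I would use the characterization recalled in Section \ref{notation}: for $\mathbf{M},\mathbf{N}\in\mathcal{LC}$, $\mathbf{M}\preceq\mathbf{N}$ is equivalent to $\omega_{\mathbf{N}}(t)\le\omega_{\mathbf{M}}(At)+B$. Applying this with $\mathbf{M}=\mathbf{W}^{(\lambda)}$ and $\mathbf{N}=\mathbf{G}^{1/2}$, together with $\omega_{\mathbf{W}^{(\lambda)}}\sim\omega$ (which holds since $\mathcal{M}_\omega$ recovers $\omega$: by Lemma \ref{Lemma61} and Proposition \ref{Proposition61}, more concretely $\omega_{\mathbf{W}^{(\lambda)}}(t)=\lambda\omega(t^{1/\lambda})\cdot\!$—up to equivalence—so $\omega_{\mathbf{W}^{(\lambda)}}\sim\omega$) and $\omega_{\mathbf{G}^{1/2}}\sim(t\mapsto t^2)$, gives $t^2\le C\omega(At)+C$, i.e.\ $t^2=O(\omega(t))$ by property $(\alpha)$ of $\omega$ which absorbs the dilation $At$. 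The $\vartriangleleft$ statement is handled identically, replacing ``$\exists A$'' by ``$\forall A>0\ \exists B$'' throughout (the sequence-to-function dictionary for $\vartriangleleft$ is the one recalled right after \cite[Lemmas 3.8, 3.10]{Komatsu73} in Section \ref{notation}), and noting $t^2=o(\omega(t))$ likewise survives the dilation because of $(\alpha)$.

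\textbf{Main obstacle.} The only delicate point is the Legendre/Young-conjugate bookkeeping: translating an inequality between the weight functions $\omega$ and $t\mapsto t^2$ (or $\omega_{\mathbf{G}^{1/2}}$) into the correct inequality between $\varphi^*_\omega$ and $\varphi^*_{\omega_{\mathbf{G}^{1/2}}}$, and then back to the sequences $W^{(\lambda)}_p$ and $p!^{1/2}$ via \eqref{Prop32Komatsu}, while keeping track of how the multiplicative constants and the parameter $\lambda$ interact — in particular verifying that an estimate valid for \emph{one} $\lambda$ propagates to \emph{all} $\lambda$ thanks to property $(\alpha)$, which is precisely where condition $(\alpha)$ of the (general) weight function is used. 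Since this is exactly the computation already carried out in \cite[Lemma 13]{nuclearglobal2} with the roles of the two weights interchanged (and assumption $(\beta)$ was not used there), I would simply say ``following the lines of the proof of \cite[Lemma 13]{nuclearglobal2}, reversing all inequalities'' and indicate the two or three spots where the reversal is substantive.
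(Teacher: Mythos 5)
Your proposal is correct and follows essentially the same route as the paper, which itself only says ``following the lines in the proof of \cite[Lemma 13]{nuclearglobal2}'' (i.e.\ the Young-conjugate computation with the inequalities reversed, plus the dictionary between $\preceq$/$\vartriangleleft$ for sequences and their associated functions); your sketch just makes that computation explicit. The only blemish is the parenthetical formula $\omega_{\mathbf{W}^{(\lambda)}}(t)=\lambda\omega(t^{1/\lambda})$, which is not the correct relation (one has $\omega_{\mathbf{W}^{(\lambda)}}$ comparable to $\tfrac{1}{\lambda}\omega$ by \cite[Lemma 5.7]{compositionpaper}), but since you only use the resulting equivalence $\omega_{\mathbf{W}^{(\lambda)}}\sim\omega$ this does not affect the argument.
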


Note that these results follow also from \cite[Lemma 5.16, Corollary 5.17]{compositionpaper} and we can replace in all conditions ``$\forall\;\lambda>0$'' equivalently by ``$\exists\;\lambda>0$''.

Now, for $\mathbf{M}\equiv\mathbf{W}^{(\lambda)}$ we have that $\mathbf{M}$ and $\mathbf{G}^{1/2}$ are not comparable which means: Neither $\mathbf{M}\preceq\mathbf{G}^{1/2}$ nor $\mathbf{G}^{1/2}\preceq\mathbf{M}$ holds (hence neither $\mathbf{M}\vartriangleleft\mathbf{G}^{1/2}$ nor $\mathbf{G}^{1/2}\vartriangleleft\mathbf{M}$, too).

It follows from Lemmas \ref{Lemma63revisited} and \ref{Lemma63revisitedconverse}  that neither $\omega_{\mathbf{M}}\preceq t\mapsto t^2$ nor $t\mapsto t^2\preceq\omega_{\mathbf{M}}$ is valid (and hence neither $\omega_{\mathbf{M}}\vartriangleleft t\mapsto t^2$ nor $t\mapsto t^2\vartriangleleft\omega_{\mathbf{M}}$, too).

Finally, we mention that $\mathbf{M}$ does not satisfy the requirements of \cite{BonetMeiseMelikhov07} because their basic assumption \eqref{M0} is violated: since
\begin{equation}
	\liminf_{p\rightarrow\infty}\left(\frac{M_p}{p!^{1/2}}\right)^{1/p}=0
	\label{critical}
\end{equation}
 then also $\liminf_{p\rightarrow\infty}\left(\frac{M_p}{p!}\right)^{1/p}=0$.
 Moreover, from \eqref{critical} again we also get that the sequence $\mathbf{M}$ cannot satisfy the conditions in \cite[Prop.~3]{nuclearglobal2}. Hence, the spaces $\mathcal{S}_{(\mathbf{M})}$ and $\mathcal{S}_{\{\mathbf{M}\}}$ do not contain any Hermite function. Still, we do not know if these classes are nontrivial.
However, the existence of such an oscillating sequence is important in view of the following:
\begin{remark}{\rm
	\label{oscillatingconsequence}
Let $\omega$ be a given (general) weight function according to Definition \ref{defomega}. If $\omega(t)=o(t^2)$, then \cite[Cor.~3$(b)$]{nuclearglobal2} yields that $\mathcal{S}_{(\omega)}$ is nontrivial (at least all Hermite functions are contained in this class).

However, when $t^2=O(\omega(t))$ as $t\rightarrow\infty$, then we prove now that $\mathcal{S}_{(\omega)}=\{0\}$: First, for any $f\in\mathcal{S}_{(\omega)}$ we get
$$\forall\;\lambda>0:\;\;\;\sup_{x\in\RR^d}|f(x)e^{\lambda\omega(x)}|<\infty,\hspace{30pt}\sup_{\xi\in\RR^d}|\widehat{f}(\xi)e^{\lambda\omega(\xi)}|<\infty,$$
which gives, by the relation $t^2=O(\omega(t))$,
$$\sup_{x\in\RR^d}|f(x)e^{x^2/2}|<\infty,\hspace{30pt}\sup_{\xi\in\RR^d}|\widehat{f}(\xi)e^{\xi^2/2}|<\infty.$$
Now \cite[Corollary]{Hoe91} yields  $f\equiv 0$.\vspace{6pt}

Analogously, in the Roumieu case $\omega(t)=O(t^2)$ implies by \cite[Cor. 3 $(a)$]{nuclearglobal2} that $\mathcal{S}_{\{\omega\}}$ is nontrivial but $t^2=o(\omega(t))$ implies $\mathcal{S}_{\{\omega\}}=\{0\}$.}
\end{remark}


\section{Characterization of the inclusion relations of global ultradifferentiable classes}\label{characterizationsection}

In this section we characterize the inclusion relations of spaces of rapidly decreasing ultradifferentiable functions using the isomorphisms with sequence spaces obtained in
\cite{nuclearglobal2}.
Let us distinguish the various cases.


\subsection{The weight function case}\label{characterizationweightfunction}

In this case, for a weight function $\omega$ as in Definition~\ref{defomega},
we recall from \cite[Corollary 5]{nuclearglobal2}, the following isomorphisms,
 in the Roumieu case
\beqs
\label{Lambda1}
\quad\mathcal{S}_{\{\omega\}}(\RR^d)\cong
\Lambda_{\{\omega\}}:=\{&&\!\!\mathbf{c}=(c_{\alpha})_{\alpha\in\NN^d_0}\in\CC^{\NN^d}:\\
\nonumber
&&\exists j\in\NN:\
\|\mathbf{c}\|_{\omega,j}:=\sup_{\alpha\in\NN^d_0}|c_{\alpha}|e^{\frac{1}{j}\omega(\frac{\sqrt{\alpha}}{j})}<+\infty\},
\eeqs
where $\sqrt{\alpha}:=(\sqrt{\alpha_1},\dots,\sqrt{\alpha_d})$, and in the Beurling case,  under the additional assumption $\omega(t)=o(t^2)$ as $t\to+\infty$:
\beqs
\label{Lambda2}
\quad\mathcal{S}_{(\omega)}(\RR^d)\cong\Lambda_{(\omega)}:=\{&&\!\!\mathbf{c}=(c_{\alpha})_{\alpha\in\NN^d_0}\in\CC^{\NN^d}:\\
\nonumber
&&\forall j\in\NN:\
\|\mathbf{c}\|_{\omega,\frac{1}{j}}:=\sup_{\alpha\in\NN^d_0}|c_{\alpha}|e^{j\omega(\sqrt{\alpha}j)}<+\infty\}.
\eeqs

We can then prove the following:

\begin{theorem}\label{Gelfand-inclusion3new}
	Let $\omega$ and $\sigma$ be weight functions. Then the following are equivalent:
	\begin{itemize}
		\item[$(i)$] $\omega\preceq\sigma$, i.e. $\sigma(t)=O(\omega(t))$ as $t\rightarrow\infty$,

		\item[$(ii$)] $\mathcal{S}_{\{\omega\}}(\RR^d)\subseteq\mathcal{S}_{\{\sigma\}}(\RR^d)$ holds for all dimensions $d\in\NN$ with continuous inclusion.
	\end{itemize}
More precisely, the spaces in $(ii)$ can be defined by joint or separated growth at infinity; $(i)\Rightarrow(ii)$ is valid for general weight functions $\omega$ and $\sigma$ and for $(ii)\Rightarrow(i)$ only the inclusion for $d=1$ is required.
\end{theorem}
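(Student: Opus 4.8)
The plan is to pass through the sequence-space isomorphisms \eqref{Lambda1} and reduce both implications to the growth comparison of the weight functions inside the exponential weights $e^{\frac{1}{j}\omega(\sqrt{\alpha}/j)}$.

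For $(i)\Rightarrow(ii)$, I would argue directly at the level of the defining seminorms, without even invoking the isomorphism. Assume $\sigma(t)\le C\omega(t)$ for $t$ large, hence (using that $\sigma$ is bounded on compacts and $\omega\ge 0$) $\sigma(t)\le C'\omega(t)+C'$ for all $t\ge 0$ with some $C'\ge 1$. By \cite[Lemma 5.9]{compositionpaper} (or a direct Young-conjugate computation), this growth relation transfers to the associated weight matrices: for every $\lambda>0$ there is $\kappa>0$ with $\mathbf{W}_\omega^{(\kappa)}\preceq\mathbf{W}_\sigma^{(\lambda)}$, i.e. $W_{\sigma,\alpha}^{(\lambda)}\le H^{|\alpha|}W_{\omega,\alpha}^{(\kappa)}$ for some $H\ge 1$. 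Combined with Proposition~\ref{Proposition61}, which identifies $\mathcal{S}_{\{\omega\}}$ with $\mathcal{S}_{\{\mathcal{M}_\omega\}}$ (and likewise for $\sigma$), and with the freedom to insert an extra $h^{|\alpha+\beta|}$ in the denominator granted by Lemma~\ref{Lemma61}$(v)$, any $f$ with $\|f\|_{\infty,\mathbf{W}_\omega^{(\kappa)},h}\le C$ satisfies $\|f\|_{\infty,\mathbf{W}_\sigma^{(\lambda)},hH}\le C$, so $\mathcal{S}_{\{\omega\}}\subseteq\mathcal{S}_{\{\sigma\}}$; continuity of the inclusion is automatic since the map is defined by the same family of estimates and one has explicit seminorm bounds. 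The equivalence of the joint and separated descriptions is handled by Lemma~\ref{Lemma61}$(iv),(vii)$ for both $\omega$ and $\sigma$. Note this direction needs no assumption $(\beta)$ on either weight.

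For $(ii)\Rightarrow(i)$, I would use the isomorphism \eqref{Lambda1} in dimension $d=1$: the inclusion $\mathcal{S}_{\{\omega\}}(\RR)\subseteq\mathcal{S}_{\{\sigma\}}(\RR)$ is equivalent to $\Lambda_{\{\omega\}}\subseteq\Lambda_{\{\sigma\}}$ (as sets of sequences), and by the closed graph theorem this inclusion is continuous between these (DF-type, or Silva) spaces. Continuity at the level of the weighted $\ell^\infty$-type spaces $\Lambda_{\{\omega\}}=\varinjlim_j \ell^\infty(e^{\frac{1}{j}\omega(\sqrt{\cdot}/j)})$ means: for every $j\in\NN$ there exist $k\in\NN$ and $C>0$ such that $e^{\frac{1}{k}\sigma(\sqrt{n}/k)}\le C\,e^{\frac{1}{j}\omega(\sqrt{n}/j)}$ for all $n\in\NN_0$ — this is extracted by testing on the standard unit sequences $e_n$, which lie in $\Lambda_{\{\omega\}}$. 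Setting $t=\sqrt{n}/k$ and letting $n$ range over $\NN_0$, the values $\sqrt{n}/k$ are dense enough (the gaps $\sqrt{n+1}/k-\sqrt{n}/k\to 0$) that, together with the monotonicity and continuity of $\omega,\sigma$, the discrete inequality upgrades to $\frac{1}{k}\sigma(t/?)\le \frac{1}{j}\omega(t/?)+\log C$ for all real $t\ge 0$ after absorbing the scaling constants; using property $(\alpha)$ of $\omega$ to rescale the argument then yields $\sigma(t)=O(\omega(t))$, which is $(i)$.

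The main obstacle is the last step of $(ii)\Rightarrow(i)$: converting the discrete family of inequalities indexed by $n\in\NN_0$ (with the awkward arguments $\sqrt{n}/k$) into a clean estimate $\sigma(t)\le A\omega(Bt)+B$ for all real $t$, and then removing the scaling constant $B$ inside the argument of $\omega$. Here one must use that $\omega$ is a genuine weight function satisfying $(\alpha)$ (hence $\omega(Bt)\le L^{m}(\omega(t)+1)$ for $2^{m}\ge B$), which is precisely why condition $(\alpha)$ for $\omega$ is needed on the right-hand side and why the equivalence only needs $d=1$; the density of $\{\sqrt{n}/k : n\in\NN_0\}$ together with the continuity and monotonicity of both weights is what makes the discrete-to-continuous passage work. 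The application of the closed graph theorem requires a brief remark that $\Lambda_{\{\omega\}}$ and $\Lambda_{\{\sigma\}}$ are webbed and ultrabornological (or that both are (DFS)/(LB)-spaces), which is standard for these weighted inductive limits.
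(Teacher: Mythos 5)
Your proposal is correct and follows the paper's overall strategy: reduce to the sequence spaces $\Lambda_{\{\omega\}},\Lambda_{\{\sigma\}}$ via \eqref{Lambda1} in dimension $d=1$, extract a discrete inequality between $\sigma$ and $\omega$ at the points $\sqrt{n}$, and upgrade it to all real $t$ using monotonicity and iterated applications of $(\alpha)$ for $\omega$ (your handling of this last step, which you rightly flag as the delicate one, matches the paper's). The one genuine difference is how the discrete inequality is extracted. You test the \emph{continuity} of the inclusion on the unit sequences $e_n$, which forces you to invoke the closed graph theorem and Grothendieck factorization through the Banach steps of the $(LB)$-spaces to get constants uniform in $n$. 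The paper instead tests the mere \emph{set inclusion} on the single sequence $c_k:=e^{-\omega(\sqrt{k})}$, which lies in the first step of $\Lambda_{\{\omega\}}$ by construction; its membership in $\Lambda_{\{\sigma\}}$ means $\sup_k e^{-\omega(\sqrt{k})}e^{\frac{1}{l}\sigma(\sqrt{k}/l)}<\infty$ for some $l$, which is already the desired uniform estimate with no functional-analytic machinery at all. You may want to adopt this trick: it shortens the argument and makes clear that only the algebraic inclusion for $d=1$ is used. Your more elaborate route for $(i)\Rightarrow(ii)$ via the associated weight matrices is fine but heavier than necessary; the paper disposes of this direction directly from the definitions (a Young-conjugate computation from $\sigma\le C\omega+C$).
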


\begin{proof}
$(i)\Rightarrow(ii)$ follows by definition of the classes.

$(ii)\Rightarrow(i)$ By assumption and the isomorphism \eqref{Lambda1}
 we get  $\Lambda_{\{\omega\}}\cong\mathcal{S}_{\{\omega\}}(\RR^d)\subseteq\mathcal{S}_{\{\sigma\}}(\RR^d)\cong\Lambda_{\{\sigma\}}$.

We now use $(ii)$ for the case $d=1$. Consider the sequence $\mathbf{c}:=(c_k)_{k\in\NN_0}\in\CC^{\NN}$ defined by $c_k:=e^{-\omega(\sqrt{k})}$. It is clear that $\mathbf{c}\in\Lambda_{\{\omega\}}$ (choosing $j=1$) and so $\mathbf{c}\in\Lambda_{\{\sigma\}}$ as well. Thus
$$\exists\;l\in\NN\;\exists\;C\ge 1\;\forall\;k\in\NN_0:\;\;\;e^{-\omega(\sqrt{k})}=|c_k|\le Ce^{-\frac{1}{l}\sigma(\frac{\sqrt{k}}{l})},$$
which yields $\log(C)+\omega(\sqrt{k})\ge\frac{1}{l}\sigma(\frac{\sqrt{k}}{l})$ or equivalently $\sigma(\frac{\sqrt{k}}{l})\le l\log(C)+l\omega(\sqrt{k})$. We set $h:=\frac{\sqrt{k}}{l}$. Hence $\sigma(h)\le l\log(C)+l\omega(hl)\le l\log(C)+lC_1\omega(h)+lC_1$, by the iteration of property $(\alpha)$ for $\omega$. The number $n$ of iterations is only depending on given $l$, more precisely we choose $n\in\NN$ minimal to guarantee $l\le 2^n$. Note that all arising constants $C,C_1\ge 1$ and $l\in\NN$ are not depending on $k$ (hence not on $h$).

Finally let $t\in\RR$ with $\frac{\sqrt{k}}{l}<t<\frac{\sqrt{k+1}}{l}$ for some $k\in\NN$, then for $h:=\frac{\sqrt{k+1}}{l}$ we get
\begin{align*}
\sigma(t)&\le\sigma\left(\frac{\sqrt{k+1}}{l}\right)=\sigma(h)\le l\log(C)+l\omega(hl)\le l\log(C)+lC_1\omega\left(\frac{\sqrt{k+1}}{l}\right)+lC_1
\\&
\le l\log(C)+lC_1L\omega\left(\frac{\sqrt{k}}{l}\right)+lC_1L+lC_1\le l\log(C)+lC_1L\omega(t)+lC_1L+lC_1,
\end{align*}
with $L\ge 1$ denoting the constant arising in $(\alpha)$ for the weight $\omega$. More precisely, this condition implies $\omega(\frac{\sqrt{k+1}}{l})\le L\omega(\frac{\sqrt{k}}{l})+L$ for all $k\in\NN$ because $\frac{\sqrt{k+1}}{2l}\le\frac{\sqrt{k}}{l}$.
All arising constants are only depending on the given weights $\omega$ and $\sigma$ and not on $t\in\RR$. Thus we have shown $\limsup_{t\rightarrow\infty}\frac{\sigma(t)}{\omega(t)}<+\infty$, i.e. $\omega\preceq\sigma$.
\end{proof}

Now we treat the mixed case between the Roumieu- and Beurling-type classes.

\begin{theorem}\label{Gelfand-inclusionnew4}
	Let $\omega$ and $\sigma$ be weight functions with $\sigma(t)=o(t^2)$ as $t\rightarrow\infty$. Then the following are equivalent:
	\begin{itemize}
		\item[$(i)$] $\omega\vartriangleleft\sigma$, i.e. $\sigma(t)=o(\omega(t))$ as $t\rightarrow\infty$,

		\item[$(ii$)] $\mathcal{S}_{\{\omega\}}(\RR^d)\subseteq\mathcal{S}_{(\sigma)}(\RR^d)$ holds  for all dimensions $d\in\NN$ with continuous inclusion.
	\end{itemize}
More precisely, the spaces in $(ii)$ can be defined by joint or separated growth at infinity; $(i)\Rightarrow(ii)$ is valid for general weight functions $\omega$ and $\sigma$ and for $(ii)\Rightarrow(i)$ only the inclusion for $d=1$ is required.
\end{theorem}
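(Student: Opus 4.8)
The plan is to mimic the proof of Theorem~\ref{Gelfand-inclusion3new}, now using the isomorphisms \eqref{Lambda1} for the Roumieu side and \eqref{Lambda2} for the Beurling side. The implication $(i)\Rightarrow(ii)$ is again immediate from the definitions of the classes: if $\sigma(t)=o(\omega(t))$, then for every $\lambda>0$ there is $\mu>0$ with $\frac{1}{\lambda}\varphi^*_\sigma(\lambda|\gamma|)\ge\frac{1}{\mu}\varphi^*_\omega(\mu|\gamma|)$ up to an additive constant (equivalently $\mathbf{W}^{(\mu)}_\omega\vartriangleleft\mathbf{W}^{(\lambda)}_\sigma$ in the sense carried by Lemma~\ref{Lemma63revisited}-type transfers), so that the Roumieu seminorm for $\sigma$ is dominated by the Beurling one for $\omega$; here one only needs $\omega$ to be a general weight function, and the equivalence of joint and separated growth follows from Lemma~\ref{Lemma61}~$(iv),(vii)$ as before. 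The assumption $\sigma(t)=o(t^2)$ is exactly what is needed for the Beurling isomorphism \eqref{Lambda2} to be available.

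For $(ii)\Rightarrow(i)$, I would take $d=1$ and test the inclusion $\Lambda_{\{\omega\}}\subseteq\Lambda_{(\sigma)}$ on the sequence $\mathbf{c}=(c_k)_{k\in\NN_0}$ with $c_k:=e^{-\omega(\sqrt{k})}$. Just as in Theorem~\ref{Gelfand-inclusion3new} this lies in $\Lambda_{\{\omega\}}$ (taking $j=1$), hence by hypothesis $\mathbf{c}\in\Lambda_{(\sigma)}$, which by \eqref{Lambda2} now says
\[
\forall\,l\in\NN\ \exists\,C_l\ge 1\ \forall\,k\in\NN_0:\quad e^{-\omega(\sqrt{k})}=|c_k|\le C_l\,e^{-l\sigma(l\sqrt{k})}.
\]
Taking logarithms gives $l\sigma(l\sqrt{k})\le\log C_l+\omega(\sqrt{k})$, i.e. $\sigma(l\sqrt{k})\le\frac{1}{l}\log C_l+\frac{1}{l}\omega(\sqrt{k})$ for all $k$. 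Now fix an arbitrary $A\ge 1$; choosing $l\ge A$ and using monotonicity of $\sigma$ we obtain, for $h:=\sqrt{k}$,
\[
\sigma(Ah)\le\sigma(lh)\le \tfrac{1}{l}\log C_l+\tfrac{1}{l}\omega(h)\le\tfrac{1}{l}\log C_l+\tfrac{1}{A}\omega(h).
\]
Passing from the discrete values $h=\sqrt{k}$ to all large real $t$ is handled exactly as in the previous proof: for $\sqrt{k}\le t<\sqrt{k+1}$ one uses monotonicity upward ($\sigma(At)\le\sigma(A\sqrt{k+1})$) and property $(\alpha)$ of $\omega$ downward (using $\sqrt{k+1}\le 2\sqrt{k}$ to get $\omega(\sqrt{k+1})\le L(\omega(\sqrt{k})+1)\le L(\omega(t)+1)$), absorbing the finitely many small $k$ into constants. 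Since $A$ was arbitrary and the ``error'' constant $\tfrac{1}{l}\log C_l$ does not depend on $t$, this yields $\limsup_{t\to\infty}\sigma(At)/\omega(t)\le 1/A$ for every $A$, hence $\sigma(t)=o(\omega(t))$, that is $\omega\vartriangleleft\sigma$.

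The only genuinely new point compared with Theorem~\ref{Gelfand-inclusion3new} is the bookkeeping with the parameter $l$: in \eqref{Lambda2} the seminorm index runs as $l\to\infty$ (the Beurling ``for all $j$'' regime), so one must extract the factor $1/l$ in front of $\sigma$ and convert it, via $A\le l$, into an arbitrarily good asymptotic constant $1/A$ — this is what upgrades $\preceq$ to $\vartriangleleft$. The main (mild) obstacle is therefore not any single estimate but making sure the quantifiers are threaded correctly: for each target constant $A$ we pick one admissible $l\ge A$, get one constant $C_l$, and the resulting bound $\sigma(t)\le \frac1A\omega(t)+O(1)$ is uniform in $t$; letting $A\to\infty$ afterwards gives the little-$o$ statement. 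I would close by noting, as in the previous theorem, that the argument for $(ii)\Rightarrow(i)$ used only $d=1$ and that $(i)\Rightarrow(ii)$ needs $\omega,\sigma$ only as general weight functions, while $\sigma(t)=o(t^2)$ is used solely to legitimize the isomorphism \eqref{Lambda2}.
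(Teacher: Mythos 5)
Your proposal is correct and follows essentially the same route as the paper: same test sequence $c_k=e^{-\omega(\sqrt k)}$, same use of the isomorphisms \eqref{Lambda1}--\eqref{Lambda2}, and the same discretization step via monotonicity and property $(\alpha)$. The auxiliary parameter $A\le l$ is harmless but redundant — the paper simply bounds $\sigma(\sqrt k)\le\sigma(l\sqrt k)$ directly and lets the coefficient $L/l$ in front of $\omega(t)$ tend to $0$ as $l\to\infty$.
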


\begin{proof}
$(i)\Rightarrow(ii)$ follows by definition of the classes.

$(ii)\Rightarrow(i)$ By assumption and the isomorphisms \eqref{Lambda1}-\eqref{Lambda2},
 we get $\Lambda_{\{\omega\}}\cong\mathcal{S}_{\{\omega\}}(\RR^d)\subseteq\mathcal{S}_{(\sigma)}(\RR^d)\cong\Lambda_{(\sigma)}$.
We use assumption $(ii)$ for the case $d=1$ and proceed similarly as before. Consider the sequence $\mathbf{c}:=(c_k)_{k\in\NN_0}\in\CC^{\NN}$ defined by $c_k:=e^{-\omega(\sqrt{k})}$. It is clear that $\mathbf{c}\in\Lambda_{\{\omega\}}$ (by choosing $j=1$) and so $\mathbf{c}\in\Lambda_{(\sigma)}$ as well. Thus
$$\forall\;l\in\NN\;\exists\;C\ge 1\;\forall\;k\in\NN_0:\;\;\;e^{-\omega(\sqrt{k})}=|c_k|\le Ce^{-l\sigma(\sqrt{k}l)},$$
which yields $\sigma(\sqrt{k})\le\sigma(\sqrt{k}l)\le\frac{\log(C)}{l}+\frac{1}{l}\omega(\sqrt{k})$.

Let $t\in\RR$ with $\sqrt{k}<t<\sqrt{k+1}$ for some $k\in\NN$, then we get
\begin{align*}
\sigma(t)&\le\sigma(\sqrt{k+1})\le\frac{\log(C)}{l}+\frac{1}{l}\omega(\sqrt{k+1})\le\frac{\log(C)}{l}+\frac{1}{l}L\omega(\sqrt{k})+\frac{L}{l}
\\&
\le\frac{\log(C)}{l}+\frac{L}{l}\omega(t)+\frac{L}{l}
\end{align*}
with $L\ge 1$ denoting the constant arising in $(\alpha)$ for $\omega$ (again we have used $\omega(\sqrt{k+1})\le L\omega(\sqrt{k})+L$ for all $k\in\NN_0$).
The arising constants $C$ and $L$ are only depending on the given weights $\omega$ and $\sigma$, but not on $t\in\RR$. Thus, as $l\rightarrow+\infty$, we have shown $\limsup_{t\rightarrow\infty}\frac{\sigma(t)}{\omega(t)}=0$, i.e. $\omega\vartriangleleft\sigma$.
\end{proof}

Finally we consider the Beurling case.

\begin{theorem}\label{Gelfand-inclusionbeurling}
	Let $\omega$ and $\sigma$ be weight functions with $\omega(t)=o(t^2)$, $\sigma=o(t^2)$ as $t\rightarrow\infty$. Then the following are equivalent:
	\begin{itemize}
		\item[$(i)$] $\omega\preceq\sigma$, i.e. $\sigma(t)=O(\omega(t))$ as $t\rightarrow\infty$,

		\item[$(ii$)] $\mathcal{S}_{(\omega)}(\RR^d)\subseteq\mathcal{S}_{(\sigma)}(\RR^d)$ holds  for all dimensions $d\in\NN$ with continuous inclusion.
	\end{itemize}
More precisely, the spaces in $(ii)$ can be defined by joint or separated growth control at infinity; $(i)\Rightarrow(ii)$ is valid for general weight functions $\omega$ and $\sigma$ and for $(ii)\Rightarrow(i)$ only the inclusion for $d=1$ is required.
\end{theorem}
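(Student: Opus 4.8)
The plan is to mirror the structure of the proofs of Theorems \ref{Gelfand-inclusion3new} and \ref{Gelfand-inclusionnew4}, exploiting the sequence space isomorphism \eqref{Lambda2} which is available here precisely because both $\omega$ and $\sigma$ satisfy the extra hypothesis $o(t^2)$. As usual, $(i)\Rightarrow(ii)$ is immediate from the definitions of the classes (for general weight functions, and regardless of whether we use joint or separated growth, by the remarks following Proposition \ref{Proposition61}).

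For $(ii)\Rightarrow(i)$, I would first pass through the isomorphisms: $\Lambda_{(\omega)}\cong\mathcal{S}_{(\omega)}(\RR^d)\subseteq\mathcal{S}_{(\sigma)}(\RR^d)\cong\Lambda_{(\sigma)}$, and use this for $d=1$. The difference with the Roumieu argument is that now $\Lambda_{(\omega)}$ is a \emph{projective} limit, so to produce a test sequence inside it I must check the defining estimate for \emph{every} $j$. The natural candidate is still $\mathbf{c}:=(c_k)_{k\in\NN_0}$ with $c_k:=e^{-2\omega(\sqrt{k})}$ (or, more flexibly, $c_k:=e^{-j_0\omega(\sqrt{k}j_0)}$ for a running parameter, but a single well-chosen sequence should suffice): I would verify that $\mathbf{c}\in\Lambda_{(\omega)}$, i.e. that for each $j\in\NN$ one has $\sup_k|c_k|e^{j\omega(\sqrt{k}j)}<\infty$, which follows by iterating condition $(\alpha)$ for $\omega$ to absorb the dilation $\sqrt{k}\mapsto\sqrt{k}j$ at the cost of a constant depending only on $j$. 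Then membership $\mathbf{c}\in\Lambda_{(\sigma)}$ gives, for every $l\in\NN$, a constant $C=C_l\ge 1$ with $e^{-2\omega(\sqrt{k})}=|c_k|\le C_l e^{-l\sigma(\sqrt{k}l)}$, hence $\sigma(\sqrt{k})\le\sigma(\sqrt{k}l)\le\frac{\log C_l}{l}+\frac{2}{l}\omega(\sqrt{k})$ for all $k$.

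Finally I would interpolate from integers $\sqrt{k}$ to all real $t$: for $\sqrt{k}<t<\sqrt{k+1}$, monotonicity of $\sigma$ and one application of $(\alpha)$ for $\omega$ (using $\sqrt{k+1}\le 2\sqrt{k}$, so $\omega(\sqrt{k+1})\le L\omega(\sqrt{k})+L$) give $\sigma(t)\le\frac{\log C_l}{l}+\frac{2L}{l}\omega(t)+\frac{2L}{l}$, with $L$ the constant in $(\alpha)$ for $\omega$ and all constants independent of $t$. Since the factor $2L/l$ in front of $\omega(t)$ can be made arbitrarily small by choosing $l$ large — but for $\omega\preceq\sigma$ we only need it bounded — we conclude $\limsup_{t\to\infty}\sigma(t)/\omega(t)<+\infty$, i.e. $\omega\preceq\sigma$. (In fact the same computation with $l$ fixed large already yields the $\preceq$ statement; one does not even need to let $l\to\infty$ here, in contrast to Theorem \ref{Gelfand-inclusionnew4}.) The only genuinely delicate point is the first step, checking that the candidate sequence lies in the projective space $\Lambda_{(\omega)}$ for \emph{all} $j$ simultaneously; everything else is a routine repetition of the earlier arguments.
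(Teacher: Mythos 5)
There is a genuine gap at exactly the point you flag as ``the only genuinely delicate point'': your candidate sequence does \emph{not} lie in $\Lambda_{(\omega)}$. With $c_k:=e^{-2\omega(\sqrt{k})}$, membership in the projective space requires $\sup_k |c_k|e^{j\omega(\sqrt{k}j)}=\sup_k e^{\,j\omega(\sqrt{k}j)-2\omega(\sqrt{k})}<\infty$ for \emph{every} $j\in\NN$. Condition $(\alpha)$ cannot rescue this: it bounds $\omega(\sqrt{k}j)$ \emph{from above} by $C_j\omega(\sqrt{k})+C_j$, but the obstruction comes from below, since monotonicity already gives $j\omega(\sqrt{k}j)\ge j\omega(\sqrt{k})$, so for any $j\ge 3$ the exponent is at least $(j-2)\omega(\sqrt{k})\rightarrow+\infty$. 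The same failure occurs for the variant $c_k:=e^{-j_0\omega(\sqrt{k}j_0)}$ once $j>j_0$, and indeed no single decaying test sequence of this type can work: any $\mathbf{c}\in\Lambda_{(\omega)}$ must decay faster than $e^{-j\omega(\sqrt{k}j)}$ for all $j$ simultaneously, and then its membership in $\Lambda_{(\sigma)}$ only yields $l\sigma(\sqrt{k}l)\le\log C_l+\tau(\sqrt{k})$ with $\tau$ growing faster than every multiple of $\omega$, which does not give $\sigma=O(\omega)$. This is the essential structural difference between the Roumieu and the Beurling case, and it is why your ``routine repetition'' does not go through.

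The paper resolves this by using the \emph{continuity} of the inclusion, which is part of hypothesis $(ii)$ and which you never invoke. Continuity gives: for every $j$ there exist $l$ and $C\ge 1$ such that $\|\mathbf{c}\|_{\sigma,\frac{1}{j}}\le C\|\mathbf{c}\|_{\omega,\frac{1}{l}}$ for all $\mathbf{c}\in\Lambda_{(\omega)}$. One then feeds in the Kronecker delta sequences $\mathbf{c}^i=(\delta_{i,k})_k$, which trivially belong to $\Lambda_{(\omega)}$ (each has a single nonzero entry, so all seminorms are finite), and the point is that the constant $C$ from the continuity estimate is \emph{uniform in $i$}; mere membership $\mathbf{c}^i\in\Lambda_{(\sigma)}$ would produce constants depending on $i$ and be useless. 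Applying the estimate with $j=1$ yields $e^{\sigma(\sqrt{k})}\le Ce^{l\omega(\sqrt{k}l)}$ for all $k$, and from there your remaining steps (iterating $(\alpha)$ to absorb the dilation by $l$, and the interpolation from $\sqrt{k}$ to general $t$ via $\sqrt{k+1}\le 2\sqrt{k}$) are correct and match the paper. Your observation that one does not need to let $l\to\infty$ here, in contrast to the mixed Roumieu--Beurling case, is also correct. But as written, the proof's first and crucial step is false and must be replaced by the continuity-plus-delta-sequences argument.
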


\begin{proof}
$(i)\Rightarrow(ii)$ follows again by definition of the classes.

$(ii)\Rightarrow(i)$ By assumption and the isomorphism \eqref{Lambda2} we get now $\Lambda_{(\omega)}\cong\mathcal{S}_{(\omega)}(\RR^d)\subseteq\mathcal{S}_{(\sigma)}(\RR^d)\cong\Lambda_{(\sigma)}$ with continuous inclusion.
We use assumption $(ii)$ for the case $d=1$.
By the continuity of the inclusion we get
\begin{equation}\label{continclusion}
\forall\;j\in\NN\;\exists\;l\in\NN\;\exists\;C\ge 1\;\forall\;\mathbf{c}\in\Lambda_{(\omega)}:\;\;\;\|\mathbf{c}\|_{\sigma,\frac{1}{j}}\le C\|\mathbf{c}\|_{\omega,\frac{1}{l}}.
\end{equation}
For $i\in\NN_0$ we consider the sequence $\mathbf{c}^i$ defined by $c^i_k:=\delta_{i,k}$. It is clear that each $\mathbf{c}^i\in\Lambda_{(\omega)}$ because $\|\mathbf{c}^i\|_{\omega,\frac{1}{j}}=e^{j\omega(\sqrt{i}j)}<+\infty$ for all $i\in\NN_0$, $j\in\NN$. We apply \eqref{continclusion} to the case $j=1$ and the family $\mathbf{c}^i$, $i\in\NN_0$, and get
$$\exists\;l\in\NN\;\exists\;C\ge 1\forall\;k\in\NN_0:\;\;\;e^{\sigma(\sqrt{k})}\le Ce^{l\omega(\sqrt{k}l)}.$$
Consequently $\sigma(\sqrt{k})\le\log(C)+l\omega(\sqrt{k}l)$ follows. The iteration of property $(\alpha)$ for $\omega$ yields
$$\exists\;l\in\NN\;\exists\;C\ge 1\;\exists\;C_1\ge 1\;\forall\;k\in\NN_0:\;\;\;\sigma(\sqrt{k})\le\log(C)+lC_1\omega(\sqrt{k})+lC_1.$$
Note that all arising constants are not depending on $k$. Finally let $t\in\RR$ with $\sqrt{k}<t<\sqrt{k+1}$ for some $k\in\NN$. Then
\begin{align*}
\sigma(t)&\le\sigma(\sqrt{k+1})\le\log(C)+lC_1\omega(\sqrt{k+1})+lC_1\le\log(C)+lC_1L\omega(\sqrt{k})+lC_1L+lC_1
\\&
\le\log(C)+lC_1L\omega(t)+lC_1L+lC_1,
\end{align*}
again by applying $(\alpha)$ similarly as in the proof of Theorem \ref{Gelfand-inclusion3new} before. Since all arising constants are not depending on $t$ we have shown $\sigma(t)=O(\omega(t))$ as $t\rightarrow\infty$, i.e. $\omega\preceq\sigma$.
\end{proof}


\subsection{The general weight matrix case}\label{characterizationweightmatrix}
In this case, for a weight matrix $\mathcal{M}$
we recall the isomorphisms proved in \cite[Theorem 1]{nuclearglobal2}.
In the Roumieu case, if conditions \eqref{12L2R} and \eqref{M2'R} are satisfied, then
\beqs
\label{sequencematrixroumieu}
\mathcal{S}_{\{\mathcal{M}\}}(\RR^d)\cong
\Lambda_{\{\mathcal{M}\}}:=\{&&\!\!\mathbf{c}=(c_{\alpha})_{\alpha\in\NN^d_0}\in\CC^{\NN^d}:\\
\nonumber
&&\exists l\in\NN:\ \|\mathbf{c}\|_{\mathbf{M}^{(l)},l}:=\sup_{\alpha\in\NN^d_0}|c_{\alpha}|e^{\omega_{\mathbf{M}^{(l)}}(\frac{\sqrt{\alpha}}{l})}<+\infty\}.
\eeqs
Analogously in the Beurling case, if conditions \eqref{12L2B} and \eqref{M2'B} are satisfied, then
\beqs
\label{sequencematrixbeurling}
\mathcal{S}_{(\mathcal{M})}(\RR^d)\cong
\Lambda_{(\mathcal{M})}:=\{&&\!\!\mathbf{c}=(c_{\alpha})_{\alpha\in\NN^d_0}\in\CC^{\NN^d}:\\
\nonumber
&&\forall l\in\NN:\ \|\mathbf{c}\|_{\mathbf{M}^{(1/l)},\frac{1}{l}}:=\sup_{\alpha\in\NN^d_0}|c_{\alpha}|e^{\omega_{\mathbf{M}^{(1/l)}}(\sqrt{\alpha}l)}<+\infty\}.
\eeqs

We can thus prove similar results as in \S\ref{characterizationweightfunction}.
We start with the Roumieu case.

\begin{theorem}\label{Gelfand-inclusion2new}
	Let $\mathcal{M}:=\{\mathbf{M}^{(\lambda)}: \lambda>0\}$ and $\mathcal{N}:=\{\mathbf{N}^{(\lambda)}: \lambda>0\}$ be given weight matrices and consider the following assertions:
	\begin{itemize}
		\item[$(i)$] $\mathcal{M}\{\preceq\}\mathcal{N}$,
		
		\item[$(ii$)] $\mathcal{S}_{\{\mathcal{M}\}}(\RR^d)\subseteq\mathcal{S}_{\{\mathcal{N}\}}(\RR^d)$ holds  with continuous inclusion.
	\end{itemize}
Then we get the following: $(i)\Rightarrow(ii)$ is valid for all dimensions $d\in\NN$ and the classes in $(ii)$ can be defined by joint or separated growth at infinity. If $(ii)$ holds for the case $d=1$ and both matrices are standard log-convex with \eqref{12L2R} and \eqref{M2'R}, then $(ii)\Rightarrow(i)$ is valid, too.
\end{theorem}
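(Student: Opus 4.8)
The implication $(i)\Rightarrow(ii)$ is immediate from the definitions: if $\mathcal{M}\{\preceq\}\mathcal{N}$, then every seminorm controlling $\mathcal{S}_{\{\mathcal{M}\}}$ is dominated (up to a constant and a shift in the index $\lambda$) by a seminorm of $\mathcal{S}_{\{\mathcal{N}\}}$, and the continuity of the inclusion follows from the inductive limit structure; the equivalence of the joint and separated growth descriptions was already recorded in \S\ref{notation} (item $(ii)$ there, using \eqref{37LR} and \eqref{M2R}), or via the $L^2$-version. So the content is the converse, and I would prove it only in dimension $d=1$.

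\textbf{The converse via the sequence space isomorphisms.} Assume $(ii)$ for $d=1$, so that via \eqref{sequencematrixroumieu} (available since both matrices are standard log-convex and satisfy \eqref{12L2R}, \eqref{M2'R}) we get a continuous inclusion $\Lambda_{\{\mathcal{M}\}}\hookrightarrow\Lambda_{\{\mathcal{N}\}}$. Fix $\lambda>0$; I want to produce $\kappa>0$ with $\mathbf{M}^{(\lambda)}\preceq\mathbf{N}^{(\kappa)}$. The idea, mimicking the weight function proofs, is to feed into the inclusion the explicit test sequence $\mathbf{c}=(c_k)_{k\in\NN_0}$ with $c_k:=e^{-\omega_{\mathbf{M}^{(l)}}(\sqrt{k}/l)}$ for a suitable choice of index $l\ge\lambda$ (take $l\in\NN$ with $l\ge\lambda$, so $\mathbf{M}^{(\lambda)}\le\mathbf{M}^{(l)}$): this sequence lies in $\Lambda_{\{\mathcal{M}\}}$ by construction, hence in $\Lambda_{\{\mathcal{N}\}}$, giving some $m\in\NN$ and $C\ge 1$ with
\[
e^{-\omega_{\mathbf{M}^{(l)}}(\sqrt{k}/l)}\le C\,e^{-\omega_{\mathbf{N}^{(m)}}(\sqrt{k}/m)},\qquad k\in\NN_0,
\]
i.e. $\omega_{\mathbf{N}^{(m)}}(\sqrt{k}/m)\le \log C+\omega_{\mathbf{M}^{(l)}}(\sqrt{k}/l)$ for all $k$. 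Setting $s=\sqrt{k}$, this is a comparison of associated functions at the points $\sqrt{k}$, $k\in\NN_0$, which become dense enough at infinity; interpolating between consecutive integers and using the monotonicity of the associated functions (the weights are increasing) one upgrades it to $\omega_{\mathbf{N}^{(m)}}(t/m)\le \log C + \omega_{\mathbf{M}^{(l)}}(t/(l')) + \mathrm{const}$ for all real $t\ge 0$ and a slightly worse constant $l'$, exactly as in the proof of Theorem~\ref{Gelfand-inclusion3new}, but now \emph{without} invoking property $(\alpha)$, since we are comparing the associated functions of two \emph{fixed} log-convex sequences rather than dilating a single weight. Finally, one translates the resulting inequality $\exists A\ge1:\ \omega_{\mathbf{N}^{(m)}}(t)\le\omega_{\mathbf{M}^{(l)}}(At)+\log A$ back into a growth relation on the sequences by the Komatsu-type equivalence recalled in \S\ref{notation} ($\mathbf{M}\preceq\mathbf{N}\Leftrightarrow\omega_{\mathbf{N}}(t)\le\omega_{\mathbf{M}}(At)+B$), which applies because $\mathbf{M}^{(l)},\mathbf{N}^{(m)}\in{\mathcal{LC}}$. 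This gives $\mathbf{M}^{(l)}\preceq\mathbf{N}^{(m)}$, hence $\mathbf{M}^{(\lambda)}\preceq\mathbf{N}^{(m)}$, and since $\lambda$ was arbitrary we obtain $\mathcal{M}\{\preceq\}\mathcal{N}$.

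\textbf{Main obstacle.} The delicate point is the bookkeeping of which index gets quantified how: in the Roumieu matrix case the relation $\mathcal{M}\{\preceq\}\mathcal{N}$ reads ``$\forall\lambda\,\exists\kappa:\ \mathbf{M}^{(\lambda)}\preceq\mathbf{N}^{(\kappa)}$'', and one must check that the index $m$ produced by the continuity of the inclusion (which a priori depends on the chosen input index $l$, hence on $\lambda$) is exactly of this form — this is fine because $l$ was chosen as a function of $\lambda$ only. A secondary technicality is the passage from the discrete estimates at the points $\sqrt{k}$ to an estimate for all $t\ge0$: here one needs that consecutive values $\sqrt{k}$ and $\sqrt{k+1}$ are comparable (their ratio tends to $1$), so that monotonicity of $\omega_{\mathbf{M}^{(l)}}$ and $\omega_{\mathbf{N}^{(m)}}$ alone, with no quantitative doubling condition, suffices to close the argument — this is genuinely simpler than in Theorem~\ref{Gelfand-inclusion3new} and is the reason no analogue of $(\alpha)$ for the matrices is needed. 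Everything else is routine.
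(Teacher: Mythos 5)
Your proposal is correct and follows essentially the same route as the paper's proof: the same isomorphism \eqref{sequencematrixroumieu}, the same test sequence $c_k=e^{-\omega_{\mathbf{M}^{(l)}}(\sqrt{k}/l)}$, the same passage from the discrete estimate at the points $\sqrt{k}$ to all $t\ge 0$ using only monotonicity of the associated functions. The only (interchangeable) cosmetic differences are that the paper fixes $j\ge 2$ so that $\sqrt{k+1}/j\le\sqrt{k}$ and then applies \eqref{Prop32Komatsu} directly, whereas you absorb the factor $\sqrt{k+1}\le\sqrt{2}\sqrt{k}$ into a dilated argument and conclude via the Komatsu-type equivalence for $\mathcal{LC}$ sequences.
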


\begin{proof}
Again, $(i)\Rightarrow(ii)$ follows by the definition of the spaces.

$(ii)\Rightarrow(i)$ We use the inclusion for the dimension $d=1$ and so the matrices consist only of sequences $\mathbf{M}^{(\lambda)},\mathbf{N}^{(\lambda)}\in{\mathcal{LC}}$.

By the assumptions on $\mathcal{M}$ and $\mathcal{N}$ we can apply the isomorphism
\eqref{sequencematrixroumieu} and so $(ii)$ yields $\Lambda_{\{\mathcal{M}\}}\cong\mathcal{S}_{\{\mathcal{M}\}}(\RR^d)\subseteq\mathcal{S}_{\{\mathcal{N}\}}(\RR^d)\cong\Lambda_{\{\mathcal{N}\}}$.

We consider the sequence $\mathbf{c}:=(c_k)_{k\in\NN_0}\in\CC^{\NN}$ defined by $c_k:=e^{-\omega_{\mathbf{M}^{(j)}}(\frac{\sqrt{k}}{j})}$ with $j\in\NN$, $j\ge 2$, arbitrary but from now on fixed. So $\mathbf{c}\in\Lambda_{\{\mathcal{M}\}}$ follows by choosing $l=j$ in \eqref{sequencematrixroumieu} and this yields $\mathbf{c}\in\Lambda_{\{\mathcal{N}\}}$ as well. Thus
$$\forall\;j\in\NN\;\exists\;l\in\NN\;\exists\;C\ge 1\;\forall\;k\in\NN_0:\;\;\;e^{-\omega_{\mathbf{M}^{(j)}}(\frac{\sqrt{k}}{j})}=|c_k|\le Ce^{-\omega_{\mathbf{N}^{(l)}}(\frac{\sqrt{k}}{l})},$$
which implies $\log(C)+\omega_{\mathbf{M}^{(j)}}(\sqrt{k})\ge\log(C)+\omega_{\mathbf{M}^{(j)}}(\frac{\sqrt{k}}{j})\ge\omega_{\mathbf{N}^{(l)}}(\frac{\sqrt{k}}{l})$.

Let now $t\in\RR$ with $\sqrt{k}<t<\sqrt{k+1}$ for some $k\in\NN$. Then we estimate
\begin{align*}
\omega_{\mathbf{N}^{(l)}}\left(\frac{t}{l}\right)&\le\omega_{\mathbf{N}^{(l)}}\left(\frac{\sqrt{k+1}}{l}\right)\le\log(C)+\omega_{\mathbf{M}^{(j)}}\left(\frac{\sqrt{k+1}}{j}\right)\le\log(C)+\omega_{\mathbf{M}^{(j)}}(\sqrt{k})
\\&
\le\log(C)+\omega_{\mathbf{M}^{(j)}}(t).
\end{align*}
Here we have used that $\frac{\sqrt{k+1}}{j}\le\sqrt{k}$ is valid for any $k\in\NN$ when $j\ge 2$ and that each $\omega_{\mathbf{M}^{(j)}}$ is increasing. Finally, if $0<t<1$, then $\omega_{\mathbf{N}^{(l)}}\left(\frac{t}{l}\right)\le\omega_{\mathbf{N}^{(l)}}\left(\frac{1}{l}\right)$. Consequently, by enlarging the constant $C$ if necessary, so far we have shown
$$\forall\;j\in\NN,\;j\ge 2,\;\exists\;l\in\NN\;\exists\;C\ge 1\;\forall\;t\ge 0:\;\;\;\omega_{\mathbf{N}^{(l)}}\left(\frac{t}{l}\right)\le\log(C)+\omega_{\mathbf{M}^{(j)}}(t).$$
We use this estimate and the fact that each sequence belonging to the matrices is log-convex and normalized. Hence, by \eqref{Prop32Komatsu} we get for all $p\in\NN_0$:
\begin{align*}
M^{(j)}_p&=\sup_{t\ge 0}\frac{t^p}{\exp(\omega_{\mathbf{M}^{(j)}}(t))}\le C\sup_{t\ge 0}\frac{t^p}{\exp(\omega_{\mathbf{N}^{(l)}}(\frac{t}{l}))}=C\sup_{s\ge 0}\frac{(sl)^p}{\exp(\omega_{\mathbf{N}^{(l)}}(s))}=Cl^pN^{(l)}_p,
\end{align*}
which proves
$$\forall\;j\in\NN,\;j\ge 2,\;\exists\;l\in\NN:\;\;\;\mathbf{M}^{(j)}\preceq\mathbf{N}^{(l)}$$
and so $\mathcal{M}\{\preceq\}\mathcal{N}$ is verified. Note that the assumption $j\ge 2$ is not restricting the generality in our considerations since we are dealing with Roumieu type spaces.
\end{proof}

Next we treat the mixed situation between the Roumieu case and the Beurling case.

\begin{theorem}\label{Gelfand-inclusion2newmixed}
	Let $\mathcal{M}:=\{\mathbf{M}^{(\lambda)}: \lambda>0\}$ and $\mathcal{N}:=\{\mathbf{N}^{(\lambda)}: \lambda>0\}$ given weight matrices and consider the following assertions:
	\begin{itemize}
		\item[$(i)$] $\mathcal{M}\vartriangleleft\mathcal{N}$,
		
		\item[$(ii$)] $\mathcal{S}_{\{\mathcal{M}\}}(\RR^d)\subseteq\mathcal{S}_{(\mathcal{N})}(\RR^d)$ holds  with continuous inclusion.
	\end{itemize}
Then we get the following: $(i)\Rightarrow(ii)$ is valid, for all dimensions $d\in\NN$ and the classes in $(ii)$ can be defined by joint or separated growth at infinity. If $(ii)$ holds for the case $d=1$ and if both matrices are standard log-convex and $\mathcal{M}$ does have \eqref{12L2R} and \eqref{M2'R}, whereas $\mathcal{N}$ is required to satisfy \eqref{12L2B} and \eqref{M2'B}, then $(ii)\Rightarrow(i)$ is valid, too.
\end{theorem}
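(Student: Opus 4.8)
The plan is to run the same scheme as in Theorems \ref{Gelfand-inclusion2new} and \ref{Gelfand-inclusionnew4}: test the inclusion on explicit sequences after transporting it through the isomorphisms with sequence spaces. The mixed nature of the statement forces one to combine the $l\to\infty$ device from the proof of Theorem \ref{Gelfand-inclusionnew4} with the matrix bookkeeping from the proof of Theorem \ref{Gelfand-inclusion2new}.

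For $(i)\Rightarrow(ii)$ I would argue directly from the definitions, and this works in every dimension and for any pair of matrices: if $f\in\mathcal{S}_{\{\mathcal{M}\}}$ with $\|f\|_{\infty,\mathbf{M}^{(\lambda)},h}\le C$, then for arbitrary $h'>0$ and $\kappa>0$ the relation $\mathbf{M}^{(\lambda)}\vartriangleleft\mathbf{N}^{(\kappa)}$ provides $C_1\ge1$ with $M^{(\lambda)}_\gamma\le C_1(h'/h)^{|\gamma|}N^{(\kappa)}_\gamma$ for all $\gamma$, whence $\|f\|_{\infty,\mathbf{N}^{(\kappa)},h'}\le CC_1$; running the same estimate with $M^{(\lambda)}_\alpha M^{(\lambda)}_\beta$ in place of $M^{(\lambda)}_{\alpha+\beta}$ covers the separated-growth description recalled around \eqref{separatedgrowth}, and the chain of inequalities also yields continuity.

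For $(ii)\Rightarrow(i)$ I would pass to $d=1$, so that all $\mathbf{M}^{(\lambda)},\mathbf{N}^{(\lambda)}\in\mathcal{LC}$, and invoke \eqref{sequencematrixroumieu} (available since $\mathcal{M}$ has \eqref{12L2R}, \eqref{M2'R}) and \eqref{sequencematrixbeurling} (available since $\mathcal{N}$ has \eqref{12L2B}, \eqref{M2'B}) to turn $(ii)$ into the set inclusion $\Lambda_{\{\mathcal{M}\}}\subseteq\Lambda_{(\mathcal{N})}$. I would then fix $j\in\NN$, put $c_k:=e^{-\omega_{\mathbf{M}^{(j)}}(\sqrt{k}/j)}$ for $k\in\NN_0$, note that $\mathbf{c}\in\Lambda_{\{\mathcal{M}\}}$ (take $l=j$), and read off from $\mathbf{c}\in\Lambda_{(\mathcal{N})}$ that for every $l\in\NN$ there is $C_l\ge1$ with $\omega_{\mathbf{N}^{(1/l)}}(\sqrt{k}l)\le\log C_l+\omega_{\mathbf{M}^{(j)}}(\sqrt{k}/j)\le\log C_l+\omega_{\mathbf{M}^{(j)}}(\sqrt{k})$ for all $k\in\NN_0$. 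Passing from integer to real arguments as in Theorem \ref{Gelfand-inclusionnew4} — for $t\ge1$ choose $k\ge1$ with $\sqrt{k}\le t<\sqrt{k+1}$ and use $\sqrt{k+1}\le\sqrt{2}\sqrt{k}\le\sqrt{2}\,t$ together with the monotonicity of the associated functions, while the range $0\le t<1$ is trivial after enlarging the constant — and then setting $s=tl$, I would arrive at
\[
\forall\,l\in\NN\;\exists\,\widetilde{C}_l\ge1\;\forall\,s\ge0:\quad \omega_{\mathbf{N}^{(1/l)}}(s)\le\log\widetilde{C}_l+\omega_{\mathbf{M}^{(j)}}\bigl(\tfrac{\sqrt{2}}{l}\,s\bigr).
\]

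To conclude I would fix $l_0\in\NN$, use $\omega_{\mathbf{N}^{(1/l_0)}}\le\omega_{\mathbf{N}^{(1/l)}}$ for $l\ge l_0$, and observe that given any $A>0$ the choice of $l\ge l_0$ with $\sqrt{2}/l\le A$ yields $\omega_{\mathbf{N}^{(1/l_0)}}(s)\le\log\widetilde{C}_l+\omega_{\mathbf{M}^{(j)}}(As)$ for all $s\ge0$; by the equivalence between $\vartriangleleft$ and the corresponding inequality between associated functions recalled in Section \ref{notation}, this is exactly $\mathbf{M}^{(j)}\vartriangleleft\mathbf{N}^{(1/l_0)}$. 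Since $j$ and $l_0$ are arbitrary and both matrices increase in their index, for given $\lambda,\kappa>0$ I would pick $j\in\NN$ with $j\ge\lambda$ and $l_0\in\NN$ with $1/l_0\le\kappa$, so that $(M^{(\lambda)}_p/N^{(\kappa)}_p)^{1/p}\le(M^{(j)}_p/N^{(1/l_0)}_p)^{1/p}\to0$, i.e. $\mathbf{M}^{(\lambda)}\vartriangleleft\mathbf{N}^{(\kappa)}$, which is $\mathcal{M}\vartriangleleft\mathcal{N}$. The hard part will be this last bookkeeping: for a fixed Beurling index $l$ the dilation factor $\sqrt{2}/l$ is useless, and the \emph{strict} relation $\vartriangleleft$ is recovered only by using the freedom to let $l\to\infty$ in the Beurling parameter while keeping the Roumieu parameter $j$ fixed — the mechanism already present in Theorem \ref{Gelfand-inclusionnew4} — interleaved with the monotonicity reductions on both matrices, so as not to end up merely with $\mathcal{M}\{\preceq\}\mathcal{N}$.
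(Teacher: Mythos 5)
Your proposal is correct and follows essentially the same route as the paper: test the sequence-space inclusion $\Lambda_{\{\mathcal{M}\}}\subseteq\Lambda_{(\mathcal{N})}$ on $c_k=e^{-\omega_{\mathbf{M}^{(j)}}(\sqrt{k}/j)}$, pass from integer to real arguments, and recover the strict relation $\vartriangleleft$ by letting the Beurling index $l\to\infty$ while the Roumieu index $j$ stays fixed. The only cosmetic differences are that the paper absorbs the $\sqrt{k+1}$ versus $\sqrt{k}$ discrepancy by taking $j\ge 2$ (so $\sqrt{k+1}/j\le\sqrt{k}$) instead of your dilation by $\sqrt{2}$, and it converts the associated-function estimate back into $\mathbf{M}^{(j)}\vartriangleleft\mathbf{N}^{(1/i)}$ by an explicit computation with \eqref{Prop32Komatsu} rather than by citing the equivalence from Section~\ref{notation}.
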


\begin{proof}
Again, $(i)\Rightarrow(ii)$ follows by the definition of the spaces.\vspace{6pt}

$(ii)\Rightarrow(i)$ We use this inclusion for $d=1$.
By the assumptions on $\mathcal{M}$ and $\mathcal{N}$
and the  isomorphisms \eqref{sequencematrixroumieu}-\eqref{sequencematrixbeurling} we have that $(ii)$ yields $\Lambda_{\{\mathcal{M}\}}\cong\mathcal{S}_{\{\mathcal{M}\}}(\RR^d)\subseteq\mathcal{S}_{(\mathcal{N})}(\RR^d)\cong\Lambda_{(\mathcal{N})}$.
As in the previous proof we consider the sequence $\mathbf{c}:=(c_k)_{k\in\NN_0}\in\CC^{\NN}$ defined by $c_k:=e^{-\omega_{\mathbf{M}^{(j)}}(\frac{\sqrt{k}}{j})}$ with $j\in\NN$, $j\ge 2$, arbitrary but from now on fixed. So $\mathbf{c}\in\Lambda_{\{\mathcal{M}\}}$ by choosing $l=j$ and now the assumption yields $\mathbf{c}\in\Lambda_{(\mathcal{N})}$ as well. Thus we obtain
$$\forall\;j\in\NN,\;j\ge 2,\;\forall\;l\in\NN\;\exists\;C\ge 1\;\forall\;k\in\NN_0:\;\;\;e^{-\omega_{\mathbf{M}^{(j)}}(\frac{\sqrt{k}}{j})}=|c_k|\le Ce^{-\omega_{\mathbf{N}^{(1/l)}}(\sqrt{k}l)},$$
which gives $\log(C)+\omega_{\mathbf{M}^{(j)}}(\sqrt{k})\ge\log(C)+\omega_{\mathbf{M}^{(j)}}(\frac{\sqrt{k}}{j})\ge\omega_{\mathbf{N}^{(1/l)}}(\sqrt{k}l)$ and note that the arising constant $C$ is depending on $l$ and $j$.

Let now $t\in\RR$ with $\sqrt{k}<t<\sqrt{k+1}$ for some $k\in\NN$. Then
\begin{align*}
\omega_{\mathbf{N}^{(1/l)}}(tl)&\le\omega_{\mathbf{N}^{(1/l)}}(\sqrt{k+1}l)\le\log(C)+\omega_{\mathbf{M}^{(j)}}\left(\frac{\sqrt{k+1}}{j}\right)\le\log(C)+\omega_{\mathbf{M}^{(j)}}(\sqrt{k})
\\&
\le\log(C)+\omega_{\mathbf{M}^{(j)}}(t),
\end{align*}
as in the proof of Theorem \ref{Gelfand-inclusion2new}. Finally, if $0<t<1$, then $\omega_{\mathbf{N}^{(1/l)}}(tl)\le\omega_{\mathbf{N}^{(1/l)}}(l)$. Consequently, by enlarging the constant $C$ if necessary, so far we have shown
$$\forall\;j\in\NN\;j\ge 2,\;\forall\;l\in\NN\;\exists\;C\ge 1\;\forall\;t\ge 0:\;\;\;\omega_{\mathbf{N}^{(1/l)}}(tl)\le\log(C)+\omega_{\mathbf{M}^{(j)}}(t).$$
We use this estimate and the fact that each sequence belonging to the matrices is log-convex and normalized, hence by \eqref{Prop32Komatsu} we get for all $p\in\NN_0$ and $i\le l$:
\begin{align*}
M^{(j)}_p&=\sup_{t\ge 0}\frac{t^p}{\exp(\omega_{\mathbf{M}^{(j)}}(t))}\le C\sup_{t\ge 0}\frac{t^p}{\exp(\omega_{\mathbf{N}^{(1/l)}}(tl))}=C\sup_{s\ge 0}\frac{(s/l)^p}{\exp(\omega_{\mathbf{N}^{(1/l)}}(s))}=C\frac{1}{l^p}N^{(1/l)}_p
\\&
\le C\frac{1}{l^p}N^{(1/i)}_p.
\end{align*}
This estimate proves $\mathbf{M}^{(j)}\vartriangleleft\mathbf{N}^{(1/i)}$ for all $i,j\in\NN$, $j\ge 2$: Let $i$ and $j\ge 2$ be arbitrary but fixed, then we get by the previous computations that $\left(\frac{M^{(j)}_p}{N^{(1/i)}_p}\right)^{1/p}\le C_l^{1/p}\frac{1}{l}$ for all $l\ge i$ and $p\in\NN$. Assumption $j\ge 2$ is not restricting since the matrix $\mathcal{M}$ is related to Roumieu-type conditions and small indices can be omitted without changing the corresponding function class. Thus we have verified $\mathcal{M}\vartriangleleft\mathcal{N}$.
\end{proof}

Finally we treat the general weight matrix case in the Beurling-type setting.

\begin{theorem}\label{Gelfand-inclusionbeurlingmatrix}
	Let $\mathcal{M}:=\{\mathbf{M}^{(\lambda)}: \lambda>0\}$ and $\mathcal{N}:=\{\mathbf{N}^{(\lambda)}: \lambda>0\}$ be given and consider the following assertions:
	\begin{itemize}
		\item[$(i)$] $\mathcal{M}(\preceq)\mathcal{N}$,
		
		\item[$(ii$)] $\mathcal{S}_{(\mathcal{M})}(\RR^d)\subseteq\mathcal{S}_{(\mathcal{N})}(\RR^d)$ holds  with continuous inclusion.
	\end{itemize}
Then we get the following: $(i)\Rightarrow(ii)$ is valid for all dimensions $d\in\NN$ and the classes in $(ii)$ can be defined by joint or separated growth at infinity. If $(ii)$ holds for the case $d=1$ both matrices are standard log-convex with \eqref{12L2B} and \eqref{M2'B}, then $(ii)\Rightarrow(i)$ is valid, too.
\end{theorem}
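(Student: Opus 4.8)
The plan is to follow the same strategy used in the proofs of Theorems \ref{Gelfand-inclusion2new} and \ref{Gelfand-inclusion2newmixed}, now adapting it to the Beurling--Beurling situation. The implication $(i)\Rightarrow(ii)$ is immediate from the definition of the matrix spaces: if $\mathcal{M}(\preceq)\mathcal{N}$, then for every $\lambda>0$ there is $\kappa>0$ with $\mathbf{M}^{(\kappa)}\preceq\mathbf{N}^{(\lambda)}$, and the corresponding inequality between seminorms $\|\cdot\|_{\infty,\mathbf{N}^{(\lambda)},h}$ and $\|\cdot\|_{\infty,\mathbf{M}^{(\kappa)},h'}$ yields the continuous inclusion; the fact that joint and separated growth descriptions agree is the same observation as before (using $(vii)$-type conditions, or directly).

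For the converse $(ii)\Rightarrow(i)$, I would pass through the sequence-space isomorphism \eqref{sequencematrixbeurling}, valid since both matrices are standard log-convex and satisfy \eqref{12L2B} and \eqref{M2'B}, obtaining a continuous inclusion $\Lambda_{(\mathcal{M})}\hookrightarrow\Lambda_{(\mathcal{N})}$ in dimension $d=1$. As in the proof of Theorem \ref{Gelfand-inclusionbeurling}, I would test continuity on the unit-vector sequences $\mathbf{c}^i=(\delta_{i,k})_{k\in\NN_0}$: continuity gives, for each $j\in\NN$, some $l\in\NN$ and $C\ge1$ with $\|\mathbf{c}^i\|_{\mathbf{N}^{(1/l)},1/l}\le C\|\mathbf{c}^i\|_{\mathbf{M}^{(1/j)},1/j}$ for all $i$, i.e. $e^{\omega_{\mathbf{N}^{(1/l)}}(\sqrt{i}\,l)}\le Ce^{\omega_{\mathbf{M}^{(1/j)}}(\sqrt{i}\,j)}$, hence
$$\forall\,j\in\NN\ \exists\,l\in\NN\ \exists\,C\ge1\ \forall\,k\in\NN_0:\quad \omega_{\mathbf{N}^{(1/l)}}(\sqrt{k}\,l)\le\log C+\omega_{\mathbf{M}^{(1/j)}}(\sqrt{k}\,j).$$
Interpolating over $t\in(\sqrt{k},\sqrt{k+1})$ and using monotonicity of $\omega_{\mathbf{N}^{(1/l)}}$ together with property $(\alpha)$ for $\omega_{\mathbf{M}^{(1/j)}}$ (exactly as in Theorem \ref{Gelfand-inclusion2new}; note one may freely assume $j\ge2$ on the Beurling--Roumieu-free side, but here both indices shrink, so instead one bounds $\sqrt{k+1}\le2\sqrt{k}$ directly and absorbs the $(\alpha)$-constant), one upgrades this to an estimate valid for all real $t\ge0$, of the form $\omega_{\mathbf{N}^{(1/l)}}(tl)\le\log(C')+C_1\,\omega_{\mathbf{M}^{(1/j)}}(t)$. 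Finally I would convert this back to a sequence comparison by \eqref{Prop32Komatsu}: taking suprema over $t$, $N^{(1/l)}_p\ge$ (up to the constant $C'$ and the dilation $l$) $\cdots$ yields $\mathbf{M}^{(1/j)}\preceq\mathbf{N}^{(1/l)}$ after a harmless rescaling of indices; since $j$ was arbitrary this is precisely $\mathcal{M}(\preceq)\mathcal{N}$ in the quantifier order ``$\forall\lambda\,\exists\kappa$'' appropriate to the Beurling setting.

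The main obstacle, and the only place where care is genuinely needed, is the bookkeeping of the quantifier pattern: in the Beurling case the defining condition $\mathcal{M}(\preceq)\mathcal{N}$ reads ``$\forall\lambda>0\ \exists\kappa>0:\ \mathbf{M}^{(\kappa)}\preceq\mathbf{N}^{(\lambda)}$'', and one must check that the continuity estimate \eqref{continclusion}-style (``$\forall j\ \exists l$'') produced by testing on the $\mathbf{c}^i$ delivers exactly this — in particular that the roles of $\mathbf{M}$ and $\mathbf{N}$, and of ``small $\lambda$'', come out on the correct sides after the $\omega$-to-sequence translation via \eqref{Prop32Komatsu}. Once the quantifiers are tracked correctly, the analytic content (the interpolation step and the use of $(\alpha)$) is routine and identical to the earlier proofs, so I would simply refer back to the computation in Theorem \ref{Gelfand-inclusion2new} rather than repeat it.
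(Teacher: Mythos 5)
Your overall strategy is the paper's: pass to $\Lambda_{(\mathcal{M})}\hookrightarrow\Lambda_{(\mathcal{N})}$ via \eqref{sequencematrixbeurling}, test continuity on the unit vectors $\mathbf{c}^i$, extend the resulting estimate from $t=\sqrt{k}$ to all $t\ge0$, and translate back through \eqref{Prop32Komatsu}. However, the one step you yourself single out as the only delicate point --- the quantifier bookkeeping --- is exactly where your write-up goes wrong. Continuity of a linear map into the projective limit $\Lambda_{(\mathcal{N})}$ says that every seminorm of the \emph{target} is dominated by some seminorm of the \emph{source}, i.e.
$$\forall\;j\in\NN\;\exists\;l\in\NN\;\exists\;C\ge 1\;\forall\;\mathbf{c}\in\Lambda_{(\mathcal{M})}:\;\;\;\|\mathbf{c}\|_{\mathbf{N}^{(1/j)},\frac{1}{j}}\le C\,\|\mathbf{c}\|_{\mathbf{M}^{(1/l)},\frac{1}{l}}.$$
You have written the reverse, namely $\|\mathbf{c}^i\|_{\mathbf{N}^{(1/l)},1/l}\le C\|\mathbf{c}^i\|_{\mathbf{M}^{(1/j)},1/j}$ with $j$ universally quantified on the $\mathcal{M}$-side and $l$ existentially quantified on the $\mathcal{N}$-side. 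That is not what continuity gives, and after running it through \eqref{Prop32Komatsu} it yields ``for every $\kappa=1/j$ there is some $\lambda=1/l$ with $\mathbf{M}^{(\kappa)}\preceq\mathbf{N}^{(\lambda)}$'', i.e.\ the pattern $\forall\kappa\,\exists\lambda$, which is \emph{not} the definition $\forall\lambda\,\exists\kappa$ of $\mathcal{M}(\preceq)\mathcal{N}$; so your closing sentence asserts something that does not follow from the estimate you derived. With the continuity estimate stated correctly, the same computation gives, for every $j$, some $l$ with $\mathbf{M}^{(1/l)}\preceq\mathbf{N}^{(1/j)}$, which is exactly $\mathcal{M}(\preceq)\mathcal{N}$ (for arbitrary $\lambda>0$ choose $j$ with $1/j\le\lambda$ and use $\mathbf{N}^{(1/j)}\le\mathbf{N}^{(\lambda)}$).

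A second, related point: you should not invoke property $(\alpha)$ for $\omega_{\mathbf{M}^{(1/j)}}$ here. It is not among the hypotheses (only standard log-convexity, \eqref{12L2B} and \eqref{M2'B} are assumed), and Theorem \ref{Gelfand-inclusion2new} does not use it either. More importantly, iterating $(\alpha)$ produces a \emph{multiplicative} constant $C_1$ in front of $\omega_{\mathbf{M}^{(1/j)}}(t)$, and an estimate of the form $\omega_{\mathbf{N}}(t)\le\log C'+C_1\,\omega_{\mathbf{M}}(ct)$ with $C_1>1$ does not translate into $\mathbf{M}\preceq\mathbf{N}$ via \eqref{Prop32Komatsu}: only a dilation of the argument and an additive constant can be absorbed there. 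The correct device, which your parenthetical self-correction gestures at, is the purely multiplicative one: bound $\sqrt{k+1}\le 2\sqrt{k}$, absorb the factor $2$ into the argument as $\omega_{\mathbf{M}^{(1/l)}}(\sqrt{k}\,2l)$ using monotonicity, and remove it at the end by the substitution $s=2lt/j$ in \eqref{Prop32Komatsu}, which only costs the harmless geometric factor $(j/(2l))^p$. Once these two points are repaired, the argument is the paper's proof.
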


\begin{proof}
Again, $(i)\Rightarrow(ii)$ follows by the definition of the spaces.\vspace{6pt}

$(ii)\Rightarrow(i)$ We use this inclusion for $d=1$.
By the assumptions on $\mathcal{M}$ and $\mathcal{N}$ and the isomorphism \eqref{sequencematrixbeurling} we have that $(ii)$ yields $\Lambda_{(\mathcal{M})}\cong\mathcal{S}_{(\mathcal{M})}(\RR^d)\subseteq\mathcal{S}_{(\mathcal{N})}(\RR^d)\cong\Lambda_{(\mathcal{N})}$  with continuous inclusion.
Now we proceed analogously as in the proof of Theorem \ref{Gelfand-inclusionbeurling} before.
By the continuity of the inclusion we get
\begin{equation}\label{continclusionmatrix}
\forall\;j\in\NN\;\exists\;l\in\NN\;\exists\;C\ge 1\;\forall\;\mathbf{c}\in\Lambda_{(\mathcal{M})}:\;\;\;\|\mathbf{c}\|_{\mathbf{N}^{(1/j)},\frac{1}{j}}\le C\|\mathbf{c}\|_{\mathbf{M}^{(1/l)},\frac{1}{l}}.
\end{equation}
For $i\in\NN_0$ we consider again the sequence $\mathbf{c}^i$ defined by $c^i_k:=\delta_{i,k}$. It is clear that each $\mathbf{c}^i\in\Lambda_{(\mathcal{M})}$ because $\|\mathbf{c}^i\|_{\mathbf{M}^{(1/j)},\frac{1}{j}}=e^{\omega_{\mathbf{M}^{(1/j)}}(\sqrt{i}j)}<+\infty$ for all $i\in\NN_0$ and $j\in\NN$. We apply \eqref{continclusionmatrix} to the family $\mathbf{c}^i$, $i\in\NN_0$, and get
$$\forall\;j\in\NN\;\exists\;l\in\NN\;\exists\;C\ge 1\forall\;k\in\NN_0:\;\;\;e^{\omega_{\mathbf{N}^{(1/j)}}(\sqrt{k}j)}\le Ce^{\omega_{\mathbf{M}^{(1/l)}}(\sqrt{k}l)},$$
consequently $\omega_{\mathbf{N}^{(1/j)}}(\sqrt{k}j)\le\log(C)+\omega_{\mathbf{M}^{(1/l)}}(\sqrt{k}l)\le\log(C)+\omega_{\mathbf{M}^{(1/l)}}(\sqrt{k}2l)$ follows because each $\omega_{\mathbf{M}^{(j)}}$ is increasing.

Let now $t\in\RR$ with $\sqrt{k}<t<\sqrt{k+1}$ for some $k\in\NN$. Then we estimate by
\begin{align*}
\omega_{\mathbf{N}^{(1/j)}}(tj)&\le\omega_{\mathbf{N}^{(1/j)}}(\sqrt{k+1}j)\le\log(C)+\omega_{\mathbf{M}^{(1/l)}}(\sqrt{k+1}l)\le\log(C)+\omega_{\mathbf{M}^{(1/l)}}(\sqrt{k}2l)
\\&
\le\log(C)+\omega_{\mathbf{M}^{(1/l)}}(t2l).
\end{align*}
Here we have used that $\sqrt{k+1}\le2\sqrt{k}$.

If $t\in\RR$ with $0<t<1$, then $\omega_{\mathbf{N}^{(1/j)}}(tj)\le\omega_{\mathbf{N}^{(1/j)}}(j)$. Consequently, by enlarging the constant $C$ if necessary, so far we have shown
$$\forall\;j\in\NN\;\exists\;l\in\NN\;\exists\;C\ge 1\;\forall\;t\ge 0:\;\;\;\omega_{\mathbf{N}^{(1/j)}}(t)\le\log(C)+\omega_{\mathbf{M}^{(1/l)}}(t2l/j).$$
Finally, by using this and again \eqref{Prop32Komatsu} we get for all $p\in\NN_0$:
\begin{align*}
N^{(1/j)}_p&=\sup_{t\ge 0}\frac{t^p}{\exp(\omega_{\mathbf{N}^{(1/j)}}(t))}\ge\frac{1}{C}\sup_{t\ge 0}\frac{t^p}{\exp(\omega_{\mathbf{M}^{(1/l)}}(t2l/j))}=\frac{1}{C}\sup_{s\ge 0}\frac{(sj/(2l))^p}{\exp(\omega_{\mathbf{M}^{(1/l)}}(s))}
\\&
=\frac{1}{C}\left(\frac{j}{2l}\right)^pM^{(1/l)}_p,
\end{align*}
which proves $\mathbf{M}^{(1/l)}\preceq\mathbf{N}^{(1/j)}$ and so $\mathcal{M}(\preceq)\mathcal{N}$ is verified.
\end{proof}

\subsection{The single weight sequence case}\label{characterizationweightsquence}
It is straight-forward to obtain the analogous results for Theorems \ref{Gelfand-inclusion2new}, \ref{Gelfand-inclusion2newmixed} and \ref{Gelfand-inclusionbeurlingmatrix} in the single weight sequence setting and we get the following characterization:

\begin{theorem}\label{Gelfandweightsequence}
Let $\mathbf{M},\mathbf{N}\in{\mathcal{LC}}$ be given such that both satisfy \eqref{M2prime}.

\begin{itemize}
\item[$(I)$] Let $\mathbf{M}$ and $\mathbf{N}$ satisfy \eqref{12L2R} and consider the following assertions:
\begin{itemize}
\item[$(i)$] $\mathbf{M}\preceq\mathbf{N}$,

\item[$(ii)$] $\mathcal{S}_{\{\mathbf{M}\}}(\RR^d)\subseteq\mathcal{S}_{\{\mathbf{N}\}}(\RR^d)$ with continuous inclusion.
\end{itemize}

\item[$(II)$] Let $\mathbf{M}$  and  $\mathbf{N}$ satisfy  \eqref{12L2R} and \eqref{12L2B} respectively, and consider the following assertions:
\begin{itemize}
\item[$(i)$] $\mathbf{M}\vartriangleleft\mathbf{N}$,

\item[$(ii)$] $\mathcal{S}_{\{\mathbf{M}\}}(\RR^d)\subseteq\mathcal{S}_{(\mathbf{N})}(\RR^d)$ with continuous inclusion.
\end{itemize}

\item[$(III)$] Let $\mathbf{M}$ and $\mathbf{N}$ satisfy \eqref{12L2B} and consider the following assertions:
\begin{itemize}
\item[$(i)$] $\mathbf{M}\preceq\mathbf{N}$,

\item[$(ii)$] $\mathcal{S}_{(\mathbf{M})}(\RR^d)\subseteq\mathcal{S}_{(\mathbf{N})}(\RR^d)$ with continuous inclusion.
\end{itemize}
\end{itemize}
Then all implications $(i)\Rightarrow(ii)$ are valid for arbitrary sequences $\mathbf{M},\mathbf{N}\in\RR_{>0}^{\NN}$, for all dimensions $d\in\NN$ and the spaces can be defined in terms of a joint or separated growth at infinity. If $(ii)$ holds for $d=1$, then the implications $(ii)\Rightarrow(i)$ are valid, too.
\end{theorem}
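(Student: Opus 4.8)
The strategy is to reduce each of the three statements $(I)$, $(II)$, $(III)$ to the corresponding weight matrix result, namely Theorems \ref{Gelfand-inclusion2new}, \ref{Gelfand-inclusion2newmixed}, \ref{Gelfand-inclusionbeurlingmatrix}, by viewing a single weight sequence $\mathbf{M}\in{\mathcal{LC}}$ as the \emph{constant} weight matrix $\mathcal{M}:=\{\mathbf{M}^{(\lambda)}\}_{\lambda>0}$ with $\mathbf{M}^{(\lambda)}:=\mathbf{M}$ for every $\lambda>0$. With this identification one has $\mathcal{S}_{\{\mathbf{M}\}}=\mathcal{S}_{\{\mathcal{M}\}}$ and $\mathcal{S}_{(\mathbf{M})}=\mathcal{S}_{(\mathcal{M})}$ directly from the definitions of the matrix spaces as unions/intersections over $\lambda$. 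Moreover, for constant matrices the growth relations collapse: $\mathcal{M}\{\preceq\}\mathcal{N}$ becomes $\mathbf{M}\preceq\mathbf{N}$, $\mathcal{M}(\preceq)\mathcal{N}$ also becomes $\mathbf{M}\preceq\mathbf{N}$, and $\mathcal{M}\vartriangleleft\mathcal{N}$ becomes $\mathbf{M}\vartriangleleft\mathbf{N}$. So the content of Theorem \ref{Gelfandweightsequence} is essentially the specialization of the matrix theorems, and the main task is only to check that the hypotheses of the matrix theorems hold for constant matrices under the stated single-sequence assumptions.

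\textbf{Step 1: the implications $(i)\Rightarrow(ii)$.} These follow directly from the definition of the norms $\|\cdot\|_{\infty,\mathbf{M},h}$: if $\mathbf{M}\preceq\mathbf{N}$, say $M_p\le C^pN_p$, then $\|f\|_{\infty,\mathbf{N},h}\le\|f\|_{\infty,\mathbf{M},Ch}$ up to trivial adjustments, which gives the continuous inclusion in cases $(I)$ and $(III)$; in case $(II)$, $\mathbf{M}\vartriangleleft\mathbf{N}$ gives that for every $h>0$ there is $C_h$ with $M_p\le C_hh^pN_p$, hence $\|f\|_{\infty,\mathbf{N},h}\le C_h\|f\|_{\infty,\mathbf{M},h}$ for all $h$, which is exactly what is needed to pass from the Roumieu space of $\mathbf{M}$ into the Beurling space of $\mathbf{N}$. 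No structural hypotheses on the sequences are needed here, only that they are strictly positive; the fact that the spaces may be defined by joint or separated growth (under \eqref{M2} resp.\ \eqref{M1}) does not affect these elementary inclusions. I would simply state this and refer to the definition of the spaces, exactly as in the matrix-valued proofs.

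\textbf{Step 2: the implications $(ii)\Rightarrow(i)$ for $d=1$.} Here I invoke the matrix theorems. For $(I)$: since $\mathbf{M},\mathbf{N}\in{\mathcal{LC}}$ they are standard log-convex as constant matrices; assumption \eqref{12L2R} is imposed, and \eqref{M2'R} for the constant matrix is precisely \eqref{M2prime} (derivation closedness), which is part of the hypotheses of Theorem \ref{Gelfandweightsequence}. Thus Theorem \ref{Gelfand-inclusion2new} applies and yields $\mathcal{M}\{\preceq\}\mathcal{N}$, i.e.\ $\mathbf{M}\preceq\mathbf{N}$. The cases $(II)$ and $(III)$ are handled identically using Theorems \ref{Gelfand-inclusion2newmixed} and \ref{Gelfand-inclusionbeurlingmatrix} respectively: one only has to note that \eqref{12L2B} is assumed where needed and that \eqref{M2'B} for a constant matrix is again just \eqref{M2prime}. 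In all three cases the conclusion $\mathbf{M}\vartriangleleft\mathbf{N}$ (resp.\ $\mathbf{M}\preceq\mathbf{N}$) follows by reading off what the matrix relation means when both matrices are constant.

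\textbf{Expected main obstacle.} The only genuinely delicate point is bookkeeping: making sure that each hypothesis \eqref{12L2R}, \eqref{12L2B}, \eqref{M2'R}, \eqref{M2'B} in the matrix theorems is met by the constant matrix associated to $\mathbf{M}$ (resp.\ $\mathbf{N}$) under the stated single-sequence hypotheses, and that none of the matrix theorems quietly requires something (e.g.\ moderate growth, or \eqref{37LR}) that a sequence with only \eqref{M1} and \eqref{M2prime} need not have. In particular one should double-check that the isomorphisms \eqref{sequencematrixroumieu}, \eqref{sequencematrixbeurling} hold for the constant matrix under exactly \eqref{12L2R}+\eqref{M2'R} (resp.\ \eqref{12L2B}+\eqref{M2'B}), which they do by item $(i)$ after Lemma \ref{Lemma61} together with $\mathbf{M}\in{\mathcal{LC}}$. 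Once this verification is in place the proof is a one-paragraph translation; alternatively, one could give a fully self-contained argument by repeating the proofs of Theorems \ref{Gelfand-inclusion2new}--\ref{Gelfand-inclusionbeurlingmatrix} with the test sequence $c_k:=e^{-\omega_{\mathbf{M}}(\sqrt{k}/j)}$ and using \eqref{Prop32Komatsu} for the single sequences $\mathbf{M},\mathbf{N}$, but the reduction to the matrix case is cleaner and is what I would write.
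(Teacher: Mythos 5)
Your proposal is correct and coincides with the paper's intended argument: the paper gives no written proof, stating only that the single-sequence case is "straight-forward" from Theorems \ref{Gelfand-inclusion2new}, \ref{Gelfand-inclusion2newmixed} and \ref{Gelfand-inclusionbeurlingmatrix}, and your reduction via the constant matrix $\mathbf{M}^{(\lambda)}:=\mathbf{M}$ (together with the observation that \eqref{M2'R}/\eqref{M2'B} for a constant matrix reduce to \eqref{M2prime}, and that $\{\preceq\}$, $(\preceq)$, $\vartriangleleft$ collapse to $\preceq$, $\preceq$, $\vartriangleleft$) is exactly the verification the authors leave to the reader.
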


\begin{remark}\label{anisoremark}
If we consider the {\itshape isotropic} setting, so we set $M^{(\lambda)}_{\alpha}:=M^{(\lambda)}_{|\alpha|}$ (resp. $M_{\alpha}:=M_{|\alpha|}$) for any $\lambda>0$ and $\alpha\in\NN^d_0$, then in all results from Section \ref{characterizationweightmatrix} and Section \ref{characterizationweightsquence} we have that $(ii)\Rightarrow(i)$ is valid if $(ii)$ holds for some/any dimension $d\in\NN$. Similarly this applies to the results listed in Section~\ref{alternativesection} as well.
For the analogous results in the anisotropic setting we refer to \cite{BJOS-cvxminorant}.
\end{remark}


\section{Comparison of classes defined by weight sequences and weight functions}\label{comparisonofGSclasses}
Gathering the information from the previous section we are now able to prove the following results which are analogous to the statements obtained in  \cite{BonetMeiseMelikhov07} and \cite{compositionpaper}
for the spaces $\mathcal{E}_{\{\mathbf{M}\}}, \mathcal{E}_{(\mathbf{M})},
\mathcal{E}_{\{\omega\}}, \mathcal{E}_{(\omega)}$ (cf. also
 \cite[Remark 4]{nuclearglobal2}).


\begin{theorem}\label{comparsion}
Let $\omega$ be a weight function with associated weight matrix $\mathcal{M}_{\omega}:=\{\mathbf{W}^{(\lambda)}: \lambda>0\}$. Then the following are equivalent:
\begin{itemize}
\item[$(i)$] $\omega$  satisfies \eqref{omega6},

\item[$(ii)$] there  exists $\mathbf{M}\in{\mathcal{LC}}$ such that:
\begin{itemize}
\item[$(ii.1)$] $\mathbf{M}$ satisfies \eqref{M2}, hence \eqref{M2prime};

\item[$(ii.2)$] $\mathbf{M}$ satisfies \eqref{12L2R};

\item[$(ii.3)$] $\omega_{\mathbf{M}}$ satisfies $(\alpha)$;

\item[$(ii.4)$] for any $d\in\NN$, if $M_{\alpha}:=M_{|\alpha|}$, $\alpha\in\NN^d_0$, then    \begin{equation}\label{comparisonequ}
    \mathcal{S}_{\{\omega\}}(\RR^d)=\mathcal{S}_{\{\mathbf{M}\}}(\RR^d),
    \end{equation}
   as topological vector spaces.
\end{itemize}
\end{itemize}
The analogous result holds true for the Beurling case as well, when considering (in addition) $\omega(t)=o(t^2)$ as $t\rightarrow\infty$ for the weight function $\omega$ and condition \eqref{12L2B}  instead of \eqref{12L2R}.

In both cases we can take  $\mathbf{M}\equiv\mathbf{W}^{(\lambda)}$ for some/each $\lambda>0$ in $(ii)$.

When the above equivalence holds true the space can be defined by separate or joint growth at infinity.

\end{theorem}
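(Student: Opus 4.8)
The plan is to realize the sequence $\mathbf{M}$ required in $(ii)$ as one member $\mathbf{W}^{(\lambda)}$ of the matrix $\mathcal{M}_\omega$, exploiting that \eqref{omega6} forces $\mathcal{M}_\omega$ to be constant, and conversely to recover \eqref{omega6} from the equality of classes by feeding it into the characterizations of inclusions proved in Section~\ref{characterizationsection}. Throughout, condition $(\beta)$ on $\omega$ (resp.\ $\omega(t)=o(t^2)$ in the Beurling case) is what makes the matrix-side conditions \eqref{12L2R}/\eqref{12L2B} available, via Lemma~\ref{Lemma63revisited}.

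For $(i)\Rightarrow(ii)$ I would fix any $\lambda>0$ and set $\mathbf{M}:=\mathbf{W}^{(\lambda)}$. That $\mathbf{M}\in{\mathcal{LC}}$ follows from $(i)$ and $(ii)$ of Lemma~\ref{Lemma61} (normalization and one-dimensional log-convexity), together with $(M_p)^{1/p}=e^{\varphi^*_\omega(\lambda p)/(\lambda p)}\to+\infty$ (use $\varphi^*_\omega(s)\ge ts-\varphi_\omega(t)$ for every $t\ge0$). By \cite[Lemma 5.9]{compositionpaper}, \eqref{omega6} gives $\mathbf{W}^{(\kappa)}\approx\mathbf{W}^{(\mu)}$ for all $\kappa,\mu>0$, so $\mathcal{M}_\omega$ is constant; combined with $(iv)$ of Lemma~\ref{Lemma61} this yields $M_{\alpha+\beta}=W^{(\lambda)}_{\alpha+\beta}\le W^{(2\lambda)}_\alpha W^{(2\lambda)}_\beta\le A^{|\alpha+\beta|}M_\alpha M_\beta$, i.e.\ $(ii.1)$ (hence \eqref{M2prime}). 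Since $\omega$ has $(\beta)$, Lemma~\ref{Lemma63revisited} gives $\mathbf{G}^{1/2}\preceq\mathbf{W}^{(\lambda)}$, and then $\alpha^{\alpha/2}\le|\alpha|^{|\alpha|/2}\le e^{|\alpha|/2}|\alpha|!^{1/2}\le e^{|\alpha|/2}C^{|\alpha|}M_\alpha$ together with \eqref{algebra} gives $\alpha^{\alpha/2}M_\beta\le C_1^{|\alpha|}M_{\alpha+\beta}$, which is $(ii.2)$ (with $\kappa=\lambda$). Property $(ii.3)$ holds because $\omega_{\mathbf{M}}\sim\omega$ — a standard consequence of \eqref{Prop32Komatsu} and $\varphi^{**}_\omega=\varphi_\omega$, using $(\gamma)$ — and $(\alpha)$ is stable under $\sim$. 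Finally $(ii.4)$: constancy of $\mathcal{M}_\omega$ gives $\mathbf{W}^{(\kappa)}\approx\mathbf{M}$ for all $\kappa$, whence $\mathcal{S}_{\{\mathcal{M}_\omega\}}(\RR^d)=\mathcal{S}_{\{\mathbf{M}\}}(\RR^d)$ as topological vector spaces, while $\mathcal{S}_{\{\mathcal{M}_\omega\}}=\mathcal{S}_{\{\omega\}}$ by Proposition~\ref{Proposition61}; this is exactly the reasoning of $(V)$ in Section~\ref{oscillating}.

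For $(ii)\Rightarrow(i)$ I would use $(ii.4)$ with $d=1$. Proposition~\ref{Proposition61} turns it into $\mathcal{S}_{\{\mathcal{M}_\omega\}}(\RR)=\mathcal{S}_{\{\mathbf{M}\}}(\RR)$ as topological vector spaces, and I regard $\mathbf{M}$ as the constant matrix $\mathcal{N}:=\{\mathbf{N}^{(\kappa)}:=\mathbf{M}\}_{\kappa>0}$. The hypotheses of Theorem~\ref{Gelfand-inclusion2new} hold: both matrices are standard log-convex; $\mathcal{M}_\omega$ has \eqref{M2'R} by $(vi)$ of Lemma~\ref{Lemma61} and \eqref{12L2R} by the estimate above (using $(\beta)$); $\mathcal{N}$ has \eqref{M2'R} since $\mathbf{M}$ has \eqref{M2prime}, and \eqref{12L2R} by $(ii.2)$. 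Applying Theorem~\ref{Gelfand-inclusion2new} to both inclusions gives $\mathcal{M}_\omega\{\preceq\}\mathcal{N}$ and $\mathcal{N}\{\preceq\}\mathcal{M}_\omega$, i.e.\ $\mathbf{W}^{(\kappa)}\preceq\mathbf{M}$ for every $\kappa>0$ and $\mathbf{M}\preceq\mathbf{W}^{(\kappa_0)}$ for some $\kappa_0>0$. Taking $\kappa=2\kappa_0$ and using $\mathbf{W}^{(\kappa_0)}\le\mathbf{W}^{(2\kappa_0)}$ (by $(iii)$ of Lemma~\ref{Lemma61}) we obtain $\mathbf{W}^{(\kappa_0)}\approx\mathbf{W}^{(2\kappa_0)}$, which at the level of $\varphi^*_\omega$ reads $\varphi^*_\omega(2s)\le 2\varphi^*_\omega(s)+Cs$ and is therefore equivalent to \eqref{omega6} for $\omega$ by \cite[Lemma 5.9]{compositionpaper}.

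The Beurling case runs in parallel: one invokes the isomorphisms \eqref{Lambda2} and \eqref{sequencematrixbeurling} (whence the extra standing hypothesis $\omega(t)=o(t^2)$), uses \eqref{12L2B} in place of \eqref{12L2R} — here one needs $\mathbf{G}^{1/2}\vartriangleleft\mathbf{W}^{(\lambda)}$, which Lemma~\ref{Lemma63revisited} supplies precisely because $\omega(t)=o(t^2)$ — and replaces Theorem~\ref{Gelfand-inclusion2new} by Theorems~\ref{Gelfand-inclusionbeurling} and \ref{Gelfand-inclusionbeurlingmatrix}. The remaining assertions are then immediate: one may take $\mathbf{M}\equiv\mathbf{W}^{(\lambda)}$ for each $\lambda>0$ since all of them are mutually $\approx$; and since $\mathbf{M}$ satisfies \eqref{M1} and \eqref{M2}, hence \eqref{algebra} and, in constant-matrix form, \eqref{37LR}, \eqref{M2R} (resp.\ \eqref{37LB}, \eqref{M2B}), the spaces admit the description by separated growth at infinity as well, as recorded in Section~\ref{notation} (and via $(iv)$, $(vii)$ of Lemma~\ref{Lemma61} on the weight-function side). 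The step I expect to be the real obstacle is verifying these structural conditions — specifically that $\mathcal{M}_\omega$ and $\{\mathbf{M}\}$ fulfil \eqref{12L2R}, resp.\ \eqref{12L2B}: this is exactly where the assumption $(\beta)$ on $\omega$ (resp.\ $\omega(t)=o(t^2)$) is indispensable and must be routed through Lemma~\ref{Lemma63revisited}; the passage back from the class equality to \eqref{omega6} via the sandwich $\mathbf{W}^{(\kappa_0)}\approx\mathbf{W}^{(2\kappa_0)}$ is the other point requiring care.
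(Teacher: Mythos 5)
Your proposal is correct and follows essentially the same route as the paper: deduce constancy of $\mathcal{M}_\omega$ from \eqref{omega6} via \cite[Lemma 5.9]{compositionpaper}, take $\mathbf{M}\equiv\mathbf{W}^{(\lambda)}$ and verify the structural conditions, and for the converse apply Theorem~\ref{Gelfand-inclusion2new} to both inclusions to sandwich $\mathbf{M}$ between members of $\mathcal{M}_\omega$ and recover \eqref{omega6}. The only (harmless) deviations are cosmetic: you derive \eqref{M2} and \eqref{12L2R} by hand from Lemma~\ref{Lemma61}$(iv)$,$(vii)$ and Lemma~\ref{Lemma63revisited} where the paper cites \cite[Corollary 5.8]{compositionpaper} and \cite[Lemma 13, Proposition 3]{nuclearglobal2}, and you extract \eqref{omega6} from the single relation $\mathbf{W}^{(\kappa_0)}\approx\mathbf{W}^{(2\kappa_0)}$ rather than from full constancy of the matrix.
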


\begin{proof}
We will only treat the Roumieu case explicitly. The Beurling case follows analogously.

{\itshape The Roumieu case} $(i)\Rightarrow(ii)$: First, by \cite[Lemma 5.9 $(5.11)$]{compositionpaper} we get that $\mathcal{M}_{\omega}$ is constant, more precisely $\mathcal{M}_{\omega}\{\approx\}\mathbf{W}^{(\lambda)}$ for some/each $\lambda>0$. Thus, by definition of the spaces and Proposition \ref{Proposition61} we get as topological vector spaces for all $d\in\NN$
$$\mathcal{S}_{\{\omega\}}(\RR^d)=\mathcal{S}_{\{\mathcal{M}_{\omega}\}}(\RR^d)=\mathcal{S}_{\{\mathbf{W^{(\lambda)}}\}}(\RR^d),\qquad\forall\;\lambda>0.$$
Condition $\mathbf{W}^{(\lambda)}\in{\mathcal{LC}}$ is clear by definition. Moreover, \cite[Corollary 5.8 $(2)$]{compositionpaper} yields that some/each $\mathbf{W}^{(\lambda)}$
satisfies \eqref{M2}, hence \eqref{M2prime} as well. Also \eqref{12L2R} for some/each $\mathbf{W}^{(\lambda)}$ follows by \cite[Lemma 13 $(a)$]{nuclearglobal2} applied to $r=1/2$ which can be done by assumption $(\beta)$ on $\omega$ and by \cite[Proposition 3 $(a)\Rightarrow(b)$]{nuclearglobal2}.

Finally, that $\omega_{\mathbf{W}^{(\lambda)}}$ satisfies $(\alpha)$ for some/each $\lambda>0$ follows by the fact that $(\alpha)$ holds true for $\omega$ by assumption, by \cite[Lemma 5.7]{compositionpaper} and because this condition is clearly stable under equivalence of weight functions.

$(ii)\Rightarrow(i)$: First, we want to show that the matrix $\mathcal{M}_{\omega}$ is constant, i.e. $\mathbf{W}^{(\lambda)}\approx\mathbf{W}^{(\kappa)}$ for all $\lambda,\kappa>0$.

By Proposition \ref{Proposition61} and assumption $(ii.4)$ we get, as topological vector spaces,
$$\mathcal{S}_{\{\mathcal{M}_{\omega}\}}(\RR^d)=\mathcal{S}_{\{\omega\}}(\RR^d)=\mathcal{S}_{\{\mathbf{M}\}}(\RR^d).$$
Now Theorem \ref{Gelfand-inclusion2new} applied to the inclusion $\mathcal{S}_{\{\mathcal{M}_{\omega}\}}(\RR)\subseteq\mathcal{S}_{\{\mathbf{M}\}}(\RR)$ and to $\mathcal{M}\equiv\mathcal{M}_{\omega}$, $\mathcal{N}\equiv\{\mathbf{M}\}$, yields $\mathcal{M}_{\omega}\{\preceq\}\mathbf{M}$. By the converse inclusion $\mathcal{S}_{\{\mathbf{M}\}}(\RR)\subseteq\mathcal{S}_{\{\mathcal{M}_{\omega}\}}(\RR)$ and Theorem \ref{Gelfand-inclusion2new} applied to $\mathcal{M}\equiv\{\mathbf{M}\}$ and $\mathcal{N}\equiv\mathcal{M}_{\omega}$ we get $\mathbf{M}\{\preceq\}\mathcal{M}_{\omega}$ as well.

Recall that we can apply this characterizing result since $\omega$ is assumed to be a weight function and because of $(ii.1)$ and $(ii.2)$ for $\mathbf{M}$.

Summarizing, so far we have shown $\mathcal{M}_{\omega}\{\approx\}\mathbf{M}$ which clearly implies that $\mathcal{M}_{\omega}$ is constant. Then \cite[Lemma 5.9 $(5.11)$]{compositionpaper} yields \eqref{omega6} for $\omega$ and $\mathbf{M}\approx\mathbf{W}^{(\lambda)}$ for some/any $\lambda>0$ follows.

\end{proof}

Conversely, in the next result we start with a weight sequence, however the required arguments for the proof are the same as before.

\begin{theorem}\label{comparsion1}
Let $\mathbf{M}\in{\mathcal{LC}}$ be given and set $M_{\alpha}:=M_{|\alpha|}$ for any $\alpha\in\NN^d_0$. Assume that:
\begin{itemize}
\item[$(a)$] $\mathbf{M}$ satisfies \eqref{12L2R};

\item[$(b)$] $\mathbf{M}$ satisfies \eqref{M2prime};

\item[$(c)$] $\omega_{\mathbf{M}}$ satisfies $(\alpha)$.
\end{itemize}
Then the following are equivalent:
\begin{itemize}
\item[$(i)$] $\mathbf{M}$ satisfies \eqref{M2};

\item[$(ii)$] there exists a weight function $\omega$ satisfying \eqref{omega6} such that  for all $d\in\NN$
    \begin{equation}\label{comparisonequ1}
    \mathcal{S}_{\{\omega\}}(\RR^d)=\mathcal{S}_{\{\mathbf{M}\}}(\RR^d)
    \end{equation}
    as topological vector spaces.
\end{itemize}
The analogous result holds true for the Beurling case as well when $\mathbf{M}$ satisfies
  \eqref{12L2B} (instead of  \eqref{12L2R})  and (in addition) $\omega(t)=o(t^2)$ as $t\rightarrow\infty$.

In both cases we can take  the weight function $\omega\equiv\omega_{\mathbf{M}}$ in $(ii)$.
When the above equivalence holds true the space can be defined by separate or joint growth at infinity.
\end{theorem}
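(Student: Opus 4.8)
The plan is to follow the strategy of Theorem~\ref{comparsion} with the roles of the weight function and the weight sequence interchanged, always taking $\omega\equiv\omega_{\mathbf{M}}$ and exploiting that \eqref{omega6} forces the associated matrix to be constant. I treat mainly the Roumieu case, the Beurling one being parallel.

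For the implication $(i)\Rightarrow(ii)$, first I would check that $\omega_{\mathbf{M}}$ is a genuine weight function in the sense of Definition~\ref{defomega}: properties $(\gamma)$ and $(\delta)$ hold automatically for every $\mathbf{M}\in\mathcal{LC}$ (see the references recalled after $(i)$ in \S\ref{construction}), $(\alpha)$ is precisely hypothesis $(c)$, and $(\beta)$ is not an extra assumption — specializing \eqref{12L2R} to $\beta=0$, $d=1$ gives $p^{p/2}\le B(CH)^pM_p$, whence $\mathbf{G}^{1/2}\preceq\mathbf{M}$ (via Stirling), and therefore $\omega_{\mathbf{M}}(t)\le\omega_{\mathbf{G}^{1/2}}(Ct)=O(t^2)$ since $\omega_{\mathbf{G}^{1/2}}\sim t\mapsto t^2$ (see the facts collected in \S\ref{oscillatingpreliminaries}). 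Next, \eqref{M2} for $\mathbf{M}$ gives \eqref{omega6} for $\omega_{\mathbf{M}}$ by \cite[Prop.~3.6]{Komatsu73}. Finally, exactly as in point $(V)$ of \S\ref{construction}: by \cite[Lemma~5.9]{compositionpaper}, \eqref{omega6} forces $\mathcal{M}_{\omega_{\mathbf{M}}}$ to be constant, while \eqref{Prop32Komatsu} together with the normalization of $\mathbf{M}$ identifies $\mathbf{M}=\mathbf{W}^{(1)}$; hence $\mathbf{M}\approx\mathbf{W}^{(\lambda)}$ for every $\lambda>0$, so Proposition~\ref{Proposition61} yields $\mathcal{S}_{\{\mathbf{M}\}}(\RR^d)=\mathcal{S}_{\{\mathcal{M}_{\omega_{\mathbf{M}}}\}}(\RR^d)=\mathcal{S}_{\{\omega_{\mathbf{M}}\}}(\RR^d)$ as topological vector spaces for all $d\in\NN$; this also proves that one may take $\omega\equiv\omega_{\mathbf{M}}$ in $(ii)$. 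The description by separated growth at infinity follows from \eqref{algebra} (implied by \eqref{M1}) and \eqref{M2}, i.e. from \eqref{37LR} and \eqref{M2R} holding for the constant matrix $\mathcal{M}_{\omega_{\mathbf{M}}}$.

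For the converse $(ii)\Rightarrow(i)$, let $\omega$ be a weight function with \eqref{omega6} and $\mathcal{S}_{\{\omega\}}(\RR)=\mathcal{S}_{\{\mathbf{M}\}}(\RR)$ as topological vector spaces. Again \eqref{omega6} makes $\mathcal{M}_{\omega}$ constant, so $\mathcal{S}_{\{\omega\}}(\RR)=\mathcal{S}_{\{\mathbf{W}^{(\lambda)}\}}(\RR)$ for a fixed $\lambda>0$; here $\mathbf{W}^{(\lambda)}\in\mathcal{LC}$, it satisfies \eqref{M2} (hence \eqref{M2prime}) by \cite[Corollary~5.8]{compositionpaper}, and it satisfies \eqref{12L2R} by \cite[Lemma~13(a)]{nuclearglobal2} applied with $r=1/2$ (licensed by $(\beta)$ for $\omega$) together with \cite[Proposition~3]{nuclearglobal2}. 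Thus both $\mathbf{M}$ (by the standing hypotheses $(a)$, $(b)$ and $\mathbf{M}\in\mathcal{LC}$) and $\mathbf{W}^{(\lambda)}$ satisfy the assumptions of Theorem~\ref{Gelfandweightsequence}$(I)$, which, applied to the two continuous inclusions making up the equality $\mathcal{S}_{\{\mathbf{M}\}}(\RR)=\mathcal{S}_{\{\mathbf{W}^{(\lambda)}\}}(\RR)$, gives $\mathbf{M}\preceq\mathbf{W}^{(\lambda)}$ and $\mathbf{W}^{(\lambda)}\preceq\mathbf{M}$, i.e. $\mathbf{M}\approx\mathbf{W}^{(\lambda)}$. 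Since \eqref{M2} is stable under $\approx$ and $\mathbf{W}^{(\lambda)}$ enjoys it, so does $\mathbf{M}$. The Beurling version is handled verbatim with \eqref{12L2B}, the isomorphism \eqref{sequencematrixbeurling} and Theorem~\ref{Gelfandweightsequence}$(III)$; the extra requirement $\omega(t)=o(t^2)$ in $(ii)$ is automatic for $\omega=\omega_{\mathbf{M}}$, because \eqref{12L2B} with $\beta=0$ and $C\downarrow0$ forces $\mathbf{G}^{1/2}\vartriangleleft\mathbf{M}$, i.e. $\omega_{\mathbf{M}}(t)=o(t^2)$.

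I expect the main difficulty to be not any single estimate but the bookkeeping: on the one hand verifying that $\omega_{\mathbf{M}}$ is a bona fide weight function — in particular recognizing that $(\beta)$, respectively $\omega(t)=o(t^2)$, is already encoded in \eqref{12L2R}, respectively \eqref{12L2B} — and on the other hand checking that both $\mathbf{M}$ and $\mathbf{W}^{(\lambda)}$ simultaneously meet the standing hypotheses of Theorem~\ref{Gelfandweightsequence}, so that the single-sequence characterization may legitimately be invoked in both directions.
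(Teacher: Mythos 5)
Your proposal is correct and follows essentially the same route as the paper: for $(i)\Rightarrow(ii)$ you verify that $\omega_{\mathbf{M}}$ is a weight function (with $(\beta)$, resp. $o(t^2)$, extracted from \eqref{12L2R}, resp. \eqref{12L2B}, at $\beta=0$), pass to \eqref{omega6} via \cite[Prop.~3.6]{Komatsu73}, and use the constancy of $\mathcal{M}_{\omega_{\mathbf{M}}}$ together with $\mathbf{M}=\mathbf{W}^{(1)}$ exactly as in point $(V)$ of Section~\ref{construction}; for $(ii)\Rightarrow(i)$ you apply the inclusion characterization in both directions to obtain $\mathbf{M}\approx\mathbf{W}^{(\lambda)}$ and conclude by stability of \eqref{M2} under $\approx$. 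The only cosmetic difference is that you invoke the single-sequence version (Theorem~\ref{Gelfandweightsequence}) where the paper cites the matrix version (Theorem~\ref{Gelfand-inclusion2new}); these are interchangeable here.
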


\begin{proof}
Again we only treat the Roumieu case.

$(i)\Rightarrow(ii)$ We consider the weight function $\omega_{\mathbf{M}}$. As mentioned in Section \ref{construction}, the basic assumptions to be a weight function, $(\gamma)$ and $(\delta)$ hold true automatically by definition and $(\alpha)$ follows by assumption $(c)$; \eqref{M2} implies \eqref{omega6}, see \cite[Proposition 3.6]{Komatsu73}. The choice $\beta=0$ in \eqref{12L2R} for $\mathbf{M}$ and the proof of $(6.6)$ in \cite[Lemma 13 $(a)$]{nuclearglobal2} applied to $r=1/2$ and to $\mathbf{W}^{(\lambda)}\equiv\mathbf{M}$  imply $(\beta)$ for $\omega_{\mathbf{M}}$.

Thus $\omega_{\mathbf{M}}$ is a weight function as required for $(ii)$. Let $\mathcal{M}_{\omega_{\mathbf{M}}}:=\{\mathbf{M}^{(\lambda)}: \lambda>0\}$ be the matrix associated to
$\omega_{\mathbf{M}}$. By \cite[Lemma 5.9 $(5.11)$]{compositionpaper} we get that this matrix is constant and since $\mathbf{M}\equiv\mathbf{M}^{(1)}$ (see $(V)$ in Section \ref{construction}) we have $\mathbf{M}^{(\lambda)}\approx\mathbf{M}$ for all $\lambda>0$. This shows \eqref{comparisonequ1} for $\omega\equiv\omega_{\mathbf{M}}$ by taking into account \cite[Proposition 5]{nuclearglobal2}.

$(ii)\Rightarrow(i)$ We follow the proof of $(ii)\Rightarrow(i)$ in Theorem \ref{comparsion} by applying Theorem \ref{Gelfand-inclusion2new} twice which can be done by the assumptions on $\omega$ and $\mathbf{M}$. By \eqref{omega6} the associated matrix $\mathcal{M}_{\omega}$ is constant (see \cite[Lemma 5.9 $(5.11)$]{compositionpaper}), some/each
$\mathbf{W}^{(\lambda)}$ satisfies \eqref{M2} (see \cite[Proposition 3.6]{Komatsu73}) and finally $\mathbf{W}^{(\lambda)}\approx\mathbf{M}$ for some/each $\lambda>0$ holds true. Since \eqref{M2} is obviously stable under the equivalence relation $\approx$, the proof is complete.
\end{proof}

\begin{remark}
Recently, in \cite[Thm. 3.1]{subaddlike} the requirement that $\omega_{\mathbf{M}}$ has $(\alpha)$, arising in $(ii.3)$ in Theorem \ref{comparsion} and in assumption $(c)$ in Theorem \ref{comparsion1}, has been characterized in terms of $\mathbf{M}\in{\mathcal{LC}}$ by the following condition:
$$\exists\;L\in\NN:\;\;\;\liminf_{p\rightarrow\infty}\frac{(M_{Lp})^{1/(Lp)}}{(M_p)^{1/p}}>1.$$
\end{remark}


\section{Alternative assumptions for the characterization of the inclusion relations}
\label{alternativesection}

The aim of this section is to present alternative proofs for the characterizations of the inclusion relations for Gelfand-Shilov classes. More precisely, we are not using results from \cite{nuclearglobal2} in the classes $\mathcal{S}$ (with matrix/function weights),  but following ideas generally used in the ultradifferentiable setting $\mathcal{E}$ (with matrix/function weights).
 In this case our assumptions are slightly different and we have to involve the property of {\itshape non-quasianalyticity}, which was  not required in section~\ref{characterizationsection}.
 We refer to Remark~\ref{nonquasiremark} to compare the distinct assumptions of sections
 \ref{characterizationsection} and \ref{alternativesection}: stronger conditions on the first weight, but
 weaker conditions on the second weight.
 Another difference with respect to
 Section~\ref{characterizationsection} is that here the Roumieu and the Beurling cases require different techniques.
 So let us start by the Roumieu case.

\subsection{The Roumieu case}
Let $\mathbf{M}\in{\mathcal{LC}}$ be given. We recall (e.g. see \cite[Lemma 2.9]{compositionpaper}): There exists a function $\theta_{\mathbf{M}}$ belonging to the ultradifferentiable class $\mathcal{E}_{\{\mathbf{M}\}}(\RR)$ and such that $|\theta_{\mathbf{M}}^{(j)}(0)|\ge M_j$ for all $j\in\NN$. In fact $\theta_{\mathbf{M}}$ belongs to the global ultradifferentiable class since the estimates are valid on whole $\RR$, more precisely:
$$\exists\;C,h>0\;\forall\;j\in\NN_0\;\forall\;x\in\RR:\;\;\;|\theta_{\mathbf{M}}^{(j)}(x)|\le Ch^jM_j.$$

In \cite{compositionpaper} such a function has been called a {\itshape characteristic function}. We can assume $\theta_{\mathbf{M}}$ to be real- or complex-valued (see the proof of \cite[Lemma 2.9]{compositionpaper}). Those functions are in some sense optimal in the ultradifferentiable classes of Roumieu-type.

Please note, that $\theta_{\mathbf{M}}$ cannot belong to the Beurling-type class $\mathcal{E}_{(\mathbf{M})}(\RR)$. Such functions are useful to characterize the inclusion relations of (global/local) ultradifferentiable function classes in terms of growth relations of weight sequences/functions or even matrices, see \cite[Prop. 2.12, Prop. 4.6, Cor. 5.17]{compositionpaper}.\vspace{6pt}

However, for our purposes we need that $\theta_{\mathbf{M}}\in\mathcal{S}_{\{\mathbf{M}\}}(\RR)$, $\mathbf{M}\in{\mathcal{LC}}$. To this aim we assume that $\mathbf{M}$ is {\itshape non-quasianalytic}, i.e.
\begin{equation}\label{nonquasianalytic}
\sum_{k\ge 1}\frac{1}{\mu_k}=\sum_{k\ge 1}\frac{M_{k-1}}{M_k}<\infty\Longleftrightarrow\sum_{k\ge 1}\frac{1}{(M_k)^{1/k}}<\infty.
\end{equation}
For the last equivalence we refer to \cite[Lemma~4.1]{Komatsu73}.

Then we can proceed as follows.
First, by the well-known Denjoy-Carleman-theorem we obtain that both the classes $\mathcal{D}_{\{\mathbf{M}\}}$ and $\mathcal{D}_{(\mathbf{M})}$ are non-trivial (e.g. see \cite[Theorem 4.2]{Komatsu73}).

Hence, in this situation we can define
$$\psi_{\mathbf{M}}:=\theta_{\mathbf{M}}\cdot\phi,$$
with $\phi\in\mathcal{D}_{\{\mathbf{M}\}}$ having $\phi^{(j)}(0)=\delta_{j,0}$ (Kronecker delta). For the existence of such a test function we refer to the proof of \cite[Thm. 2.2]{petzsche}. Concerning the support of $\phi$ we do not make any restriction.\vspace{6pt}

First, $\psi_{\mathbf{M}}\in\mathcal{D}_{\{\mathbf{M}\}}\subseteq\mathcal{S}_{\{\mathbf{M}\}}$: $\psi_{\mathbf{M}}$ obviously has compact support $K\ni 0$ with $\supp(\psi_{\mathbf{M}})\subseteq\supp(\phi)=K$ and moreover both $\theta_{\mathbf{M}}$ and $\phi$ admit growth control expressed in terms of the weight sequence $\mathbf{M}$. Hence for all $j\in\NN_0$ and $x\in K$:
\begin{align*}
|\psi^{(j)}_{\mathbf{M}}(x)|&=\left|\sum_{k=0}^{j}\binom{j}{k}\theta_{\mathbf{M}}^{(j-k)}(x)\cdot\phi^{(k)}(x)\right|\le\sum_{k=0}^{j}\binom{j}{k}|\theta_{\mathbf{M}}^{(j-k)}(x)||\phi^{(k)}(x)|
\\&
\le \sum_{k=0}^{j}\binom{j}{k}C_1h_1^{j-k}M_{j-k}C_2h_2^kM_k\le CM_j\sum_{k=0}^{j}\binom{j}{k}h_1^kh_2^{j-k}=Ch^jM_j,
\end{align*}
with $C:=C_1C_2$ and $h:=h_1+h_2$. Here we have used that $M_kM_{j-k}\le M_j$ for all $j,k\in\NN_0$ with $0\le k\le j$ which follows by log-convexity and normalization (e.g. see \cite[Lemma 2.0.6]{diploma}). Hence $\psi_{\mathbf{M}}\in\mathcal{E}_{\{\mathbf{M}\}}$ is shown, more precisely since $\supp(\psi_{\mathbf{M}})\subseteq K$ we have $\psi_{\mathbf{M}}\in\mathcal{D}_{\{\mathbf{M}\}}\subseteq\mathcal{S}_{\{\mathbf{M}\}}$.\vspace{6pt}

Second, by the product rule we have $\psi^{(j)}_{\mathbf{M}}(0)=\sum_{0\le k\le j}\binom{j}{k}\theta_{\mathbf{M}}^{(k)}(0)\cdot\phi^{(j-k)}(0)=\theta^{(j)}_{\mathbf{M}}(0)$ since only the summand with $k=j$ is ``surviving''. Thus $|\psi^{(j)}_{\mathbf{M}}(0)|\ge M_j$ for all $j\in\NN$ as well.

With this preparation we are able to prove the first main statement.

\begin{theorem}\label{Gelfand-inclusion1}
Let $\mathbf{M},\mathbf{N}\in\RR_{>0}^{\NN}$ be given and consider the following assertions:
\begin{itemize}
	\item[$(i)$] $\mathbf{M}\preceq\mathbf{N}$,
	
	\item[$(ii$)] $\mathcal{S}_{\{\mathbf{M}\}}(\RR^d)\subseteq\mathcal{S}_{\{\mathbf{N}\}}(\RR^d)$ is valid with continuous inclusion.
\end{itemize}

Then $(i)\Rightarrow(ii)$ is valid for all dimensions $d\in\NN$ and the classes in $(ii)$ can be defined by joint or separated growth at infinity. If $\mathbf{M}\in{\mathcal{LC}}$ such that \eqref{nonquasianalytic} is valid, if $(ii)$ holds for the case $d=1$, then $(ii)\Rightarrow(i)$ is valid, too.
\end{theorem}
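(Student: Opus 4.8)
\textit{The implication $(i)\Rightarrow(ii)$.} This is immediate from the definitions and uses nothing beyond $\mathbf{M},\mathbf{N}\in\RR_{>0}^{\NN}$. If $M_p\le C^pN_p$ for all $p$ and some $C\ge 1$, then $M_{\alpha+\beta}\le C^{|\alpha+\beta|}N_{\alpha+\beta}$, so for every $f\in C^\infty(\RR^d)$ and every $h>0$ one has $\|f\|_{\infty,\mathbf{N},Ch}\le\|f\|_{\infty,\mathbf{M},h}$; hence $\mathcal{S}_{\{\mathbf{M}\}}(\RR^d)\subseteq\mathcal{S}_{\{\mathbf{N}\}}(\RR^d)$, and the inclusion is continuous since it carries the Banach space defined by $\|\cdot\|_{\infty,\mathbf{M},h}$ into the one defined by $\|\cdot\|_{\infty,\mathbf{N},Ch}$, and thus is continuous for the inductive limit topologies. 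The same estimate applied to $M_\alpha M_\beta\le C^{|\alpha+\beta|}N_\alpha N_\beta$ shows that the inclusion holds equally for the separated-growth seminorms, so it is irrelevant which of the two descriptions one uses; the argument is dimension-independent.

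\textit{The implication $(ii)\Rightarrow(i)$.} Here I would invoke the test function $\psi_{\mathbf{M}}$ constructed just before the statement. Since $\mathbf{M}\in{\mathcal{LC}}$ is non-quasianalytic, \eqref{nonquasianalytic} together with the Denjoy--Carleman theorem produces the cut-off $\phi\in\mathcal{D}_{\{\mathbf{M}\}}$ with $\phi^{(j)}(0)=\delta_{j,0}$, and multiplying the characteristic function $\theta_{\mathbf{M}}$ by $\phi$ gives $\psi_{\mathbf{M}}=\theta_{\mathbf{M}}\cdot\phi\in\mathcal{D}_{\{\mathbf{M}\}}(\RR)\subseteq\mathcal{S}_{\{\mathbf{M}\}}(\RR)$ together with the lower bound $|\psi_{\mathbf{M}}^{(j)}(0)|\ge M_j$ for all $j\in\NN$. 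Applying $(ii)$ for $d=1$ gives $\psi_{\mathbf{M}}\in\mathcal{S}_{\{\mathbf{N}\}}(\RR)$, so there are $C,h>0$ with $\|x^\alpha\partial^\beta\psi_{\mathbf{M}}\|_\infty\le Ch^{|\alpha+\beta|}N_{\alpha+\beta}$ for all $\alpha,\beta\in\NN_0$. Choosing $\alpha=0$ and evaluating at the single point $x=0$ yields, for every $j\in\NN$,
\[
M_j\le|\psi_{\mathbf{M}}^{(j)}(0)|\le\|\partial^j\psi_{\mathbf{M}}\|_\infty\le Ch^jN_j,
\]
which is exactly $\mathbf{M}\preceq\mathbf{N}$ (the case $j=0$ being trivial since $M_0=1$). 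If one starts instead from the separated-growth inclusion, note that $\psi_{\mathbf{M}}$ still lies in the separated-growth version of $\mathcal{S}_{\{\mathbf{M}\}}(\RR)$ because $M_\alpha\ge 1$ for $\mathbf{M}\in{\mathcal{LC}}$, and the $\alpha=0$ slice of that seminorm gives $M_j\le Ch^jN_0N_j$ with $N_0$ a harmless constant. Only the set-theoretic inclusion, not its continuity, is used.

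\textit{The main obstacle.} The proof is short precisely because the substance has been placed in the preliminary construction: the delicate point is exhibiting a function that is globally smooth and rapidly decreasing with $\mathbf{M}$-type bounds (so that it belongs to $\mathcal{S}_{\{\mathbf{M}\}}$) and yet whose derivatives at a point saturate the sequence $\mathbf{M}$. This is where non-quasianalyticity enters in an essential way — it is what allows the (non-compactly supported) characteristic function $\theta_{\mathbf{M}}$ to be localized by a $\mathcal{D}_{\{\mathbf{M}\}}$-cut-off without destroying the lower bound at the origin. Within the theorem itself the only things to verify are that the global $\sup$-bounds defining $\mathcal{S}_{\{\mathbf{N}\}}$ may legitimately be evaluated at a single point, and that the $\alpha=0$ slice of the Gelfand--Shilov seminorm is nothing but a Roumieu-type derivative estimate; both are immediate. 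I would also remark that the same reasoning, with the roles of $\mathbf{M}$ and $\mathbf{N}$ interchanged, is what makes $\mathbf{M}\preceq\mathbf{N}$ the sharp condition here, in agreement with Theorem~\ref{Gelfandweightsequence}.
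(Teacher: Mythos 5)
Your proposal is correct and follows essentially the same route as the paper: the forward implication is read off from the seminorm estimates, and the converse applies the inclusion for $d=1$ to the test function $\psi_{\mathbf{M}}=\theta_{\mathbf{M}}\cdot\phi$ built in the preceding preparation, then evaluates the $\alpha=0$ slice at $x=0$ to get $M_j\le|\psi_{\mathbf{M}}^{(j)}(0)|\le Ch^jN_j$. Your added remarks on the separated-growth seminorms and on where non-quasianalyticity enters are consistent with the paper's setup and do not change the argument.
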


\begin{proof}
$(i)\Rightarrow(ii)$ follows immediately by the definition of the spaces.\vspace{6pt}

$(ii)\Rightarrow(i)$ By the assumptions on the weights and the case $d=1$ we have $\psi_{\mathbf{M}}\in\mathcal{S}_{\{\mathbf{M}\}}(\RR)\subseteq\mathcal{S}_{\{\mathbf{N}\}}(\RR)$. So we can find $C,h>0$ such that $|\psi_{\mathbf{M}}^{(j)}(x)|\le Ch^jN_j$ for all $x\in\RR$ and $j\in\NN_0$. Consequently, for $x=0$, we get $M_j\le|\psi_{\mathbf{M}}^{(j)}(0)|\le Ch^jN_j$ for all $j\in\NN_0$, which implies $\mathbf{M}\preceq\mathbf{N}$.
\end{proof}




This result can be immediately used to get the same for the general weight matrix case. A standard log-convex matrix $\mathcal{M}$ is called {\itshape Roumieu non-quasianalytic,} if there exists some $\lambda_0>0$ such that $\mathbf{M}^{(\lambda_0)}$ is non-quasianalytic.

\begin{theorem}\label{Gelfand-inclusion2}
	Let $\mathcal{M}:=\{\mathbf{M}^{(\lambda)}: \lambda>0\}$, $\mathcal{N}:=\{\mathbf{N}^{(\lambda)}: \lambda>0\}$ be arbitrary and consider the following assertions:
	\begin{itemize}
		\item[$(i)$] $\mathcal{M}\{\preceq\}\mathcal{N}$,
		
		\item[$(ii$)] $\mathcal{S}_{\{\mathcal{M}\}}(\RR^d)\subseteq\mathcal{S}_{\{\mathcal{N}\}}(\RR^d)$ is valid with continuous inclusion.
	\end{itemize}
Then $(i)\Rightarrow(ii)$ is valid for all dimensions $d\in\NN$ and the classes in $(ii)$ can be defined by joint or separated growth at infinity. If $\mathcal{M}$ is standard log-convex and Roumieu non-quasianalytic and if $(ii)$ holds for the case $d=1$, then $(ii)\Rightarrow(i)$ is valid, too.
\end{theorem}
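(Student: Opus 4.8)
The plan is to reduce the statement to the single-sequence result Theorem~\ref{Gelfand-inclusion1}, mirroring the passage from weight sequences to weight matrices used in the local theory. For $(i)\Rightarrow(ii)$ I would simply unravel the definitions, in any dimension $d$: if $f\in\mathcal{S}_{\{\mathcal{M}\}}(\RR^d)$, fix $\lambda,h,C>0$ with $\|f\|_{\infty,\mathbf{M}^{(\lambda)},h}\le C$; by $\mathcal{M}\{\preceq\}\mathcal{N}$ choose $\kappa>0$ and $A\ge 1$ with $M^{(\lambda)}_\alpha\le A^{|\alpha|}N^{(\kappa)}_\alpha$ for all $\alpha$, so that $\|f\|_{\infty,\mathbf{N}^{(\kappa)},Ah}\le C$ and $f\in\mathcal{S}_{\{\mathcal{N}\}}(\RR^d)$, the same estimate giving continuity of the inclusion. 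Since also $M^{(\lambda)}_\alpha M^{(\lambda)}_\beta\le A^{|\alpha+\beta|}N^{(\kappa)}_\alpha N^{(\kappa)}_\beta$, the identical argument works for the separated-growth seminorms, so both descriptions are covered.

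For $(ii)\Rightarrow(i)$ I would fix $\lambda>0$ and look for $\kappa>0$ with $\mathbf{M}^{(\lambda)}\preceq\mathbf{N}^{(\kappa)}$. Let $\lambda_0$ be a level at which $\mathbf{M}^{(\lambda_0)}$ is non-quasianalytic and put $\mu:=\max\{\lambda,\lambda_0\}$. Then $\mathbf{M}^{(\mu)}\in{\mathcal{LC}}$ by standard log-convexity, and from $\mathbf{M}^{(\lambda_0)}\le\mathbf{M}^{(\mu)}$ together with the equivalence in \eqref{nonquasianalytic} one gets $\sum_k(M^{(\mu)}_k)^{-1/k}\le\sum_k(M^{(\lambda_0)}_k)^{-1/k}<\infty$, so $\mathbf{M}^{(\mu)}$ is non-quasianalytic as well. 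Hence the function $\psi_{\mathbf{M}^{(\mu)}}$ constructed before Theorem~\ref{Gelfand-inclusion1} is available: it belongs to $\mathcal{D}_{\{\mathbf{M}^{(\mu)}\}}(\RR)\subseteq\mathcal{S}_{\{\mathbf{M}^{(\mu)}\}}(\RR)\subseteq\mathcal{S}_{\{\mathcal{M}\}}(\RR)$ and satisfies $|\psi_{\mathbf{M}^{(\mu)}}^{(j)}(0)|\ge M^{(\mu)}_j$ for all $j\in\NN$.

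Applying $(ii)$ in dimension $d=1$ then gives $\psi_{\mathbf{M}^{(\mu)}}\in\mathcal{S}_{\{\mathcal{N}\}}(\RR)$, so there exist $\kappa,h,C>0$ with $|\psi_{\mathbf{M}^{(\mu)}}^{(j)}(x)|\le Ch^jN^{(\kappa)}_j$ for all $x\in\RR$ and $j\in\NN_0$; setting $x=0$ yields $M^{(\mu)}_j\le Ch^jN^{(\kappa)}_j$, that is $\mathbf{M}^{(\lambda)}\le\mathbf{M}^{(\mu)}\preceq\mathbf{N}^{(\kappa)}$. As $\lambda>0$ was arbitrary this is precisely $\mathcal{M}\{\preceq\}\mathcal{N}$. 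Note that evaluating at $x=0$ only involves the derivative part of the seminorm together with $N^{(\kappa)}_0=1$, so this step does not depend on whether joint or separated growth is used.

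I do not expect a genuine obstacle here, since the hard analytic content is already packaged in the characteristic function $\psi_{\mathbf{M}}$ and in Theorem~\ref{Gelfand-inclusion1}. The one point needing care is the treatment of the quantifier ``$\forall\lambda>0$'' in $\mathcal{M}\{\preceq\}\mathcal{N}$: one must verify that the single non-quasianalytic index $\lambda_0$ can be raised to some $\mu\ge\lambda$ while preserving both log-convexity and non-quasianalyticity, which is what makes the single-sequence argument applicable at every prescribed $\lambda$; the monotonicity of the matrix together with the $(M_k)^{1/k}$-reformulation of \eqref{nonquasianalytic} takes care of this.
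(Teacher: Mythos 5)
Your proof is correct and follows essentially the same route as the paper: reduce to $d=1$, use the characteristic function $\psi_{\mathbf{M}^{(\lambda)}}$ built from non-quasianalyticity, and evaluate the derivatives at $0$ to extract $\mathbf{M}^{(\lambda)}\preceq\mathbf{N}^{(\kappa)}$. The only difference is cosmetic: where the paper says one may "not consider" the indices $\lambda<\lambda_0$, you make the same point explicit by passing to $\mu=\max\{\lambda,\lambda_0\}$ and checking via the $(M_k)^{1/k}$-form of \eqref{nonquasianalytic} that non-quasianalyticity propagates upward in the matrix.
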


\begin{proof}
$(i)\Rightarrow(ii)$ follows again by the definition of the spaces.\vspace{6pt}

For $(ii)\Rightarrow(i)$ we use this inclusion for $d=1$. Since $\mathcal{M}$ is standard log-convex, for each given index $\lambda>0$ we can find a characteristic function $\theta_{\mathbf{M}^{(\lambda)}}$. Since $\mathcal{M}$ is Roumieu non-quasianalytic, we can assume that each $\mathbf{M}^{(\lambda)}$ is non-quasianalytic as well and so $\mathcal{D}_{\{\mathbf{M}^{(\lambda)}\}}$ is non-trivial. This can be achieved by ``not considering'' all possible quasianalytic sequences $M^{(\lambda)}$, $\lambda<\lambda_0$, which by definition does not change the according function classes. Treating the function $\psi_{\mathbf{M}^{(\lambda)}}$ we see that
$$\psi_{\mathbf{M}^{(\lambda)}}\in\mathcal{S}_{\{\mathbf{M}^{(\lambda)}\}}(\RR)\subseteq\mathcal{S}_{\{\mathcal{M}\}}(\RR)\subseteq\mathcal{S}_{\{\mathcal{N}\}}(\RR),$$
hence there exists some $\kappa>0$ and $C,h>0$ such that $|\psi_{\mathbf{M}^{(\lambda)}}^{(j)}(x)|\le Ch^jN^{(\kappa)}_j$ for all $x\in\RR$ and $j\in\NN_0$. Consequently, for $x=0$, we get $M^{(\lambda)}_j\le|\psi_{\mathbf{M}^{(\lambda)}}^{(j)}(0)|\le Ch^jN^{(\kappa)}_j$ for all $j\in\NN_0$, which immediately implies $\mathbf{M}^{(\lambda)}\preceq\mathbf{N}^{(\kappa)}$.
\end{proof}

The weight function case is reduced to Theorem \ref{Gelfand-inclusion2} by using the associated weight matrix $\mathcal{M}_{\omega}$ and for this we recall:

Given any (general) weight function $\omega$ we get that $\mathcal{M}_{\omega}$ is always standard log-convex (see Lemma \ref{Lemma61}). We call $\omega$ {\em non-quasianalytic}, if
\begin{equation}\label{omeganonquasi}
\int_{1}^{\infty}\frac{\omega(t)}{t^2}dt<+\infty.
\end{equation}
It is known, see \cite[Cor. 4.8]{testfunctioncharacterization}, that $\omega$ is non-quasianalytic if and only if some/each $\mathbf{W}^{(\lambda)}$ is non-quasianalytic, i.e. $\mathcal{M}_{\omega}$ is Roumieu non-quasianalytic.\vspace{6pt}

By applying Theorem \ref{Gelfand-inclusion2} to the associated weight matrices we obtain:

\begin{theorem}\label{Gelfand-inclusion3}
	Let $\omega$ be a non-quasianalytic weight function and $\sigma$ be a (general) weight function. Then the following are equivalent:
	\begin{itemize}
		\item[$(i)$] $\omega\preceq\sigma$,

		\item[$(ii$)] $\mathcal{S}_{\{\omega\}}(\RR^d)\subseteq\mathcal{S}_{\{\sigma\}}(\RR^d)$ is valid for all $d\in\NN$ with continuous inclusion.
	\end{itemize}
The spaces in $(ii)$ can be defined by joint or separated growth at infinity and $(i)\Rightarrow(ii)$ is valid for general weight functions $\omega$ and $\sigma$. For $(ii)\Rightarrow(i)$ only the inclusion for $d=1$ is required.
\end{theorem}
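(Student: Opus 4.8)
The plan is to deduce this from the general weight matrix statement, Theorem~\ref{Gelfand-inclusion2}, applied to the associated weight matrices $\mathcal{M}_{\omega}=\{\mathbf{W}^{(\lambda)}:\lambda>0\}$ and $\mathcal{M}_{\sigma}$, passing freely between $\mathcal{S}_{\{\omega\}}(\RR^d)$ and $\mathcal{S}_{\{\mathcal{M}_{\omega}\}}(\RR^d)$ (and likewise for $\sigma$) by Proposition~\ref{Proposition61}, which is a topological identification, so that the continuity of the inclusion and the equivalent descriptions by joint or separated growth at infinity carry over from Theorem~\ref{Gelfand-inclusion2}. The one extra ingredient I would isolate first is the dictionary
$$\omega\preceq\sigma\;\Longleftrightarrow\;\mathcal{M}_{\omega}\{\preceq\}\mathcal{M}_{\sigma},$$
valid for arbitrary (general) weight functions: it follows from the known comparison $\omega_{\mathbf{W}^{(\lambda)}}\le\tfrac1\lambda\omega$ (with the reverse inequality up to a dilation and an additive constant; this rests on \eqref{Prop32Komatsu} and $\varphi_{\omega}^{**}=\varphi_{\omega}$, as in item~$(V)$ of Section~\ref{construction}), from the translation of $\preceq$ for sequences into an inequality between associated functions (\cite[Lemmas~3.8, 3.10]{Komatsu73}), and from finitely many iterations of property~$(\alpha)$ for $\omega$ to absorb the dilation constants; see also \cite{compositionpaper}. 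Non-quasianalyticity plays no role here.

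For $(i)\Rightarrow(ii)$ I would argue: $\omega\preceq\sigma$ yields $\mathcal{M}_{\omega}\{\preceq\}\mathcal{M}_{\sigma}$ by the dictionary, then the implication $(i)\Rightarrow(ii)$ of Theorem~\ref{Gelfand-inclusion2} gives $\mathcal{S}_{\{\mathcal{M}_{\omega}\}}(\RR^d)\subseteq\mathcal{S}_{\{\mathcal{M}_{\sigma}\}}(\RR^d)$ with continuous inclusion for every $d\in\NN$, and Proposition~\ref{Proposition61} turns this into $\mathcal{S}_{\{\omega\}}(\RR^d)\subseteq\mathcal{S}_{\{\sigma\}}(\RR^d)$. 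Since only the ``easy'' direction of Theorem~\ref{Gelfand-inclusion2} is used, no non-quasianalyticity enters and the argument works for general weight functions $\omega$ and $\sigma$ (alternatively one can read it off directly from the definitions, using $\varphi_{\sigma}^{*}(s)\ge A\varphi_{\omega}^{*}(s/A)-A$ when $\sigma\le A\omega+A$).

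For $(ii)\Rightarrow(i)$ I would use $(ii)$ only for $d=1$ and check the hypotheses of the converse implication of Theorem~\ref{Gelfand-inclusion2} for $\mathcal{M}\equiv\mathcal{M}_{\omega}$, $\mathcal{N}\equiv\mathcal{M}_{\sigma}$: by Lemma~\ref{Lemma61} each $\mathbf{W}^{(\lambda)}$ is normalized and log-convex, and $(\gamma)$ forces $\varphi^{*}_{\omega}(s)/s\to\infty$, hence $(W^{(\lambda)}_p)^{1/p}\to\infty$, so $\mathcal{M}_{\omega}$ is standard log-convex; and $\mathcal{M}_{\omega}$ is Roumieu non-quasianalytic because $\omega$ is non-quasianalytic, so some/each $\mathbf{W}^{(\lambda)}$ is non-quasianalytic by \cite[Cor.~4.8]{testfunctioncharacterization}. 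Then Proposition~\ref{Proposition61} turns $(ii)$ (for $d=1$) into $\mathcal{S}_{\{\mathcal{M}_{\omega}\}}(\RR)\subseteq\mathcal{S}_{\{\mathcal{M}_{\sigma}\}}(\RR)$ with continuous inclusion, Theorem~\ref{Gelfand-inclusion2} gives $\mathcal{M}_{\omega}\{\preceq\}\mathcal{M}_{\sigma}$, and the dictionary returns $\omega\preceq\sigma$.

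I expect the only genuinely non-formal step to be the dictionary $\omega\preceq\sigma\Leftrightarrow\mathcal{M}_{\omega}\{\preceq\}\mathcal{M}_{\sigma}$, i.e.\ keeping track of the $\lambda$-dependence of $\mathbf{W}^{(\lambda)}$ and absorbing the dilation constants generated by iterating $(\alpha)$; everything else is bookkeeping on top of Theorem~\ref{Gelfand-inclusion2} and Proposition~\ref{Proposition61}. One small point worth noting is that Theorem~\ref{Gelfand-inclusion2} imposes standard log-convexity and Roumieu non-quasianalyticity only on the \emph{first} matrix, which here is $\mathcal{M}_{\omega}$; this matches the hypotheses ($\omega$ non-quasianalytic, $\sigma$ merely a general weight function), so nothing beyond ``$\sigma$ is a general weight function'' --- which is exactly what makes Proposition~\ref{Proposition61} applicable to $\sigma$ --- is required.
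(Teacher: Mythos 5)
Your proposal is correct and follows essentially the same route as the paper's proof: reduce to Theorem~\ref{Gelfand-inclusion2} via the associated weight matrices, using Proposition~\ref{Proposition61} for the topological identification and the equivalence $\omega\preceq\sigma\Leftrightarrow\mathcal{M}_{\omega}\{\preceq\}\mathcal{M}_{\sigma}$ (which the paper simply cites from \cite{compositionpaper}), together with the fact that non-quasianalyticity of $\omega$ makes $\mathcal{M}_{\omega}$ Roumieu non-quasianalytic. Your observation that the hypotheses of Theorem~\ref{Gelfand-inclusion2} are only needed for the first matrix, so that $\sigma$ may be a general weight function, matches the paper exactly.
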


\begin{proof}
We denote by $\mathcal{M}_{\omega}$ and $\mathcal{M}_{\sigma}$ the associated weight matrices. First recall that $\omega\preceq\sigma$ if and only if $\mathcal{M}_{\omega}\{\preceq\}\mathcal{M}_{\sigma}$, which follows by \cite[Prop. 4.6 $(1)$, Thm. 5.14 $(2)$, Lemma 5.16 $(1)$, Cor. 5.17 $(1)$]{compositionpaper}.

Second, by \cite[Proposition 5]{nuclearglobal2} we get that $$\mathcal{S}_{\{\mathcal{M}_{\omega}\}}(\RR^d)=\mathcal{S}_{\{\omega\}}(\RR^d)\subseteq\mathcal{S}_{\{\sigma\}}(\RR^d)=\mathcal{S}_{\{\mathcal{M}_{\sigma}\}}(\RR^d)$$ holds true for all dimensions $d\in\NN$ and the equalities hold as topological vector spaces.

Since both $\mathcal{M}_{\omega}$ and $\mathcal{M}_{\sigma}$ are standard log-convex and $\mathcal{M}_{\omega}$ is Roumieu non-quasianalytic, we can apply Theorem \ref{Gelfand-inclusion2} in order to conclude.
\end{proof}

\begin{remark}\label{nonquasiremark}
The different assumptions on the weights in Theorems \ref{Gelfand-inclusion3new} and \ref{Gelfand-inclusion3} should be compared:

\begin{itemize}
\item[$(i)$] On the one hand, in Theorem \ref{Gelfand-inclusion3} for the weight $\sigma$ we do not require necessarily condition $(\beta)$ in Definition \ref{defomega} since for \cite[Proposition 5]{nuclearglobal2} this requirement is not needed.

\item[$(ii)$] However, for the weight $\omega$ we require the assumption of non-quasianalyticity,
which is stronger than condition $(\beta)$ because $\int_{t}^{\infty}\frac{\omega(u)}{u^2}du\ge\omega(t)\int_{t}^{\infty}\frac{1}{u^2}du=\frac{\omega(t)}{t}$ and so we get that $\eqref{omeganonquasi}\Rightarrow\omega(t)=o(t)\Rightarrow\omega(t)=o(t^2)\Rightarrow(\beta)$.
\end{itemize}
\end{remark}

Following analogous arguments and replacing for the larger weight structure the Roumieu- by the Beurling-type classes, it is immediate to obtain the next result:

\begin{theorem}\label{Gelfand-inclusion4}
We have the following:

\begin{itemize}
\item[$(I)$] Let $\mathbf{M}\in{\mathcal{LC}}$ be non-quasianalytic and $\mathbf{N}\in\RR_{>0}^{\NN}$ be arbitrary. Consider the following assertions:
\begin{itemize}
	\item[$(i)$] $\mathbf{M}\vartriangleleft\mathbf{N}$,
	
	\item[$(ii$)] $\mathcal{S}_{\{\mathbf{M}\}}(\RR^d)\subseteq\mathcal{S}_{(\mathbf{N})}(\RR^d)$ with continuous inclusion.
\end{itemize}

\item[$(II)$] Let $\mathcal{M}:=\{\mathbf{M}^{(\lambda)}: \lambda>0\}$ be standard log-convex and Roumieu non-quasianalytic, moreover let $\mathcal{N}:=\{\mathbf{N}^{(\lambda)}: \lambda>0\}$ be arbitrary. Consider the following assertions:
	\begin{itemize}
		\item[$(i)$] $\mathcal{M}\vartriangleleft\mathcal{N}$,
		
		\item[$(ii$)] $\mathcal{S}_{\{\mathcal{M}\}}(\RR^d)\subseteq\mathcal{S}_{(\mathcal{N})}(\RR^d)$ with continuous inclusion.
	\end{itemize}

\item[$(III)$] Let $\omega$ be a non-quasianalytic weight function and $\sigma$ be a (general) weight function. Consider the following assertions:
	\begin{itemize}
		\item[$(i)$] $\omega\vartriangleleft\sigma$,

		\item[$(ii$)] $\mathcal{S}_{\{\omega\}}(\RR^d)\subseteq\mathcal{S}_{(\sigma)}(\RR^d)$ with continuous inclusion.
	\end{itemize}
\end{itemize}
Then $(i)\Rightarrow(ii)$ is valid for arbitrary weight sequences (resp. matrices, weight functions) and the spaces in $(ii)$ can be defined by joint or separated growth at infinity. For the implications $(ii)\Rightarrow(i)$ only the inclusion for $d=1$ is required.
\end{theorem}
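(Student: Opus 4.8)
The plan is to follow the arguments of Theorems \ref{Gelfand-inclusion1}, \ref{Gelfand-inclusion2} and \ref{Gelfand-inclusion3}, replacing in the ``larger'' weight structure the Roumieu-type class by the Beurling-type one; the decisive point is that the test function $\psi_{\mathbf{M}}$ (resp.\ $\psi_{\mathbf{M}^{(\lambda)}}$) constructed above, which lies in $\mathcal{S}_{\{\mathbf{M}\}}(\RR)$ and satisfies $|\psi_{\mathbf{M}}^{(j)}(0)|\ge M_j$ for all $j\in\NN$, forces the \emph{strict} relation $\vartriangleleft$ rather than merely $\preceq$, because membership in a Beurling-type space yields estimates valid for \emph{every} $h>0$.

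For $(i)\Rightarrow(ii)$ in all three parts I would argue directly from the definitions: if $\mathbf{M}\vartriangleleft\mathbf{N}$ (resp.\ $\mathcal{M}\vartriangleleft\mathcal{N}$, resp.\ $\omega\vartriangleleft\sigma$) then for each $h>0$ the quotient $M_{\alpha+\beta}/N_{\alpha+\beta}$ can be absorbed into $h^{|\alpha+\beta|}$, so every $f\in\mathcal{S}_{\{\mathbf{M}\}}$ satisfies the Beurling-type estimates for $\mathbf{N}$, and continuity of the inclusion follows in the same way; this works verbatim for the separated-growth description (using $\mathbf{M}\vartriangleleft\mathbf{N}$ directly) and for arbitrary sequences/matrices, since no structural hypothesis is used. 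For part $(III)$ one additionally invokes $\omega\vartriangleleft\sigma\Leftrightarrow\mathcal{M}_{\omega}\vartriangleleft\mathcal{M}_{\sigma}$ together with Proposition \ref{Proposition61}.

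For $(ii)\Rightarrow(i)$ in part $(I)$: since $\mathbf{M}\in{\mathcal{LC}}$ is non-quasianalytic, $\psi_{\mathbf{M}}\in\mathcal{D}_{\{\mathbf{M}\}}(\RR)\subseteq\mathcal{S}_{\{\mathbf{M}\}}(\RR)$ is available (also in the separated-growth sense, because $\psi_{\mathbf{M}}$ has compact support and $\mathbf{M}$ is nondecreasing with $M_0=1$); then $(ii)$ in dimension $d=1$ gives $\psi_{\mathbf{M}}\in\mathcal{S}_{(\mathbf{N})}(\RR)$, so for every $h>0$ there is $C_h>0$ with $|\psi_{\mathbf{M}}^{(j)}(x)|\le C_hh^jN_j$ on $\RR$; evaluation at $x=0$ yields $M_j\le C_hh^jN_j$ for all $j$, i.e.\ $\mathbf{M}\vartriangleleft\mathbf{N}$. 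Part $(II)$ is the same applied to each $\psi_{\mathbf{M}^{(\lambda)}}$, after ``not considering'' the possibly quasianalytic $\mathbf{M}^{(\lambda)}$ with $\lambda<\lambda_0$ (which does not change the class): from $\psi_{\mathbf{M}^{(\lambda)}}\in\mathcal{S}_{\{\mathbf{M}^{(\lambda)}\}}\subseteq\mathcal{S}_{\{\mathcal{M}\}}\subseteq\mathcal{S}_{(\mathcal{N})}=\bigcap_{\kappa>0}\mathcal{S}_{(\mathbf{N}^{(\kappa)})}$ one gets $M^{(\lambda)}_j\le C_{\kappa,h}h^jN^{(\kappa)}_j$ for all $\kappa>0$, $h>0$ and $j$, that is $\mathbf{M}^{(\lambda)}\vartriangleleft\mathbf{N}^{(\kappa)}$ for all $\lambda,\kappa>0$, which is $\mathcal{M}\vartriangleleft\mathcal{N}$. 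Part $(III)$ reduces to part $(II)$: $\mathcal{M}_{\omega}$ is standard log-convex by Lemma \ref{Lemma61} and Roumieu non-quasianalytic because $\omega$ is non-quasianalytic (by \cite[Cor.\ 4.8]{testfunctioncharacterization}), one has $\mathcal{S}_{\{\omega\}}=\mathcal{S}_{\{\mathcal{M}_{\omega}\}}$ and $\mathcal{S}_{(\sigma)}=\mathcal{S}_{(\mathcal{M}_{\sigma})}$ by Proposition \ref{Proposition61}, and $\mathcal{M}_{\omega}\vartriangleleft\mathcal{M}_{\sigma}$ is equivalent to $\omega\vartriangleleft\sigma$.

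I expect no serious obstacle. The only points requiring a little care are: checking that $\psi_{\mathbf{M}}$ (resp.\ $\psi_{\mathbf{M}^{(\lambda)}}$) still belongs to the relevant class when the spaces are described by separated growth at infinity, which is immediate from compact support and $M_{\alpha}\ge M_0=1$; and, in part $(III)$, quoting the precise equivalence $\omega\vartriangleleft\sigma\Leftrightarrow\mathcal{M}_{\omega}\vartriangleleft\mathcal{M}_{\sigma}$ from \cite{compositionpaper} in the ``$\vartriangleleft$''-form, analogous to the ``$\preceq$''-version used in the proof of Theorem \ref{Gelfand-inclusion3}.
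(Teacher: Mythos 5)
Your proposal is correct and is exactly the argument the paper intends: the paper gives no separate proof of Theorem \ref{Gelfand-inclusion4}, stating only that it follows by repeating the proofs of Theorems \ref{Gelfand-inclusion1}--\ref{Gelfand-inclusion3} with the larger Roumieu class replaced by the Beurling one, and your write-up fills in precisely those details (the characteristic function $\psi_{\mathbf{M}}$ evaluated at $0$ against the for-all-$h$ Beurling estimates yielding $\vartriangleleft$ instead of $\preceq$, the reduction of $(III)$ to $(II)$ via $\mathcal{M}_{\omega}$, and the discarding of quasianalytic small indices in $(II)$). No gaps.
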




\subsection{The Beurling case}\label{Beurlingsquaresection}
We call a standard log-convex weight matrix $\mathcal{M}=\{\mathbf{M}^{(\lambda)}: \lambda>0\}$ {\itshape Beurling non-quasianalytic,} when for all $\lambda>0$ the sequence $\mathbf{M}^{(\lambda)}$ is non-quasianalytic. This definition is justified by \cite[Thm. 4.1, Sect. 4.6]{testfunctioncharacterization}: A countable intersection of non-quasianalytic ultradifferentiable classes (with totally ordered weight sequences) is still non-quasianalytic. So, if $\mathcal{M}$ is standard log-convex and {\itshape Beurling non-quasianalytic,} then $$\mathcal{D}_{(\mathcal{M})}:=\mathcal{D}\cap\mathcal{E}_{(\mathcal{M})}=\mathcal{D}\cap\bigcap_{\lambda>0}\mathcal{E}_{(\mathbf{M}^{(\lambda)})}=\mathcal{D}\cap\bigcap_{n\in\NN}\mathcal{E}_{(\mathbf{M}^{(1/n)})}$$ is non-trivial. More precisely, by \cite[Prop. 4.7 $(i)$, Prop. 4.4]{testfunctioncharacterization} we know that there exists $L\in{\mathcal{LC}}$ such that $L$ is non-quasianalytic and $L\vartriangleleft\mathcal{M}$. (Recall that for huge intersections this statement will fail in general, so an uncountable intersection of non-quasianalytic classes will be quasianalytic in general.) Finally, by the comments given in the previous section we see that for any given weight function $\omega$ the associated matrix $\mathcal{M}_{\omega}$ will be Beurling non-quasianalytic if and only if $\omega$ is non-quasianalytic, i.e. in the weight function approach both matrix notions of non-quasianalyticity do coincide.\vspace{6pt}

Let us now consider the following Beurling-type condition:
\begin{equation}\label{Beurlingsquare}
\forall\;\lambda>0\;\exists\;\kappa>0\;\exists\;A\ge 1\;\forall\;p\in\NN_0:\;\;\;(M^{(\kappa)}_p)^2\le A^pM^{(\lambda)}_p.
\end{equation}
It is immediate to see that $\mathcal{M}=\{\mathbf{M}\}$, $\mathbf{M}\in{\mathcal{LC}}$, can never satisfy \eqref{Beurlingsquare} because this would yield $\sup_{p\ge 1}(M_p)^{1/p}<+\infty$.

Moreover, if $\mathcal{M}$ is standard log-convex and satisfies \eqref{12L2B}, then for all $\lambda>0$ there exist $H>0$ and $B>0$ such that $j^{j/2}\leq BH^{j}M^{(\lambda)}_{j}$ for all $j\in\NN_0$, i.e. $\mathbf{G}^{1/2}\preceq\mathbf{M}^{(\lambda)}$. Thus it is immediate to see that any standard log-convex matrix having \eqref{Beurlingsquare} and \eqref{12L2B} is Beurling non-quasianalytic.
Condition
\eqref{12L2B} should be considered as a standard assumption when dealing with $\mathcal{S}_{(\mathcal{M})}$, e.g. see \cite[Prop. 3]{nuclearglobal2}.\vspace{6pt}

Now we are ready to state the following result which is analogous to Theorem \ref{Gelfand-inclusion2}.

\begin{theorem}\label{Beurlingprop}
Let $\mathcal{M}$ and $\mathcal{N}$ be arbitrary and consider the following assertions:
	\begin{itemize}
		\item[$(i)$] $\mathcal{M}(\preceq)\mathcal{N}$,
		
		\item[$(ii$)] $\mathcal{S}_{(\mathcal{M})}(\RR^d)\subseteq\mathcal{S}_{(\mathcal{N})}(\RR^d)$ is valid with continuous inclusion.
	\end{itemize}
Then $(i)\Rightarrow(ii)$ and the spaces in $(ii)$ can be defined by joint or separated growth at infinity. If both matrices are standard log-convex and if we assume that $\mathcal{M}$ is Beurling non-quasianalytic and satisfies \eqref{Beurlingsquare} and if $(ii)$ holds for the case $d=1$, then $(ii)\Rightarrow(i)$ is valid, too.
\end{theorem}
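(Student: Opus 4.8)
The plan is to prove $(ii)\Rightarrow(i)$ along the lines of Theorem \ref{Gelfand-inclusion2}, with the two extra hypotheses on $\mathcal{M}$ playing complementary roles: Beurling non\-quasianalyticity guarantees that $\mathcal{S}_{(\mathcal{M})}(\RR)$ contains enough test functions, while \eqref{Beurlingsquare} supplies the multiplicative room inside the matrix that is needed to pass from the behaviour of such a test function to the relation $\mathcal{M}(\preceq)\mathcal{N}$. The implication $(i)\Rightarrow(ii)$ is routine: given $\lambda,h>0$, choose $\kappa>0$ and $C\ge1$ with $M^{(\kappa)}_\alpha\le C^{|\alpha|}N^{(\lambda)}_\alpha$ and estimate $\|f\|_{\infty,\mathbf{N}^{(\lambda)},h}\le\|f\|_{\infty,\mathbf{M}^{(\kappa)},h/C}$, which is finite for $f\in\mathcal{S}_{(\mathcal{M})}(\RR^d)$; this yields both the inclusion and its continuity, for all $d$, and the passage between joint and separated growth is harmless since under the matrix log-convexity recalled in \S\ref{notation} the two seminorm systems are equivalent. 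So the content is $(ii)\Rightarrow(i)$, which we prove in dimension $d=1$, so that all weight sequences involved lie in ${\mathcal{LC}}$.

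The first step is to read off a consequence of $(ii)$ in terms of test functions. Let $\mathbf{L}\in{\mathcal{LC}}$ be \emph{any} non-quasianalytic sequence with $\mathbf{L}\vartriangleleft\mathcal{M}$; such $\mathbf{L}$ exist precisely because $\mathcal{M}$ is Beurling non-quasianalytic (cf.\ \cite[Prop.~4.7 $(i)$, Prop.~4.4]{testfunctioncharacterization}). As in the Roumieu case above, the function $\psi_{\mathbf{L}}=\theta_{\mathbf{L}}\cdot\phi$ (with $\theta_{\mathbf{L}}$ a characteristic function for $\mathbf{L}$, taken real-valued with $\theta_{\mathbf{L}}^{(j)}(0)\ge L_j$, and $\phi$ a test function with $\phi^{(j)}(0)=\delta_{j,0}$) satisfies $\psi_{\mathbf{L}}\in\mathcal{D}_{\{\mathbf{L}\}}(\RR)$ and $\psi_{\mathbf{L}}^{(j)}(0)\ge L_j$ for all $j$. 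Since $\mathbf{L}\vartriangleleft\mathcal{M}$ we have $\mathcal{D}_{\{\mathbf{L}\}}\subseteq\mathcal{D}_{(\mathbf{M}^{(\tau)})}\subseteq\mathcal{S}_{(\mathbf{M}^{(\tau)})}$ for every $\tau>0$ (the last inclusion uses $\supp\psi_{\mathbf{L}}$ compact together with $\lim_p(M^{(\tau)}_p)^{1/p}=\infty$), hence $\psi_{\mathbf{L}}\in\mathcal{S}_{(\mathcal{M})}(\RR)$, and $(ii)$ gives $\psi_{\mathbf{L}}\in\mathcal{S}_{(\mathcal{N})}(\RR)$. Evaluating at the origin, for each $\tau,h>0$ there is $C_{\tau,h}$ with $L_j\le\psi_{\mathbf{L}}^{(j)}(0)\le\|\psi_{\mathbf{L}}^{(j)}\|_\infty\le C_{\tau,h}h^{j}N^{(\tau)}_j$; choosing $\tau=\lambda$ (any fixed index) gives $\mathbf{L}\preceq\mathbf{N}^{(\lambda)}$. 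Therefore $(ii)$ forces: for every $\lambda>0$, every non-quasianalytic $\mathbf{L}\in{\mathcal{LC}}$ with $\mathbf{L}\vartriangleleft\mathcal{M}$ satisfies $\mathbf{L}\preceq\mathbf{N}^{(\lambda)}$.

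The second step is to deduce $\mathcal{M}(\preceq)\mathcal{N}$ from this consequence, and for this I would prove the following purely sequence-theoretic statement and use it in contrapositive form: if $\mathbf{M}^{(\kappa)}\not\preceq\mathbf{N}^{(\lambda)}$ for every $\kappa>0$, then there is a non-quasianalytic $\mathbf{L}\in{\mathcal{LC}}$ with $\mathbf{L}\vartriangleleft\mathcal{M}$ but $\mathbf{L}\not\preceq\mathbf{N}^{(\lambda)}$ (which contradicts the previous paragraph, so $\mathcal{M}(\preceq)\mathcal{N}$ follows). To construct $\mathbf{L}$ I would work, as in Section \ref{oscillating}, through the quotients: fix $\tau_1>\tau_2>\cdots\downarrow0$; by \eqref{Beurlingsquare}, after passing to a subsequence, one may assume $(M^{(\tau_{n+1})}_p)^2\le A_n^{p}M^{(\tau_n)}_p$, whence by iteration $M^{(\tau_n)}_p\le B_{m,n}^{p}\bigl(M^{(\tau_m)}_p\bigr)^{2^{m-n}}$ for $m<n$, so that $(M^{(\tau_n)}_p/M^{(\tau_m)}_p)^{1/p}$ tends to $0$ at a rate that can be made arbitrarily fast by taking the indices far apart; moreover, since $\mathbf{M}^{(\tau_n)}\not\preceq\mathbf{N}^{(\lambda)}$, for each $n$ there are arbitrarily large $p$ with $(M^{(\tau_n)}_p/N^{(\lambda)}_p)^{1/p}>n$. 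One then lets $\mathbf{L}$ agree with a high-index member $\mathbf{M}^{(\tau_{n})}$ only at a very sparse set of indices $p$ (``spikes''), interpolating log-convexly in between and keeping the quotients nondecreasing. The exponential (square-root) spacing granted by \eqref{Beurlingsquare} is exactly what leaves enough room for these spikes to give $\limsup_p(L_p/N^{(\lambda)}_p)^{1/p}=\infty$ while still $(L_p/M^{(\tau)}_p)^{1/p}\to0$ for every fixed $\tau$; Beurling non-quasianalyticity ensures, at the same time, that $\sum_p L_p^{-1/p}<\infty$ can be maintained along and between the spikes. This finishes the scheme; the Beurling versions of the mixed and weight-function statements (analogues of Theorems \ref{Gelfand-inclusion3} and \ref{Gelfand-inclusion4}) follow by the same argument.

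The step I expect to be the main obstacle is precisely this last sequence construction. Unlike in the Roumieu case, a compactly supported characteristic function only certifies a sequence lying strictly below the \emph{entire} matrix, so recovering an actual matrix entry $\mathbf{M}^{(\kappa)}$ is impossible by evaluation alone and must be done through the ``spiking'' trick; the delicate point is to balance the three competing demands on $\mathbf{L}$ — log-convexity (nondecreasing quotients), $\mathbf{L}\vartriangleleft\mathbf{M}^{(\tau)}$ for \emph{every} $\tau$, and non-quasianalyticity — which is feasible only because \eqref{Beurlingsquare} forces exponentially large gaps between the sequences $\mathbf{M}^{(\tau)}$, and carrying this out rigorously (choosing the subsequence $(\tau_n)$, the spike locations, and the interpolation so that all three conditions survive) is where the real work lies.
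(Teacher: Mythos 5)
Your implication $(i)\Rightarrow(ii)$ and your first step of $(ii)\Rightarrow(i)$ (producing, for every non-quasianalytic $\mathbf{L}\in{\mathcal{LC}}$ with $\mathbf{L}\vartriangleleft\mathcal{M}$, the function $\psi_{\mathbf{L}}\in\mathcal{D}_{\{\mathbf{L}\}}\subseteq\mathcal{S}_{(\mathcal{M})}$ and concluding $\mathbf{L}\preceq\mathbf{N}^{(\lambda)}$ for all $\lambda$) are fine. But the whole weight of the argument is then shifted onto the sequence-theoretic lemma of your second step, and that lemma is neither proved nor correctly sketched; this is a genuine gap. Concretely: your ``spike'' construction puts $L_{q_n}\approx M^{(\tau_n)}_{q_n}$ at sparse indices and interpolates log-linearly in between, but log-linear interpolation does \emph{not} preserve $\mathbf{L}\vartriangleleft\mathbf{M}^{(\tau_m)}$ (nor even $\mathbf{L}\le\mathbf{M}^{(\tau_m)}$) on the interior of a long gap $[q_n,q_{n+1}]$: the function $p\mapsto\log M^{(\tau_m)}_p$ is convex, hence lies \emph{below} the chord joining its values at $q_n$ and $q_{n+1}$, and it can lie below that chord by an amount growing faster than the margin $\log M^{(\tau_m)}_{q_i}-\log L_{q_i}$ at the endpoints; so the chord defining $\log L_p$ can overshoot $\log M^{(\tau_m)}_p$ in the middle of the gap. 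Repairing this (interpolating along the matrix rather than linearly, while keeping the quotients of $\mathbf{L}$ nondecreasing and retaining the spike values) is exactly the hard part, and you acknowledge but do not carry it out; in addition, your use of \eqref{Beurlingsquare} only through the qualitative separation $\mathbf{M}^{(\tau_{n+1})}\vartriangleleft\mathbf{M}^{(\tau_n)}$ misses the quantitative role that condition actually has to play.

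For comparison, the paper avoids any such diagonal-sequence construction by applying the continuity estimate $\|f\|_{\infty,\mathbf{N}^{(\lambda)},1}\le C\|f\|_{\infty,\mathbf{M}^{(\kappa)},h_1}$ to the modulated cut-offs $g_{a,t}(x)=e^{itx}\phi_a(x)$, $t\ge1$, with $\phi_a\in\mathcal{D}_{(\mathcal{M})}$, $\phi_a^{(j)}(0)=\delta_{j,0}$ (this is where Beurling non-quasianalyticity is used). Since $g_{a,t}^{(k)}(0)=(it)^k$, the left-hand side is exactly $\exp\bigl(\omega_{\mathbf{N}^{(\lambda)}}(t)\bigr)$; on the right-hand side the Leibniz rule produces $\sup_k (2t)^kM^{(\kappa_1)}_k/(h_1^kM^{(\kappa)}_k)$, and \eqref{Beurlingsquare} in the form $(M^{(\kappa_1)}_k)^2\le A^kM^{(\kappa)}_k$ is used precisely to turn this into $\sup_k(2At/h_1)^k/M^{(\kappa_1)}_k=\exp\bigl(\omega_{\mathbf{M}^{(\kappa_1)}}(2At/h_1)\bigr)$. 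The resulting inequality between associated functions is converted into $\mathbf{M}^{(\kappa_1)}\preceq\mathbf{N}^{(\lambda)}$ via \eqref{Prop32Komatsu}. If you want to salvage your route, you must either prove the diagonal-sequence lemma in full (including a correct interpolation; note that non-quasianalyticity of $\mathbf{L}$ can then be restored by taking the pointwise maximum with the non-quasianalytic minorant $\mathbf{L}_0\vartriangleleft\mathcal{M}$ from \cite[Prop.~4.7]{testfunctioncharacterization}, since a maximum of log-convex sequences is log-convex), or switch to the modulation argument.
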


\begin{proof}
$(i)\Rightarrow(ii)$ is again clear by the definition of the spaces.\vspace{6pt}

$(ii)\Rightarrow(i)$ We follow the  ideas of the proof given in \cite[Prop. 4.6 $(2)$]{compositionpaper} which is based on techniques developed in \cite[Sect. 2]{Bruna} (for the single weight sequence case).

By the continuous inclusion $\mathcal{S}_{(\mathcal{M})}(\RR)\subseteq\mathcal{S}_{(\mathcal{N})}(\RR)$ we get the following:
\begin{equation}\label{Beurlingpropequ}
\forall\;\lambda>0\;\forall\;h>0\;\exists\;\kappa>0\;\exists\;C,\;h_1>0\;\forall\;f\in\mathcal{S}_{(\mathcal{M})}(\RR):\;\;\;\|f\|_{\infty,\mathbf{N}^{(\lambda)},h}\le C\|f\|_{\infty,\mathbf{M}^{(\kappa)},h_1}.
\end{equation}
We will apply \eqref{Beurlingpropequ} for $h=1$ and to the following family of functions. For each $a>0$, arbitrary but from now on fixed, we consider a function $\phi_a\in\mathcal{D}_{(\mathcal{M})}$ with $\supp(\phi_a)\subseteq[-a,a]$ and $\phi_a^{(j)}(0)=\delta_{j,0}$: The existence of such functions follows again by \cite[Thm. 2.2]{petzsche}, more precisely we take $\phi_a\in\mathcal{D}_{\{L\}}\subseteq\mathcal{D}_{(\mathcal{M})}$ with $L\vartriangleleft\mathcal{M}$ denoting the non-quasianalytic sequence mentioned before. (Here we use the fact that $\mathcal{M}$ is Beurling non-quasianalytic.) Moreover, for $t\ge 0$ and $x\in\RR$ we set $f_t(x):=\exp(itx)$ and finally
$$g_{a,t}(x):=f_t(x)\cdot\phi_a(x).$$
First, let us prove that $g_{a,t}\in\mathcal{S}_{(\mathcal{M})}$. Let $t>0$ be fixed (the case $t=0$ is trivial), then note that for all $h,\lambda>0$ (small) there exists some $C_{h,\lambda}\ge 1$ (large) such that $|f_t^{(j)}(x)|=t^j\le C_{h,\lambda}h^jM^{(\lambda)}_j$ for all $j\in\NN_0$ because $\lim_{j\rightarrow\infty}(M^{(\lambda)}_j)^{1/j}=+\infty$ is valid. This proves that $f_t\in\mathcal{E}_{(\mathcal{M})}$ and the estimates hold globally on the whole $\RR$.

We have that $\supp(g_{a,t})\subseteq[-a,a]$ for all $t\ge 0$. We fix $t\ge 0$ and $a>0$ and estimate for all $j\in\NN_0$, $\lambda,h>0$ and $x\in[-a,a]$ as follows:
\beqsn
|g_{a,t}^{(j)}(x)|=&&\left|\sum_{k=0}^{j}\binom{j}{k}f_t^{(j-k)}(x)\cdot\phi_a^{(k)}(x)\right|\le\sum_{k=0}^{j}\binom{j}{k}|f_t^{(j-k)}(x)||\phi_a^{(k)}(x)|\\
=&&\sum_{k=0}^{j}\binom{j}{k}t^{j-k}|\phi_a^{(k)}(x)|
\le\sum_{k=0}^{j}\binom{j}{k}C'_{h,\lambda}h^{j-k}M^{(\lambda)}_{j-k}\cdot C_{h,\lambda}h^kM^{(\lambda)}_k\\
&&\le C''_{h,\lambda}M^{(\lambda)}_j\sum_{k=0}^{j}\binom{j}{k}h^{j-k}h^k=C''_{h,\lambda}(2h)^jM^{(\lambda)}_j,
\eeqsn
because by normalization and log-convexity we have $M^{(\lambda)}_{j-k}M^{(\lambda)}_k\le M^{(\lambda)}_j$ for all $j,k\in\NN_0$ with $k\le j$ and for each $\lambda$.

Thus $g_{a,t}\in\mathcal{D}_{(\mathcal{M})}\subseteq\mathcal{S}_{(\mathcal{M})}$ and we are able to apply \eqref{Beurlingpropequ} to this family (with $h=1$ in \eqref{Beurlingpropequ} and set $a:=h_1(\le 1)$).\vspace{6pt}

According to the index $\kappa$ arising in \eqref{Beurlingpropequ}, by applying \eqref{Beurlingsquare} we get an index $\kappa_1$ and $A\ge 1$ such that $(M^{(\kappa_1)}_p)^2\le A^pM^{(\kappa)}_p$ for any $p\in\NN_0$.

Using this preparation we start now by estimating the right-hand side in \eqref{Beurlingpropequ} for all $t\ge 1$ (and by using the seminorms with the joint growth control at infinity):
\beqs
\nonumber
C\|g_{h_1,t}\|_{\infty,\mathbf{M}^{(\kappa)},h_1}=&&C\sup_{j,k\in\NN_0}\sup_{x\in\RR}\frac{|x^jg_{h_1,t}^{(k)}(x)|}{h_1^{j+k}M^{(\kappa)}_{j+k}}=C\sup_{j,k\in\NN_0}\sup_{x\in[-h_1,h_1]}\frac{|x^jg_{h_1,t}^{(k)}(x)|}{h_1^{j+k}M^{(\kappa)}_{j+k}}
\\
\nonumber
\le&& C\sup_{j,k\in\NN_0}\sup_{x\in[-h_1,h_1]}\frac{h_1^j|g_{h_1,t}^{(k)}(x)|}{h_1^{j+k}M^{(\kappa)}_{j+k}}=C\sup_{j,k\in\NN_0}\sup_{x\in[-h_1,h_1]}\frac{|g_{h_1,t}^{(k)}(x)|}{h_1^{k}M^{(\kappa)}_{j+k}}\\
\label{underbracestar}
\leq&&
CC_{h_1,\kappa_1}\sup_{j,k\in\NN_0}\frac{t^k(1+h_1)^kM^{(\kappa_1)}_k}{h_1^kM^{(\kappa)}_{j+k}}
\le CC_{h_1,\kappa_1}\sup_{k\in\NN_0}\frac{(2t)^kM^{(\kappa_1)}_k}{h_1^kM^{(\kappa)}_k}\\
\nonumber
\le&& CC_{h_1,\kappa_1}\sup_{k\in\NN_0}\frac{(2tA)^k}{h_1^kM^{(\kappa_1)}_k}=CC_{h_1,\kappa_1}\exp\left(\omega_{\mathbf{M}^{(\kappa_1)}}(2At/h_1)\right).
\eeqs
For the estimate of \eqref{underbracestar} we argued as follows: Since $\phi_{h_1}\in\mathcal{D}_{(\mathcal{M})}\subseteq\mathcal{S}_{(\mathcal{M})}(\RR)$, we get
\beqsn
|g_{h_1,t}^{(k)}(x)|\le&&\sum_{l=0}^k\binom{k}{l}t^l|\phi_{h_1}^{(k-l)}(x)|\le\sum_{l=0}^k\binom{k}{l}t^lC_{h_1,\kappa_1}h_1^{k-l}M^{(\kappa_1)}_{k-l}\\
\le&& C_{h_1,\kappa_1}M^{(\kappa_1)}_kt^k\sum_{l=0}^k\binom{k}{l}h_1^{k-l}=C_{h_1,\kappa_1}t^kM^{(\kappa_1)}_k(1+h_1)^k.
\eeqsn
Note that by log-convexity and normalization $M^{(\kappa_1)}_{k-l}\le M^{(\kappa_1)}_k$, i.e. each sequence is increasing and since we are dealing with the Beurling case we will have $0<h_1\le 1$ (small). Moreover note that we have estimated by $\frac{1}{M^{(\kappa)}_{j+k}}\le\frac{1}{M^{(\kappa)}_{k}}$ for any $j,k\in\NN_0$ and any index $\kappa$, so if we would consider separated growth control at infinity we would estimate at this step by $\frac{1}{M^{(\kappa)}_{k}M^{(\kappa)}_{j}}\le\frac{1}{M^{(\kappa)}_{k}}$.\vspace{6pt}

We continue now with the left-hand side in \eqref{Beurlingpropequ} and get
\begin{align*}
\|g_{h_1,t}\|_{\infty,\mathbf{N}^{(\lambda)},1}&=\sup_{j,k\in\NN_0}\sup_{x\in\RR}\frac{|x^jg_{h_1,t}^{(k)}(x)|}{N^{(\lambda)}_{j+k}}\underbrace{\ge}_{x=0=j}\sup_{k\in\NN_0}\frac{|g_{h_1,t}^{(k)}(0)|}{N^{(\lambda)}_{k}}=\sup_{k\in\NN_0}\frac{t^k}{N^{(\lambda)}_{k}}=\exp\left(\omega_{\mathbf{N}^{(\lambda)}}(t)\right),
\end{align*}
and the estimate is precisely the same when considering separated growth control at infinity by normalization. Summarizing, we have shown that \eqref{Beurlingpropequ} yields the following:
$$\forall\;\lambda>0\;\exists\;\kappa_1>0\;\exists\;C,A,h_1>0\;\forall\;t\ge 1:\;\;\;\exp\left(\omega_{\mathbf{N}^{(\lambda)}}(t)\right)\le C\exp\left(\omega_{\mathbf{M}^{(\kappa_1)}}(2At/h_1)\right).$$
Since $\mathbf{N}^{(\lambda)}\in{\mathcal{LC}}$ (for any $\lambda>0$) we get $\omega_{\mathbf{N}^{(\lambda)}}(t)=0$ for all $t\in[0,1]$, recalling $(V)$ in
Section~\ref{construction}. So the above estimate is valid for all $t\ge 0$. Finally, we are applying \eqref{Prop32Komatsu} and obtain for any $p\in\NN_0$ that
\beqsn
N^{(\lambda)}_p=&&\sup_{t\ge 0}\frac{t^p}{\exp(\omega_{\mathbf{N}^{(\lambda)}}(t))}\ge\frac{1}{C}\sup_{t\ge 0}\frac{t^p}{\exp(\omega_{\mathbf{M}^{(\kappa_1)}}(2At/h_1))}\\
=&&\frac{1}{C}\left(\frac{h_1}{2A}\right)^p\sup_{s\ge 0}\frac{s^p}{\exp(\omega_{\mathbf{M}^{(\kappa_1)}}(s))}
=\frac{1}{C}\left(\frac{h_1}{2A}\right)^pM^{(\kappa_1)}_p,
\eeqsn
which implies $\mathcal{M}(\preceq)\mathcal{N}$ and finishes the proof.
\end{proof}

Again, by involving the associated weight matrices, we can transfer Theorem \ref{Beurlingprop} to the weight function situation:

\begin{theorem}\label{Beurlingpropweightfct}
Let $\omega$ be a weight function with
\begin{equation}\label{om7}
\exists\;H>0:\;\;\;\omega(t^2)=O(\omega(Ht)),\;\;\;t\rightarrow+\infty,
\end{equation}
and $\sigma$ be a (general) weight function. Then the following are equivalent:
	\begin{itemize}
		\item[$(i)$] $\omega\preceq\sigma$,

		\item[$(ii$)] $\mathcal{S}_{(\omega)}(\RR^d)\subseteq\mathcal{S}_{(\sigma)}(\RR^d)$ holds true for all dimensions $d\in\NN$ with continuous inclusion.
	\end{itemize}
Again the spaces in $(ii)$ can be defined by joint or separated growth at infinity and $(i)\Rightarrow(ii)$ is valid for general weight functions $\omega$ and $\sigma$. For $(ii)\Rightarrow(i)$ only the inclusion for $d=1$ is required.
\end{theorem}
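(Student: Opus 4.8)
The plan is to reduce the statement to Theorem~\ref{Beurlingprop} by passing to the associated weight matrices $\mathcal{M}_{\omega}=\{\mathbf{W}^{(\lambda)}:\lambda>0\}$ and $\mathcal{M}_{\sigma}=\{\mathbf{V}^{(\lambda)}:\lambda>0\}$, exactly in the spirit in which Theorem~\ref{Gelfand-inclusion3} reduces the Roumieu weight‑function case to Theorem~\ref{Gelfand-inclusion2}. As always, $(i)\Rightarrow(ii)$ is immediate from the definition of the classes, for general weight functions and for both the joint and the separated description at infinity, so I concentrate on $(ii)\Rightarrow(i)$ and use $(ii)$ only for $d=1$. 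By Lemma~\ref{Lemma61} (together with $(\gamma)$, which forces $\lim_p(W^{(\lambda)}_p)^{1/p}=+\infty$) both matrices are standard log-convex, and by Proposition~\ref{Proposition61} — whose proof uses neither $(\beta)$ — one has $\mathcal{S}_{(\omega)}(\RR)=\mathcal{S}_{(\mathcal{M}_{\omega})}(\RR)$ and $\mathcal{S}_{(\sigma)}(\RR)=\mathcal{S}_{(\mathcal{M}_{\sigma})}(\RR)$ as topological vector spaces, so $(ii)$ becomes $\mathcal{S}_{(\mathcal{M}_{\omega})}(\RR)\subseteq\mathcal{S}_{(\mathcal{M}_{\sigma})}(\RR)$ continuously.

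To apply Theorem~\ref{Beurlingprop} with $\mathcal{M}\equiv\mathcal{M}_{\omega}$ and $\mathcal{N}\equiv\mathcal{M}_{\sigma}$ it remains to verify that $\mathcal{M}_{\omega}$ satisfies \eqref{Beurlingsquare} and is Beurling non-quasianalytic. For the first point I would use that $\omega_{\mathbf{W}^{(\mu)}}$ is equivalent to $\tfrac1\mu\omega$ for each $\mu>0$, and that — applying \eqref{Prop32Komatsu} to $\mathbf{W}^{(\lambda)}$ and to the still normalized and log-convex sequence $p\mapsto(W^{(\kappa)}_p)^2$, whose associated function is $t\mapsto 2\,\omega_{\mathbf{W}^{(\kappa)}}(\sqrt t)$ — the inequality $(W^{(\kappa)}_p)^2\le A^pW^{(\lambda)}_p$ for all $p$ is equivalent (up to harmless constants) to $\omega_{\mathbf{W}^{(\lambda)}}(t)\le 2\,\omega_{\mathbf{W}^{(\kappa)}}(\sqrt{At})+O(1)$; hence \eqref{Beurlingsquare} for $\mathcal{M}_{\omega}$ amounts to requiring, for every $\lambda$, indices $\kappa$ and $A$ with $\omega(t)\le c\,\tfrac{\lambda}{\kappa}\,\omega(\sqrt{At})+O(1)$, and with the choice $\kappa=2\lambda$ (after the substitution $t\mapsto t^2$) this is precisely \eqref{om7} with $H=\sqrt A$. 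For the second point I would observe that \eqref{om7} is rigid: writing $\varphi_{\omega}(s)=\omega(e^s)$ it gives $\varphi_{\omega}(2s)\le C\varphi_{\omega}(s)+C$ for large $s$ (the factor $H$ being absorbed by finitely many iterations of $(\alpha)$), whence $\varphi_{\omega}(s)=O(s^{k})$ for some $k$, i.e. $\omega(t)=O((\log t)^{k})$; in particular $\int_{1}^{\infty}\omega(t)t^{-2}\,dt<\infty$, so $\omega$ is non-quasianalytic and therefore $\mathcal{M}_{\omega}$ is Beurling non-quasianalytic by \cite[Cor.~4.8]{testfunctioncharacterization}. (Alternatively, a standard log-convex matrix satisfying \eqref{Beurlingsquare} and \eqref{12L2B} is automatically Beurling non-quasianalytic, and \eqref{12L2B} for $\mathcal{M}_{\omega}$ follows from $(\beta)$ as in the proof of Theorem~\ref{comparsion}.)

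Granting these two facts, Theorem~\ref{Beurlingprop} yields $\mathcal{M}_{\omega}(\preceq)\mathcal{M}_{\sigma}$ from the inclusion $\mathcal{S}_{(\mathcal{M}_{\omega})}(\RR)\subseteq\mathcal{S}_{(\mathcal{M}_{\sigma})}(\RR)$. Finally, for matrices of the form $\mathcal{M}_{\omega}$, $\mathcal{M}_{\sigma}$ one has $\mathcal{M}_{\omega}(\preceq)\mathcal{M}_{\sigma}\Longleftrightarrow\omega\preceq\sigma$: this is checked exactly as in the proof of Theorem~\ref{Gelfand-inclusion3}, now with the Beurling growth relation $(\preceq)$, using \cite[Prop.~4.6, Thm.~5.14, Lemma~5.16, Cor.~5.17]{compositionpaper} together with $\omega_{\mathbf{W}^{(\mu)}}\sim\tfrac1\mu\omega$. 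Since Proposition~\ref{Proposition61} and Theorem~\ref{Beurlingprop} both admit the joint and the separated description of the spaces and only the case $d=1$ was used, this completes the argument. The step I expect to be the real work is the translation of \eqref{om7} into \eqref{Beurlingsquare} for $\mathcal{M}_{\omega}$ — i.e. recognizing \eqref{om7} as the weight‑function form of the Beurling‑type condition \eqref{Beurlingsquare}, in analogy with the classical pair \eqref{omega6}/\eqref{M2}; the non‑quasianalyticity of $\mathcal{M}_{\omega}$ and the matrix‑to‑function comparison are then routine consequences of the scaling relation for $\omega_{\mathbf{W}^{(\mu)}}$ and of \eqref{Prop32Komatsu}.
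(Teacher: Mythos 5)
Your proposal follows essentially the same route as the paper: reduce to Theorem~\ref{Beurlingprop} via the associated weight matrices, using Proposition~\ref{Proposition61} for the identification of the spaces and the equivalence between \eqref{om7} for $\omega$ and \eqref{Beurlingsquare} for $\mathcal{M}_{\omega}$ together with the automatic non-quasianalyticity of $\omega$. The only difference is that the paper simply cites \cite[Appendix A, Lemma A.1]{sectorialextensions1} for these last two facts, whereas you sketch self-contained arguments for them; your sketches are sound (the associated function of $p\mapsto(W^{(\kappa)}_p)^2$ is indeed $t\mapsto 2\omega_{\mathbf{W}^{(\kappa)}}(\sqrt{t})$, and \eqref{om7} does force $\omega(t)=O((\log t)^k)$, hence \eqref{omeganonquasi}). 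One small correction: in the translation of \eqref{om7} into \eqref{Beurlingsquare} the index $\kappa$ must satisfy $\kappa<\lambda$ (otherwise $(W^{(\kappa)}_p)^2\le A^pW^{(\lambda)}_p$ would contradict $(W^{(\lambda)}_p)^{1/p}\to\infty$), so the choice should be $\kappa$ of order $2\lambda/C$ with $C$ the constant implicit in \eqref{om7}, not $\kappa=2\lambda$; this does not affect the argument, since only the implication \eqref{om7}$\Rightarrow$\eqref{Beurlingsquare} is needed.
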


\begin{proof}
We denote by $\mathcal{M}_{\omega}$ and $\mathcal{M}_{\sigma}$ the associated weight matrices. First recall that $\omega\preceq\sigma$ if and only if $\mathcal{M}_{\omega}(\preceq)\mathcal{M}_{\sigma}$, which follows by \cite[Prop. 4.6 $(1)$, Thm. 5.14 $(2)$, Lemma 5.16 $(1)$, Cor. 5.17 $(1)$]{compositionpaper}.

Second, by \cite[Proposition 5]{nuclearglobal2} we get that $$\mathcal{S}_{(\mathcal{M}_{\omega})}(\RR^d)=\mathcal{S}_{(\omega)}(\RR^d)\subseteq\mathcal{S}_{(\sigma)}(\RR^d)=\mathcal{S}_{(\mathcal{M}_{\sigma})}(\RR^d)$$ holds true for all dimensions $d\in\NN$ and the equalities hold as topological vector spaces.

In order to apply Theorem \ref{Beurlingprop} to the matrices $\mathcal{M}_{\omega}$ and $\mathcal{M}_{\sigma}$ we remark that by \cite[Appendix A]{sectorialextensions1},
we know that $\omega$ has \eqref{om7} if and only if $\mathcal{M}_{\omega}$ satisfies \eqref{Beurlingsquare} and $\omega$ is automatically non-quasianalytic, see \cite[Lemma A.1]{sectorialextensions1}.
\end{proof}

Let us finally remark that
in Theorems~\ref{Gelfand-inclusion2new}, \ref{Gelfand-inclusion2newmixed},
\ref{Gelfand-inclusionbeurlingmatrix} and \ref{Beurlingprop} we could avoid the assumption
that $\mathcal{N}$ is standard log-convex by substituting \eqref{Prop32Komatsu} with
\begin{equation}
N_p\geq\sup_{t\ge 0}\frac{t^p}{\exp(\omega_{\mathbf{N}}(t))},\;\;\;p\in\NN_0,
\end{equation}
which is always true (cf. \cite{BJOS-cvxminorant}).


\appendix

\section{On the weight $\omega(t)=\log(1+t)$ and the Schwartz class $\mathcal{S}$}\label{logweight}
The weight $\omega(t)=\log(1+t)$, $t\ge 0$, clearly satisfies the standard assumptions, $(\alpha)$ and $(\delta)$ in Definition \ref{defomega}, and $\omega(t)=o(t^2)$ which implies $(\beta)$. But the crucial property $(\gamma)$ (i.e. $\log(t)=o(\omega(t))$) fails and also \eqref{omega6} does not hold true.\vspace{6pt}

Note that $\omega$ is equivalent to $\omega_1(t):=\max\{0,\log(t)\}$ and the relation $\sim$ preserves all the properties listed in Definition \ref{defomega} except the convexity condition $(\delta)$ which is clear for $\omega_1$ as well. In addition $\omega_1$ is normalized, more precisely $\omega_1(t)=0$ for $0\le t\le 1$ and $\omega_1(t)=\log(t)$ for all $t\ge 1$. It is known that the weight $\omega$ yields the classical Schwartz class $\mathcal{S}$ and the aim of this appendix is to study for this limiting case the associated weight matrix.

More precisely we show that the weight matrix approach is not ``well-defined'' for the weights $a\log(1+t)$, $a>0$, because the matrix associated with such weights does not contain sequences of positive real numbers (as usually required).\vspace{6pt}

\begin{lemma}\label{appendixlemma1}
We get as topological vector spaces the identity $\mathcal{S}_{(\omega)}(\RR^d)=\mathcal{S}_{(\omega_1)}(\RR^d)=\mathcal{S}(\RR^d)$ for any dimension $d$. However, the analogous result for the Roumieu-type space fails and neither $\mathcal{M}_{\omega_1}$ nor $\mathcal{M}_{\omega}$ is a weight matrix as defined in Section \ref{weightfunctions}.
\end{lemma}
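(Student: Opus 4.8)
The plan is to make everything explicit by computing the Young conjugates and reading off the structure of the associated matrices directly. For $\omega_1$ one has $\varphi_{\omega_1}(t)=\omega_1(e^t)=\max\{0,t\}$, which equals $t$ on $[0,\infty)$, so $\varphi^*_{\omega_1}(s)=\sup_{t\ge 0}\{t(s-1)\}$; hence $\varphi^*_{\omega_1}(s)=0$ for $0\le s\le 1$ and $\varphi^*_{\omega_1}(s)=+\infty$ for $s>1$. For $\omega$ one has $\varphi_\omega(t)=\log(1+e^t)$, and the elementary bounds $t<\log(1+e^t)\le t+\log 2$ give $\varphi^*_\omega(s)\in[-\log 2,0]$ for $0\le s\le 1$ and $\varphi^*_\omega(s)=+\infty$ for $s>1$. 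In both cases $W^{(\lambda)}_\alpha=e^{\frac1\lambda\varphi^*(\lambda|\alpha|)}$ is a positive finite number when $\lambda|\alpha|\le 1$ and equals $+\infty$ when $\lambda|\alpha|>1$; since for every $\lambda>0$ there are multi-indices with $\lambda|\alpha|>1$, both $\mathcal{M}_{\omega_1}$ and $\mathcal{M}_{\omega}$ contain entries outside $\RR_{>0}$ and are therefore not weight matrices in the sense of \eqref{defcalM}. This settles the last assertion.

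For the Beurling identity I would fix $\lambda>0$ and read the seminorm $\|f\|_{\infty,\mathbf{W}^{(\lambda)}}=\sup_{\alpha,\beta}\|x^\alpha\partial^\beta f\|_\infty/W^{(\lambda)}_{\alpha+\beta}$ term by term: the summands with $|\alpha+\beta|>1/\lambda$ have $1/W^{(\lambda)}_{\alpha+\beta}=0$ and impose no condition, while the finitely many summands with $|\alpha+\beta|\le 1/\lambda$ have $W^{(\lambda)}_{\alpha+\beta}$ squeezed between two positive constants. Thus $\|f\|_{\infty,\mathbf{W}^{(\lambda)}}$ is equivalent to $\max_{|\alpha+\beta|\le 1/\lambda}\|x^\alpha\partial^\beta f\|_\infty$, a finite package of Schwartz seminorms; letting $\lambda$ run through $1/N$, $N\in\NN$, the clause "$\forall\lambda\ \exists C_\lambda$" reinstates all the requirements $\|x^\alpha\partial^\beta f\|_\infty<\infty$, so $\mathcal{S}_{(\omega_1)}(\RR^d)=\mathcal{S}(\RR^d)$ as sets and, the two seminorm systems being mutually dominating, as topological vector spaces. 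The identical computation with $\varphi^*_\omega$ in place of $\varphi^*_{\omega_1}$ (the factor $e^{-\frac1\lambda\varphi^*_\omega(\lambda|\alpha+\beta|)}$ still lies between two positive constants for $|\alpha+\beta|\le 1/\lambda$ and vanishes otherwise) yields $\mathcal{S}_{(\omega)}(\RR^d)=\mathcal{S}(\RR^d)$ as well.

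For the Roumieu case I would exploit the opposite monotonicity: the spaces $\mathcal{S}_{\{\mathbf{W}^{(\lambda)}\}}$ increase with $\lambda$, and as soon as $\lambda>1$ every multi-index $\gamma$ with $|\gamma|\ge 1$ satisfies $\lambda|\gamma|>1$, hence $W^{(\lambda)}_\gamma=+\infty$ and the only effective constraint in $\|f\|_{\infty,\mathbf{W}^{(\lambda)}}$ sits at $\alpha=\beta=0$. Consequently $\mathcal{S}_{\{\mathbf{W}^{(\lambda)}\}}=\{f\in C^\infty(\RR^d):\sup_x|f(x)|<\infty\}$ for every $\lambda>1$, and therefore $\mathcal{S}_{\{\omega\}}(\RR^d)=\bigcup_{\lambda>0}\mathcal{S}_{\{\mathbf{W}^{(\lambda)}\}}=\{f\in C^\infty(\RR^d):\sup_x|f(x)|<\infty\}$, and likewise for $\omega_1$. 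Since this space strictly contains $\mathcal{S}(\RR^d)$ — for instance the constant function $1$ belongs to it but not to $\mathcal{S}(\RR^d)$ — the Roumieu analogue of the Beurling identity fails.

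The only slightly delicate point, which I would spell out, is the bookkeeping with the infinite entries $W^{(\lambda)}_\gamma=+\infty$: one fixes the convention that a quotient $\|x^\alpha\partial^\beta f\|_\infty/W^{(\lambda)}_{\alpha+\beta}$ with infinite denominator contributes $0$ to the supremum (such a weight imposing no restriction), checks that this is consistent with both the joint- and the separated-growth-at-infinity versions of the seminorm, and then verifies that for each $\lambda$ the only surviving constraints are those with $|\alpha+\beta|\le 1/\lambda$. Beyond this, the comparison of topologies is immediate once each seminorm has been identified pointwise, so I do not expect any genuine obstacle.
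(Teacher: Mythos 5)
Your proposal is correct and follows essentially the same route as the paper: compute $\varphi^*_{\omega_1}$ explicitly, observe that $W^{(\lambda)}_\alpha=+\infty$ once $\lambda|\alpha|>1$ (so neither matrix has all entries in $\RR_{>0}$), and read off the Beurling identity with $\mathcal{S}(\RR^d)$ from the finitely many surviving seminorms. The only differences are that you treat $\varphi^*_\omega$ directly via the bounds $t<\log(1+e^t)\le t+\log 2$ where the paper instead invokes the equivalence $\omega\sim\omega_1$, and that you make the Roumieu failure explicit by identifying $\mathcal{S}_{\{\omega\}}(\RR^d)$ with the bounded smooth functions -- a point the paper leaves as an assertion -- both of which are welcome additions rather than deviations.
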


\begin{proof}
By normalization we obtain for the Young-conjugate for all $x\ge 0$ (because $\varphi_{\omega_1}(y)=\omega_1(e^y)=0$ for all $-\infty<y\le 0$):
\begin{align*}
\varphi^{*}_{\omega_1}(x)&=\sup_{y\in\RR}\{xy-\varphi_{\omega_1}(y)\}=\sup_{y\ge0}\{xy-\varphi_{\omega_1}(y)\}=\sup_{y\ge0}\{xy-y\}=\sup_{y\ge0}\{y(x-1)\}.
\end{align*}
Consequently, $\varphi^{*}_{\omega_1}(x)=0$ for all $0\le x\le 1$ and $\varphi^{*}_{\omega_1}(x)=+\infty$ for all $x>1$.

This implies for the matrix $\mathcal{M}_{\omega_1}:=\{\mathbf{W}^{(\lambda)}: \lambda>0\}$ that
$$\mathbf{W}^{(\lambda)}_{\alpha}:=\exp\left(\frac{1}{\lambda}\varphi^{*}_{\omega_1}(\lambda|\alpha|)\right)=1,\;\;\;|\alpha|\le\frac{1}{\lambda};\hspace{20pt}\mathbf{W}^{(\lambda)}_{\alpha}=+\infty,\;\;\:|\alpha|>\frac{1}{\lambda}.$$
Hence, for any given index $\lambda>0$ only for finitely many multi-indices $\alpha\in\NN^d_0$ we have that $W^{(\lambda)}_{\alpha}$ is not equal to $+\infty$. This ``length of finiteness'' is increasing when $\lambda\rightarrow 0$, and these small indices are important for the Beurling case. Note that for all $\lambda>1$ we have the situation that $W^{(\lambda)}_{\alpha}=+\infty$ for all $\alpha\in\NN^d_0$ with $|\alpha|\ge 1$. Thus only for $\alpha=0$ we get $W^{(\lambda)}_{\alpha}=1$.\vspace{6pt}

By using the matrix $\mathcal{M}_{\omega_1}$ we can describe the Beurling-type class $\mathcal{S}_{(\omega)}(\RR^d)=\mathcal{S}(\RR^d)$ as a topological vector space, however the Roumieu-type class is not fitting in this framework anymore.\vspace{6pt}

Note that by the equivalence between $\omega$ and $\omega_1$ it is straight-forward to see (e.g. see \cite[Lemma 5.16 $(1)$]{compositionpaper} that, when denoting $\mathcal{M}_{\omega}:=\{\mathbf{V}^{(\lambda)}: \lambda>0\}$, we get:
$$\exists\;H\ge 1\;\forall\;\lambda>0\;\exists\;C\ge 1\;\forall\;\alpha\in\NN^d_0:\;\;\;W^{(\lambda)}_{\alpha}\le C V^{(H\lambda)}_{\alpha}.$$
Hence also for these sequences we obtain the property that only finitely many multi-indices yield a finite value. Note that instead of $\omega$ we could consider any other (equivalent) weight $a\log(1+t)$, $a>0$.
\end{proof}

The next result completes this situation by starting with a sequence. For the proof of this result we refer to \cite[Lemma 7.2]{solidassociatedweight}.

\begin{lemma}\label{appendixlemma2}
Concerning the function $t\mapsto\log(1+t)$ we get:
\begin{itemize}
\item[$(i)$] There does not exist a sequence $\mathbf{M}\in{\mathcal{LC}}$ such that
\begin{equation}\label{logequi}
\omega_{\mathbf{M}}\sim t\mapsto\log(1+t).
\end{equation}
\item[$(ii)$] Let $\mathbf{M}=(M_p)_p$ be a sequence with $1=M_0$ and
$$\exists\;q_0\in\NN_{>0}\;\forall\;p>q_0:\;\;\;M_p=+\infty,$$
and such that $1\le\mu_p\le\mu_{p+1}$ for $1\le p\le q_0$. (So $M_p\in\RR_{>0}$ for only finitely many indices $p$ and $\mu_{q_0+1}=\frac{M_{q_0+1}}{M_{q_0}}=+\infty$). Then $\omega_{\mathbf{M}}$ satisfies \eqref{logequi} by using the conventions $\frac{1}{+\infty}=0$ and $\log(0)=-\infty$.
\end{itemize}
\end{lemma}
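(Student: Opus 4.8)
The final statement to prove is Lemma~\ref{appendixlemma2}, whose proof the authors outsource to \cite[Lemma 7.2]{solidassociatedweight}. Here is the plan for reconstructing it.

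\textbf{Part $(ii)$ first, since it feeds into $(i)$.} The plan is to compute $\omega_{\mathbf{M}}$ directly from the definition \eqref{assofunc}. With the stated conventions $\tfrac{1}{+\infty}=0$ and $\log 0=-\infty$, the supremum $\omega_{\mathbf{M}}(t)=\sup_{p\ge 0}\log\tfrac{t^p}{M_p}$ effectively ranges only over $0\le p\le q_0$, because for $p>q_0$ the term is $\log 0=-\infty$ and contributes nothing. So $\omega_{\mathbf{M}}(t)=\max_{0\le p\le q_0}(p\log t-\log M_p)$, a maximum of finitely many affine functions of $\log t$ with slopes $0=\mu_0$'s contribution up through the largest slope $q_0$. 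For $t\ge 1$ this is a finite nonnegative continuous piecewise-linear function in $\log t$ that behaves like $q_0\log t+O(1)$ as $t\to\infty$; for $0\le t\le 1$ the $p=0$ term dominates and $\omega_{\mathbf{M}}(t)=0$. The comparison with $\log(1+t)$ then follows by noting: $\log(1+t)\le \log 2 + \log t \le C_1(\omega_{\mathbf{M}}(t)+1)$ for $t\ge 1$ (since $\omega_{\mathbf{M}}(t)\ge \mu_1\log t - \log M_1$ grows at least linearly in $\log t$, once $q_0\ge 1$), and conversely $\omega_{\mathbf{M}}(t)\le q_0\log t + C_2 \le C_2' \log(1+t) + C_2'$ for $t\ge 1$. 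On $[0,1]$ both sides are $O(1)$. Hence $\omega_{\mathbf{M}}\sim t\mapsto\log(1+t)$, which is \eqref{logequi}. (One must handle the degenerate possibility $q_0$ minimal, but the hypothesis $\mu_p\ge 1$ and $M_0=1$ keeps everything monotone and nonnegative.)

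\textbf{Part $(i)$: nonexistence in ${\mathcal{LC}}$.} The plan is a proof by contradiction. Suppose $\mathbf{M}\in{\mathcal{LC}}$ with $\omega_{\mathbf{M}}\sim t\mapsto\log(1+t)$. By the recovery formula \eqref{Prop32Komatsu}, $M_p=\sup_{t\ge 0}\tfrac{t^p}{\exp(\omega_{\mathbf{M}}(t))}$. The equivalence $\omega_{\mathbf{M}}\sim \log(1+t)$ gives, for suitable $A,B\ge 1$, the bound $\omega_{\mathbf{M}}(t)\le \log(1+At)+B \le \log t + \log(2A) + B$ for $t\ge 1$, hence $\exp(\omega_{\mathbf{M}}(t))\le C t$ for all $t\ge 1$ with $C:=2Ae^{B}$. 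Therefore, for $p\ge 2$,
\[
M_p \ge \sup_{t\ge 1}\frac{t^p}{Ct} = \frac{1}{C}\sup_{t\ge 1} t^{p-1} = +\infty,
\]
contradicting $\mathbf{M}\in\RR_{>0}^{\NN}$. The only subtlety is justifying $\exp(\omega_{\mathbf{M}}(t))\le Ct$ on all of $[1,\infty)$ rather than just for large $t$; this is routine since $\omega_{\mathbf{M}}$ is continuous and increasing, so the ``$O$'' bound from equivalence, which a priori holds for $t\ge t_0$, can be absorbed into the constant $C$ on the compact piece $[1,t_0]$. I expect this absorption step, together with carefully invoking \eqref{Prop32Komatsu} (which requires $\mathbf{M}\in{\mathcal{LC}}$, exactly our hypothesis), to be the only place needing care.

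\textbf{Main obstacle.} The genuinely delicate point is $(ii)$: one must verify that the finite piecewise-affine function $\log t\mapsto \max_{0\le p\le q_0}(p\log t-\log M_p)$ really is $\sim\log(1+t)$ in \emph{both} directions, i.e.\ that it does not grow too fast (bounded above by $q_0\log t + O(1)$, immediate) \emph{and} not too slowly (bounded below by $c\log t - O(1)$ for some $c>0$, which needs $q_0\ge 1$ so that at least the slope-$\mu_1$ line is present, together with the monotonicity $1\le\mu_1\le\dots\le\mu_{q_0}$ ensuring the top slope actually wins for large $t$). Everything else is a direct unwinding of \eqref{assofunc} and \eqref{Prop32Komatsu} under the stated conventions, so citing \cite[Lemma 7.2]{solidassociatedweight} for the full bookkeeping is appropriate.
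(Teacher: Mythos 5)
The paper itself gives no proof of this lemma---it delegates everything to \cite[Lemma 7.2]{solidassociatedweight}---so there is no in-text argument to compare against; judged on its own merits, your reconstruction is essentially correct. Part $(ii)$ is fine: with the stated conventions the supremum in \eqref{assofunc} collapses to $\max_{0\le p\le q_0}(p\log t-\log M_p)$, which vanishes on $[0,1]$ and is squeezed between $\log t-\log M_1$ and $q_0\log t$ for $t\ge1$ (using $q_0\ge1$ and $M_p\ge1$), giving two-sided $O$-comparison with $\log(1+t)$; your ``$\mu_1\log t-\log M_1$'' should read ``$\log t-\log M_1$'', since the slope of the $p=1$ line is $1$, but this is cosmetic.

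In part $(i)$ there is one imprecision worth fixing: the relation $\omega_{\mathbf{M}}\sim\log(1+\cdot)$, as defined in the paper for weight functions, only yields $\omega_{\mathbf{M}}(t)\le C\log(1+t)$ for $t\ge t_0$, i.e.\ $\exp(\omega_{\mathbf{M}}(t))\le (1+t)^{C}$ (polynomial growth), not the linear bound $\exp(\omega_{\mathbf{M}}(t))\le e^{B}(1+At)$ that you wrote, which is the form of the \emph{sequence} relation $\preceq$, not of the big-$O$ relation between weight functions. The contradiction survives unchanged---via \eqref{Prop32Komatsu} one gets $M_p\ge\sup_{t\ge t_0}t^{p}/(C'(1+t)^{C})=+\infty$ for every $p>C$ rather than for every $p\ge2$---so this is a one-line repair, not a gap. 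Note also that an even shorter route to $(i)$ is available from the paper's own remarks in Section~\ref{oscillating}: for $\mathbf{M}\in{\mathcal{LC}}$ one has $\omega_{\mathbf{M}}(t)\ge p\log t-\log M_p$ for every $p$, hence $\log t=o(\omega_{\mathbf{M}}(t))$ (property $(\gamma)$), which is flatly incompatible with $\omega_{\mathbf{M}}(t)=O(\log(1+t))$; this avoids invoking the recovery formula \eqref{Prop32Komatsu} altogether and does not even use log-convexity, only $(M_p)^{1/p}\to\infty$.
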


The first part means that any weight $a\log(1+t)$, $a>0$, cannot occur in the equivalence class of associated weight functions coming from (standard) weight sequences. However, the second part yields that for ``exotic sequences $\mathbf{M}$'' such that $M_p=+\infty$ for all but finitely many $p$, each weight $a\log(1+t)$, $a>0$, is equivalent to $\omega_{\mathbf{M}}$.

In particular, we can apply $(ii)$ in Lemma \ref{appendixlemma2} to $\mathbf{W}^{(\lambda)}\in\mathcal{M}_{\omega_1}$. Combining this with Lemma \ref{appendixlemma1} we have that also in this exotic setting \cite[Lemma 12]{nuclearglobal2} remains valid, so $\omega_{\mathbf{W}^{(\lambda)}}\sim\omega_1\sim\omega$ for all indices $\lambda>0$.

\vspace*{10mm}
{\bf Acknowledgments.}
Boiti and Oliaro have been partially supported by the INdAM - GNAMPA Projects 2020 ``Analisi microlocale e applicazioni: PDE's stocastiche e di evoluzione, analisi tempo-frequenza, variet\`{a}'' and 2023 ``Analisi di Fourier e Analisi Tempo-Frequenza
di Spazi Fun\-zio\-na\-li ed Ope\-ra\-to\-ri". Boiti is partially supported by the Projects FAR 2021, FAR 2022,
FIRD 2022 and FAR 2023 (University of Ferrara).
Jornet are partially supported by the project PID2020-\-119457GB-\-100 funded by MCIN/AEI/10.13039/501100011033 and by ``ERDF A way of making Europe''.  Schindl is supported by FWF (Austrian Science fund) project 10.55776/P33417.

\end{document}